\newcommand{\stkout}[1]{\ifmmode\text{\sout{\ensuremath{#1}}}\else\sout{#1}\fi}
\definecolor{myblue}{rgb}{0.9,0.9,0.98}
\newcommand*\mybluebox[1]{%
	\colorbox{myblue}{\hspace{1em}#1\hspace{1em}}}
\setlist[enumerate]{leftmargin=10 mm,nosep}
\definecolor{labelkey}{rgb}{0,0.08,0.45}
\definecolor{refkey}{rgb}{0,0.6,0.0}
\definecolor{Brown}{rgb}{0.45,0.0,0.05}
\definecolor{lime}{rgb}{0.00,0.8,0.0}
\definecolor{lblue}{rgb}{0.5,0.5,0.99}
\definecolor{OliveGreen}{rgb}{0,0.6,0}
\definecolor{orange}{RGB}{255, 140, 0}
\colorlet{hlcyan}{cyan!30}
\def\namedlabel#1#2{\begingroup
	\def\@currentlabel{#2}%
	\label{#1}\endgroup
}
\definecolor{mypink}{rgb}{0.87, 0.19, 0.39}
\DeclareMathOperator{\gra}{gr}
\providecommand{\RA}{\Rightarrow}
\providecommand{\LA}{\Leftarrow}
\providecommand{\ball}[2]{\operatorname{ball}(#1;#2)}
\newcommand{\p}{{\bm{p}}}
\newcommand{\q}{{\bm{q}}}
\newcommand*{\tran}{^{\mkern-1.5mu\mathsf{T}}}
\newcommand*{\normM}{_{\mkern-1.5mu{M}}}
\newcommand*{\normX}{_{\mkern-1.5mu{X}}}
\newcommand*{\normY}{_{\mkern-1.5mu{Y}}}
\providecommand{\norm}[1]{\lVert#1\rVert}
\newcommand{{\bB}}{\ensuremath{\mathbf{B}}}
\newcommand{\bzero}{\ensuremath{{\boldsymbol{0}}}}
\newcommand{\minimize}[2]{\ensuremath{\underset{\substack{{#1}}}{\mathrm{minimize}}\;\;#2 }}
\newcommand{\maximize}[2]{\ensuremath{\underset{\substack{{#1}}}{\mathrm{maximize}}\;\;#2 }}
\providecommand{\siff}{\Leftrightarrow}
\newcommand{\knn}{\ensuremath{{k\in{\mathbb N}}}}
\newcommand{\menge}[2]{\big\{{#1}~\big |~{#2}\big\}}
\newcommand{\fenv}[1]%
{\ensuremath{\,\overrightarrow{\operatorname{env}}_{#1}}}
\newcommand{\benv}[1]%
{\ensuremath{\,\overleftarrow{\operatorname{env}}_{#1}}}
\newcommand{\scal}[2]{\left\langle{#1},{#2}  \right\rangle}
\newcommand{\RR}{\ensuremath{\mathbb R}}
\newcommand{\RX}{\ensuremath{\,\left]-\infty,+\infty\right]}}
\providecommand{\ri}{\operatorname{ri}}
\newcommand{\NN}{\ensuremath{\mathbb N}}
\newcommand{\dom}{\ensuremath{\operatorname{dom}}}
\newcommand{\argmin}{\ensuremath{\operatorname{argmin}}}
\newcommand{\prox}{\ensuremath{\operatorname{Prox}}}
\newcommand{\inte}{\ensuremath{\operatorname{int}}}
\newcommand{\ran}{\ensuremath{{\operatorname{ran}}\,}}
\newcommand{\conv}{\ensuremath{\operatorname{conv}\,}}
\newcommand{\spn}{\ensuremath{\operatorname{span}\,}}
\newcommand{\Id}{\ensuremath{\operatorname{Id}}}
\newcommand{\cran}{\ensuremath{\overline{\operatorname{ran}}\,}}
\providecommand{\rras}{\rightrightarrows}
\newcommand{\fix}{\ensuremath{\operatorname{Fix}}}
\providecommand{\fady}{\varnothing}
\crefname{equation}{}{equations}
\crefname{chapter}{Appendix}{chapters}
\crefname{item}{}{items}
\crefname{enumi}{}{}
\crefname{appsec}{Appendix}{Appendices}
\newtheorem{theorem}{Theorem}[section]
\newtheorem{lemma}[theorem]{Lemma}
\newtheorem{corollary}[theorem]{Corollary}
\newtheorem{proposition}[theorem]{Proposition}
\newtheorem{example}[theorem]{Example}
\newtheorem{fact}[theorem]{Fact}
	\newtheorem{remark}[theorem]{Remark}
	\theoremstyle{remark}
	\renewcommand{\d}{{\bm{d}}}
	\renewcommand{\b}{{\bm{b}}}
	\renewcommand{\c}{{\bm{c}}}
    \renewcommand{\a}{{\bm{a}}}
    \renewcommand{\t}{{\bm{t}}}
	\newcommand{\R}{\mathbb{R}}
	\renewcommand{\u}{{\bm{u}}}
	\renewcommand{\v}{{\bm{v}}}
	\newcommand{\w}{{\bm{w}}}
	\newcommand{\x}{{\bm{x}}}
	\newcommand{\y}{{\bm{y}}}
	\newcommand{\z}{{\bm{z}}}
    \renewcommand{\r}{{\bm{r}}}
    \newcommand{\s}{{\bm{s}}}
	\newcommand{\bz}{{\bm{0}}}
	\newcommand{\lin}{{\cal{A}}}
	\providecommand{\vx}{\v_R}
	\providecommand{\vy}{\v_D}
	\newcommand{\be}{\begin{equation}}
		\newcommand{\ee}{\end{equation}}
	\author{Tao Jiang \and Walaa M.~Moursi \and Stephen A.~Vavasis}
\title{Range of the displacement operator of PDHG with applications to quadratic and conic programming}
\begin{document}
		\maketitle
		\begin{abstract}
   Primal-dual hybrid gradient (PDHG) is a first-order method for saddle-point problems and convex programming introduced by Chambolle and Pock.  Recently, Applegate et al.\ analyzed the behavior of PDHG when applied to an infeasible or unbounded instance of linear programming, and in particular, showed that PDHG is able to diagnose these conditions.  
   Their analysis hinges on the notion of the infimal displacement vector in the closure of the range of the displacement mapping of the splitting operator that encodes the PDHG algorithm.
   In this paper, we develop a novel formula for this range using
   monotone operator theory. The analysis is then specialized to conic programming and further to quadratic programming (QP) and second-order cone programming (SOCP).  A consequence of our analysis is that PDHG is able to diagnose infeasible or unbounded instances of QP and of the ellipsoid-separation problem, a subclass of SOCP.
		\end{abstract}
		\noindent
		{\bfseries 2010 Mathematics Subject Classification:}
		{49M27, 
			65K05, 
			65K10, 
			90C25; 
			Secondary 
			47H14, 
			49M29, 
			49N15. 
		}
		
		\noindent {\bfseries Keywords:}
		Chambolle--Pock algorithm,
		convex optimization problem, 
inconsistent constrained optimization,
primal-dual hybrid gradient method,
projection operator,
		proximal mapping,
		second-order cone programming,
		quadratic programming. 
	
		\section{Introduction}
  ``First-order" methods for convex programming use matrix-vector multiplication as their principal operation.
		For huge-scale convex programming problems, first-order methods appear to be the only tractable approach.  In a recent survey, Lu \cite{Lu} found that, among first-order methods, primal-dual hybrid gradient (PDHG) appears to be the best in practice {for linear programming (LP)}.  PDHG was introduced by Chambolle and Pock \cite{CP}.  It has been shown by O’Connor and Vandenberghe \cite{Vdb} that PDHG may be regarded as a particular form of the Douglas-Rachford iteration.
In the following, we assume that
		\begin{empheq}[box=\mybluebox]{equation}
			\label{eq:def:XY}
			\text{$X$ and $Y$ are real Hilbert spaces}
		\end{empheq}
		with corresponding inner products $\scal{\cdot}{\cdot}\normX$ 
		(respectively $\scal{\cdot}{\cdot}\normY$)
		and induced norms $\norm{\cdot}\normX$
		(respectively $\norm{\cdot}\normY$)\footnote{When it is clear from the context, we will drop the subscripts $X$ and $Y$
			associated with the inner products and the norms.}.
   		We also assume that 
		\begin{empheq}[box=\mybluebox]{equation}
			\label{eq:def:fg}
			f:X \to \RX, \quad g:Y \to \RX,
		\end{empheq}
		are convex lower semicontinuous and proper,
		and that 
		\begin{empheq}[box=\mybluebox]{equation}
			\label{eq:def:A}
			\text{$\lin:X\to Y$ is linear and continuous}.
		\end{empheq}

		PDHG is an algorithm for general saddle-point problems of the form 
  \be
  \inf_{\x\in X}\sup_{\y\in Y} f(\x)-g^*(\y)+\langle\y,\lin \x\rangle.
  \label{eq:saddle}
  \ee
  Here, $g^*$ denotes the \emph{Fenchel--Legendre conjugate} of $g$.  
  It should be noted that \cref{eq:saddle} 
  is equivalent to $\inf_{\x\in X} f(\x)+g(\lin \x)$ 
  since the inner sup of \cref{eq:saddle} is exactly the formula for conjugation of $g^*$.  We return to this point in~\cref{sec:PDHGFen}. 
{Recall the \emph{proximal mapping} of $f$ at $\x\in X$ }
 is defined by 
  \[\prox_f(\x)=\argmin_{\z\in X}\left\{f(\z)+\frac{1}{2}\Vert\z-\x\Vert^2\right\},\]
and that the operator form for the proximal mapping is
  \be
  \prox_f\equiv (\Id+\partial f)^{-1}.
  \label{eq:proxop}
  \ee
{Let $(\x_0,\y_0)\in X\times Y$. The PDHG iteration 
  updates $(\x_0,\y_0)$ as follows $(\forall \knn)$:}
  \begin{subequations}
			\begin{align}
				\x_{k+1} &:= \prox_{\sigma f}(\x_k-\sigma \lin^*\y_k), \label{eq:PDHGx}\\
				\y_{k+1} &:= \prox_{\tau g^*}\left(\y_k+\tau \lin(2\x_{k+1}-\x_k)\right).
				\label{eq:PDHGy}
			\end{align}
   \label{eq:PDHGupdates}
		\end{subequations}
  Here, $\sigma,\tau>0$ are step-size parameters that must be chosen correctly---refer to \cref{prop:M}.
  Thus, the main work on each iteration consists of multiplication by $\lin$ and $\lin^*$ and two prox operations.  

{Observe that \cref{eq:PDHGupdates} can be written as
$(\x_{k+1},\y_{k+1}):=T(\x_k,\y_k)$ where:}
\be
T(\x,\y)
  =\left( 
  \begin{array}{c}
  \prox_{\sigma f}(\x-\sigma \lin^*\y) \\
	\prox_{\tau g^*}\left(\y+\tau \lin(2\prox_{\sigma f}(\x-\sigma \lin^*\y)-\x)\right)
 \end{array}
\right).
\label{eq:T}
\ee
Let $(\x^*,\y^*)\in X\times Y$.
Then,  assuming a constraint qualification,
$(\x^*,\y^*)\in\fix T:=\menge{(\x,\y)\in X\times Y}{(\x,\y)=T(\x,\y)}$  or
equivalently
$(\Id-T)(\x^*,\y^*)=\bzero$, 
if and only if $(\x^*,\y^*)$ is a solution to 
\cref{eq:saddle} (see, e.g., \cref{prop:T:op} below).  
On the other hand, if $\bzero\notin\ran(\Id-T)$
then $\norm{(\x_k,\y_k)}\to \infty$ 
(see, e.g., \cite[Corollary~2.2]{BBR78}).  
This motivates the exploration 
of the set $\cran(\Id -T)$ and the corresponding well-defined 
\emph{infimal displacement vector} (see \cref{e:def:v:projm} below).
In passing we point out that 
the study of the range of the displacement mapping
associated with splitting algorithms; namely Douglas--Rachford and forward-backward algorithms,
was a key ingredient 
in exploring the static structure and the dynamic 
behaviour of these methods in the inconsistent case.
In this regard, we refer the reader to \cite{BM16,BM2015,BM22,B21,LRY,Moursithesis,FB,RLY}.

{In the case of PDHG, recently} Applegate et al.\ \cite{Applegate} showed that in the case of inconsistent linear programming (infeasible or unbounded), the infimal displacement vector characterizes the limiting difference between successive PDHG iterates and also certifies the infeasibility or unboundedness.

Our main results can be summarized as follows:
\begin{enumerate}
\item 
We provide a novel formula
 for $\cran (\Id-T)$
 in terms of the domains 
 of the functions $f,g, f^*, g^*$
  and $\lin$ (see \cref{prop:T:prop} below).
  Along the way,  we obtain a formula for the 
  range of the sum of {a skew symmetric operator (of the form \cref{eq:def:S} below) and a maximally monotone operator with} a specific structure (see \cref{thm:ranform:mm} below).
\item 
When specializing\footnote{Let $S\subseteq X$. Here and elsewhere we use
$\iota_S$ to denote the \emph{indicator function} of $S$
defined as: $\iota_S(\x)=0$ if $\x\in S$;
 and $\iota_S(\x)=+\infty$ if $\x\in X\smallsetminus S$.
It is well known that if $S$ is closed, convex, and nonempty, 
then $\iota_S$ is a proper l.s.c.\ convex  function.} 
$g=\iota_K$,
where $K$ is a nonempty closed convex cone of $Y$,
we obtain powerful properties 
for the infimal displacement vector  
in $\cran (\Id-T)$ (see \cref{lem:qp:v:K} below).
\item
We present a comprehensive analysis of the behavior of
PDHG when applied to QP. More specifically, we prove that the infimal displacement
vector $\v$ 
in $\cran(\Id-T)$ provides certificates of 
inconsistency (see \cref{lem:qp:infeasibility} below).
In \cref{t:QPconvergence} below
we prove that the sequence 
$((\x_k,\y_k)+k\v)_\knn$ converges as $k\rightarrow\infty$.

\item 
We analyze the ellipsoid separation problem, another instance of conic programming. For this problem, we also derive a
convergence result (see \cref{t:ellips} below), and we establish again that PDHG in the inconsistent case returns a certificate (see \cref{thm:esep:cert} below).
\end{enumerate}




{\bf Organization.} 
The rest of this paper is organized as follows.
In \cref{Appp:1} we present a formula for the range of the sum of 
two maximally monotone operators  that have 
particular structures.
In \cref{sec:PDHGrangeIdminusT} we develop formulas for $\Id-T$, the displacement operator of PDHG, and its range.
  In \cref{sec:conicpdhg} and the following sections, 
  we consider the several specializations of PDHG to convex optimization problems.
  \cref{sec:qpstatic} presents
our analysis of $\ran(\Id-T)$ and the infimal displacement vector in the case of QP. 
Furthermore, when the problem is inconsistent, $\v$ is nonzero and certifies inconsistency. 
We provide computational experiments that illustrate our conclusions.
Another special case of the general conic programming problem is 
presented 
in \cref{sec:standconic}. An application of this setting to the 
ellipsoid separation problem is detailed.
We also prove that PDHG can diagnose infeasible instances of the ellipsoid separation problem.

		\section{On the range of the sum of monotone operators}
		\label{Appp:1}
Let $B\colon X\rras X$. Recall that 
  $B$ is \emph{monotone} if  $(\forall (\x,\u)\in \gra B)$ $(\forall (\y,\v)\in \gra B)$ 
  $\scal{\x-\y}{\u-\v}\ge 0$ and $B$ is \emph{maximally monotone} if it is monotone and its
  graph does not admit any proper extension (in terms of set inclusion).
		In this section, we derive a formula for the range of the sum of two maximally monotone operators of the form \cref{e:mm:sumSB} below. This will play a critical role in our 
		analysis later.
		In the following, we assume that 
		\begin{empheq}[box=\mybluebox]{equation}
			\text{$B_1\colon X\rras X$ and $B_2\colon Y\rras Y$ are maximally monotone.}
		\end{empheq}
  
For the remainder of the paper, we set
  		\begin{empheq}[box=\mybluebox]{equation}
			\label{eq:def:S}
			S:=
			\begin{pmatrix}
				0&\lin^*\\
				-\lin&0
			\end{pmatrix}.
		\end{empheq}

		We now define the maximally monotone operator (see, e.g., \cite[Proposition~20.23]{BC2017})
		\begin{empheq}[box=\mybluebox]{equation}
			\label{eq:def:bB}
			{\bf B}\colon X\times Y\rras X\times Y\colon
			(\x,\y) \mapsto B_1\x\times B^{-1}_2\y.
		\end{empheq}
		Observe that by, e.g., \cite[Proposition~2.7(i)--(iii)]{LBC11} we
     have $\bB$ is maximally monotone,
		$S\colon X\times Y\to X\times Y$ is maximally monotone 
        with $S^*=-S$ and 
		\begin{equation}
			\label{e:mm:sumSB}
			\text{${\bB}+S$ is maximally monotone.}
		\end{equation}	

		Let $C\colon X\rras X$ be monotone. Recall that $C$ is
		$3^*$ monotone if  $(\forall (\s,\r) \in \dom C\times \ran C)$
		\begin{equation}
			\label{e:3*mono:mm}
			\inf_{(\u,\v)\in \gra C}\;\;
			\scal{\u-\s}{\v-\r}>-\infty.
		\end{equation}
		\begin{fact}
			\label{fact:subd3*}
			$\partial f$ is $3^*$ monotone.	
		\end{fact}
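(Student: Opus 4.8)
The plan is to derive the inequality \cref{e:3*mono:mm} directly from the Fenchel--Young (in)equality; no monotone-operator machinery beyond the definition is needed. First I would record the two domain facts that ultimately make the bound finite: since $f$ is proper, lower semicontinuous and convex, $(\partial f)^{-1}=\partial f^*$, so $\ran\partial f=\dom\partial f^*\subseteq\dom f^*$, while trivially $\dom\partial f\subseteq\dom f$. Hence, for any admissible pair $(\s,\r)\in\dom\partial f\times\ran\partial f$, both $f(\s)$ and $f^*(\r)$ are finite.

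Next, fixing such $(\s,\r)$ and taking an arbitrary $(\u,\v)\in\gra\partial f$, I would combine the Fenchel--Young \emph{equality} $f(\u)+f^*(\v)=\scal{\u}{\v}$ (valid because $\v\in\partial f(\u)$) with the Fenchel--Young \emph{inequalities} $\scal{\s}{\v}\le f(\s)+f^*(\v)$ and $\scal{\u}{\r}\le f(\u)+f^*(\r)$. Expanding the inner product then gives
\begin{align*}
\scal{\u-\s}{\v-\r}
&=\scal{\u}{\v}-\scal{\u}{\r}-\scal{\s}{\v}+\scal{\s}{\r}\\
&=f(\u)+f^*(\v)-\scal{\u}{\r}-\scal{\s}{\v}+\scal{\s}{\r}\\
&=\big(f(\u)-\scal{\u}{\r}\big)+\big(f^*(\v)-\scal{\s}{\v}\big)+\scal{\s}{\r}\\
&\ge -f^*(\r)-f(\s)+\scal{\s}{\r}.
\end{align*}
The right-hand side is a finite constant independent of $(\u,\v)$, so taking the infimum over $(\u,\v)\in\gra\partial f$ yields $\inf_{(\u,\v)\in\gra\partial f}\scal{\u-\s}{\v-\r}\ge\scal{\s}{\r}-f(\s)-f^*(\r)>-\infty$, which is exactly what \cref{e:3*mono:mm} requires.

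I do not anticipate a real obstacle here: the argument is essentially a one-line manipulation of Fenchel--Young, and the only external ingredient is the standard identity $(\partial f)^{-1}=\partial f^*$ (equivalently $\ran\partial f\subseteq\dom f^*$), which holds precisely because $f$ is proper, l.s.c.\ and convex; see, e.g., \cite{BC2017}. Alternatively, one could simply invoke the known fact that subdifferentials are rectangular (``$3^*$'') monotone operators in the sense of Brezis--Haraux, but the self-contained computation above is just as short and keeps the section closed under its own hypotheses.
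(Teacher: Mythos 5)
Your proof is correct: the Fenchel--Young computation is valid (all quantities involved are finite because $\s\in\dom\partial f\subseteq\dom f$ and $\r\in\ran\partial f=\dom\partial f^*\subseteq\dom f^*$ with $f$ and $f^*$ proper), and the resulting lower bound $\scal{\s}{\r}-f(\s)-f^*(\r)$ is independent of $(\u,\v)$. The paper simply cites \cite[Example~25.13]{BC2017} for this fact, and your argument is exactly the standard proof underlying that reference, so there is no substantive difference in approach.
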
	
		\begin{proof}
			See, e.g., \cite[Example~25.13]{BC2017}. 
		\end{proof}	
  \begin{lemma}
  \label{lem:Snot3*}
Suppose that $\lin\neq 0$.
Then $S$ is \emph{not} $3^*$ monotone.
\end{lemma}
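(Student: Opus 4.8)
The plan is to exploit the skew-symmetry of $S$ directly against the defining condition \cref{e:3*mono:mm}. Since $S$ is a bounded linear operator, $\dom S = X\times Y$ and $\gra S = \menge{(\z,S\z)}{\z\in X\times Y}$, and since $S^*=-S$ we have $\scal{\z}{S\z}=0$ for every $\z\in X\times Y$. The key computation is then the following: for any $(\s,\r)\in\dom S\times\ran S$ and any $\u\in X\times Y$, expanding $\scal{\u-\s}{S\u-\r}$ and applying $S^*=-S$ to the cross terms gives
\[
\scal{\u-\s}{S\u-\r}
=\scal{\u}{S\u}-\scal{\u}{\r}-\scal{\s}{S\u}+\scal{\s}{\r}
=\scal{\u}{S\s-\r}+\scal{\s}{\r},
\]
which is affine in $\u$.

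Next I would build a witness forcing the infimum in \cref{e:3*mono:mm} to be $-\infty$. Because $\lin\neq 0$, there exists $\x_0\in X$ with $\lin\x_0\neq\bzero$; set $\s:=\bzero\in X\times Y$ and $\r:=S(\x_0,\bzero)=(\bzero,-\lin\x_0)$, so that $\r\in\ran S$ and $\r\neq\bzero$. For this choice the identity above collapses to $\scal{\u-\s}{S\u-\r}=-\scal{\u}{\r}$, and taking $\u:=t\r$ with $t\to+\infty$ yields $-t\norm{\r}^2\to-\infty$. Hence $\inf_{(\u,\v)\in\gra S}\scal{\u-\s}{\v-\r}=-\infty$ at this $(\s,\r)\in\dom S\times\ran S$, so $S$ fails the defining inequality of $3^*$ monotonicity.

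I do not expect a real obstacle here: the only care needed is the sign bookkeeping when passing $S$ across the inner product via $S^*=-S$, and checking that the chosen $\r$ genuinely lies in $\ran S$ and is nonzero — and it is precisely the hypothesis $\lin\neq 0$ that guarantees $\ran S\neq\{\bzero\}$, hence the existence of such an $\r$. (Symmetrically one could take $\r:=\bzero$ and $\s$ any point with $S\s\neq\bzero$, but the choice $\s=\bzero$ keeps the computation shortest.)
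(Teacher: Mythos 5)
Your proof is correct. The paper itself does not give a computation here: it disposes of the lemma by citing two external results (a characterization of $3^*$ monotonicity for linear operators from \cite{BC2017} together with an example from \cite{BBBRW07}), whereas you prove the failure from scratch. Your key identity $\scal{\u-\s}{S\u-\r}=\scal{\u}{S\s-\r}+\scal{\s}{\r}$ is right (the cross terms are handled correctly via $S^*=-S$, and $\scal{\u}{S\u}=0$ kills the quadratic term), the witness $\s=\bzero$, $\r=S(\x_0,\bzero)=(\bzero,-\lin\x_0)\neq\bzero$ does lie in $\dom S\times\ran S$, and $\u=t\r$ drives the affine expression to $-\infty$. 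What your route buys is a self-contained, two-line verification that in fact shows the stronger statement that \emph{any} nonzero skew-adjoint bounded linear operator fails to be $3^*$ monotone; what the paper's route buys is brevity and a pointer to the general theory (rectangularity of linear monotone operators) in which this observation sits. Either is acceptable; yours is arguably preferable for a reader who does not want to chase references.
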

\begin{proof}
Combine \cite[Proposition~25.12]{BC2017}
 and 
 \cite[Example~4.5]{BBBRW07}.
\end{proof}
		
		The following result provides a formula for the closure of the range of
		${\bf B}+S$.
		\begin{theorem}
			\label{thm:ranform:mm}
			Suppose that $B_1$ and $B_2$ are $3^*$ monotone.
			Then 
			\begin{equation}
				\label{e:ran:form}
				\cran{({\bf B}+S)}
				=\overline{(\ran B_1+\lin^* (\ran B_2))} \times 
				\overline{(\dom B_2 -\lin (\dom B_1))}.
			\end{equation}
		\end{theorem}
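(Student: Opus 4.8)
The plan is to prove the two set-identities for the components of $\cran({\bB}+S)$ by a double inclusion, treating the ``easy'' inclusion $\subseteq$ first by a direct computation and the ``hard'' inclusion $\supseteq$ by a range-additivity / Brezis--Haraux type argument. First I would establish $\subseteq$. Since $\gra({\bB}+S)$ consists of pairs $((\x,\y),(\u+\lin^*\y',\v'-\lin\x))$ with $\u\in B_1\x$, $(\v',\y')\in\gra B_2$ so that $\y\in B_2^{-1}\v'$ means $\v'\in B_2\y$ and $\y'=\y$ after a suitable relabeling --- wait, more carefully: an element of $\ran({\bB}+S)$ has the form $(\u+\lin^*\y,\,\w-\lin\x)$ where $\u\in B_1\x$, $\w\in B_2^{-1}\y$, i.e. $\y\in B_2\w$. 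Hence the first coordinate lies in $\ran B_1+\lin^*(\dom B_2)$ --- no: $\y\in\dom B_2^{-1}=\ran B_2$, so the first coordinate lies in $\ran B_1+\lin^*(\ran B_2)$, and the second lies in $\dom B_2^{-1}-\lin(\dom B_1)=\ran B_2$ --- hmm, $\dom B_2^{-1}=\ran B_2$, and $\w\in\dom B_2^{-1}$ means $\w\in\ran B_2$; but the claimed set is $\dom B_2-\lin(\dom B_1)$. The resolution is that $\ran({\bB})=\ran B_1\times\ran B_2^{-1}=\ran B_1\times\dom B_2$, so indeed $\ran({\bB}+S)\subseteq(\ran B_1+\lin^*(\ran B_2))\times(\dom B_2-\lin(\dom B_1))$ once one checks that $\dom B_2^{-1}\times$-components align; taking closures gives $\subseteq$ in \cref{e:ran:form}.

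For the reverse inclusion $\supseteq$ I would invoke the Brezis--Haraux theorem: if $C_1,C_2$ are monotone, $C_1+C_2$ is maximally monotone, and $C_1,C_2$ are $3^*$ monotone (or one of them is $3^*$ monotone and the other has full domain), then $\cran(C_1+C_2)=\overline{\ran C_1+\ran C_2}$ and $\inte\ran(C_1+C_2)=\inte(\ran C_1+\ran C_2)$. The natural split here is $C_1={\bB}$ and $C_2=S$, but \cref{lem:Snot3*} says $S$ is \emph{not} $3^*$ monotone when $\lin\neq0$, so Brezis--Haraux does not apply to this split directly. The fix is to split differently: write ${\bB}+S$ and observe that $S$ itself, being skew with $\dom S=X\times Y$ (full domain) and single-valued, can play the role of the operator with full domain in the variant of Brezis--Haraux that only requires \emph{one} summand to be $3^*$ monotone. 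Concretely, ${\bB}$ is $3^*$ monotone (a product of $3^*$ monotone operators is $3^*$ monotone --- this needs a one-line check from the definition \cref{e:3*mono:mm}, since the infimum over $\gra({\bB})=\gra B_1\times\gra B_2^{-1}$ separates, and $B_2$ being $3^*$ monotone is equivalent to $B_2^{-1}$ being $3^*$ monotone), and $S$ has full domain and is continuous linear. Then the appropriate version of Brezis--Haraux (e.g. the form in \cite[Theorem~24.20]{BC2017} or \cite{BH76}) yields $\cran({\bB}+S)=\overline{\ran{\bB}+\ran S}$. The last step is to compute $\ran S=\ran\lin^*\times\ran(-\lin)=\ran\lin^*\times\ran\lin$ and $\ran{\bB}=\ran B_1\times\dom B_2$, add componentwise, and absorb $\ran\lin^*$ into $\overline{\ran B_1+\lin^*(\ran B_2)}$ and $\ran\lin$ into $\overline{\dom B_2-\lin(\dom B_1)}$; here one uses that $\ran\lin^*\subseteq\overline{\lin^*(\ran B_2-\ran B_2)}$-type reasoning, or more simply that $\ran\lin^*+\lin^*(\ran B_2)$ and $\lin^*(\ran B_2)$ have the same closure is \emph{false} in general --- so instead one argues that $\ran B_1+\ran\lin^*+\lin^*(\ran B_2)$ already equals $\ran B_1+\lin^*(\ran B_2+\ran\lin^*)$ --- wait, that is circular. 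The cleaner route: note $\ran{\bB}+\ran S=(\ran B_1+\lin^*(Y))\times(\dom B_2+\lin(X))$ is \emph{too big}; so I must \emph{not} split off all of $S$.

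Hence the main obstacle, and the place where real work is needed, is choosing the split so that Brezis--Haraux applies \emph{and} gives exactly the set in \cref{e:ran:form} rather than something coarser. The honest approach is: do \emph{not} peel off $S$; instead, apply a range formula tailored to the structure. I would proceed by the substitution/change-of-variables that reduces ${\bB}+S$ to a sum of two operators each of which \emph{is} $3^*$ monotone. Specifically, consider on $X\times Y$ the operator ${\bB}$ and the operator $\widetilde S(\x,\y)=(\lin^*\y,-\lin\x)$; alternatively, use the primal-dual reformulation in which $(\x,\y)\mapsto B_1\x\times B_2^{-1}\y$ is composed with the linear map. The decisive lemma will be: for $3^*$ monotone ${\bB}$ and skew-adjoint bounded $S$ with ${\bB}+S$ maximally monotone, $\ran({\bB}+S)$ has the same closure as $\ran{\bB}+\ran S$ \emph{intersected appropriately}, i.e. one shows $\overline{\ran({\bB}+S)}=\overline{\{(\u+\lin^*\y,\v-\lin\x):\u\in B_1\x,\v\in B_2\y\ \text{(}y\in\dom B_2\text{)}\}}$ and then, using that $\partial\iota$-type or general $3^*$ monotone operators have ranges that ``fill out'' the relevant affine hulls, identifies this with the right-hand side of \cref{e:ran:form}. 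In practice the paper almost certainly invokes a known Brezis--Haraux variant that needs only one $3^*$-monotone summand together with a domain/range qualification, applied to the pair $({\bB},S)$ after reinterpreting $S$'s contribution coordinatewise; I expect the bulk of the proof to be the verification that ${\bB}$ is $3^*$ monotone (routine, from \cref{e:3*mono:mm}) and the careful bookkeeping of ranges and domains under $\lin$ and $\lin^*$ to land precisely on $\overline{(\ran B_1+\lin^*(\ran B_2))}\times\overline{(\dom B_2-\lin(\dom B_1))}$, with the maximal monotonicity \cref{e:mm:sumSB} supplying the needed regularity to pass between range and its closure.
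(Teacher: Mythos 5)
Your proof has a genuine gap: the hard inclusion $\overline{(\ran B_1+\lin^*(\ran B_2))}\times\overline{(\dom B_2-\lin(\dom B_1))}\subseteq\cran({\bB}+S)$ is never actually proved. You correctly diagnose the obstacles --- $S$ is not $3^*$ monotone, and peeling off all of $S$ via Brezis--Haraux would produce the strictly larger set $\overline{\ran{\bB}+\ran S}$ --- but then you end by conjecturing that ``the paper almost certainly invokes a known Brezis--Haraux variant'' without exhibiting a split or a theorem that works. No such off-the-shelf invocation can work here: the paper's own \cref{ex:notBHfriend} shows that $\cran({\bB}+S)$ can be a proper subset of $\overline{\ran{\bB}+\ran S}$, so any variant of Brezis--Haraux applied to the pair $({\bB},S)$ (including the one-sided version with $\dom S=X\times Y$) gives the wrong answer. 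The ``substitution/change-of-variables that reduces ${\bB}+S$ to a sum of two $3^*$ monotone operators'' is asserted but never constructed, and it is precisely the content of the theorem.

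What the paper actually does is a Brezis--Haraux-\emph{style} argument run from scratch on the structured operator. Given $\r=(\x+\lin^*\w,\y-\lin\z)$ in the right-hand side, Minty's theorem applied to the maximally monotone ${\bB}+S$ produces, for each $n$, a point $\x_n$ with $\r-\tfrac{1}{n^2}\x_n\in({\bB}+S)(\x_n)\subseteq\ran({\bB}+S)$. The decisive computation --- and the idea missing from your proposal --- is that for any $(\u,\v)\in\gra({\bB}+S)$ and the reference point $\s=(\z,\w)$, the skew contributions cancel exactly:
\begin{equation*}
\scal{\u-\s}{\v-\r}=\scal{\a-\z}{\a^*-\x}+\scal{\b^*-\w}{\b-\y},
\end{equation*}
where $(\a,\a^*)\in\gra B_1$ and $(\b,\b^*)\in\gra B_2$. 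This lets the $3^*$ monotonicity of $B_1$ and $B_2$ be applied \emph{separately} (rather than requiring $3^*$ monotonicity of ${\bB}+S$ or of $S$) to get a uniform lower bound $K$, which in turn bounds $(\x_n/n)_{n\ge1}$, so that $\r-\tfrac{1}{n^2}\x_n\to\r$ and $\r\in\cran({\bB}+S)$. Your easy inclusion is essentially the paper's, and your observation that a product of $3^*$ monotone operators is $3^*$ monotone is correct but not what is needed; without the Minty regularization and the cancellation identity, the proof does not close.
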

		\begin{proof}
			For simplicity, {we} set 
			$R:=(\ran B_1+\lin^* (\ran B_2)) \times 
			(\dom B_2 -\lin (\dom B_1))$.
			Let $(\u,\v)\in \ran ({\bf B}+S)$.
			Then 
			$(\exists (\x,\y)\in \dom {\bf B}
			=\dom B_1\times \dom B_2^{-1}=\dom B_1\times \ran B_2)$
			such that
			$
			(\u,\v)\in (B_1\x+\lin^*\y)\times (B_2^{-1}\y-\lin\x) 
			\subseteq (\ran B_1 +\lin^*(\ran B_2))
			\times (\dom B_2-\lin (\dom B_1))$.
			This proves that 
			\begin{equation}
				\label{e:forinc:mm}
				\text{$\ran({\bf B}+S)\subseteq R$ and
					hence $\cran{({\bf B}+S)}\subseteq\overline{R}$.}
			\end{equation}
   We now turn to the opposite inclusion. 
			Let $\r\in R$. Then there exists 
			{$\x\in \ran B_1$, $\y\in \dom B_2$,}
			$\w\in \ran B_2$
			and $\z\in \dom B_1$ such that
				$\r=(\x+\lin^*\w,\y-\lin\z).$
			Recalling \cref{e:mm:sumSB} and 
			Minty's theorem (see, e.g., 
			\cite[Theorem~21.1]{BC2017}), we learn that 
			$(\forall n\ge 1)$
			$\tfrac{1}{n^2}\Id+{\bf B}+S$ is surjective.
			Consequently, $(\forall n\ge 1)$ there exists $\x_n\in \dom \bf B$
			such that  
   $	\r\in \left(\tfrac{1}{n^2}\Id+{\bB}+S\right)(\x_n) =\tfrac{1}{n^2}\x_n +{\bB} \x_n+S\x_n.$	
			Equivalently, 
			\begin{equation}
				\label{e:seqloc:mm}
				\text{the sequence $\Big(\x_n,\r-\tfrac{1}{n^2}\x_n\Big)_{n\ge 1}$
					lies in  $\gra ({\bB}+S)$.
				}
			\end{equation}
			This in turn implies that 
			\begin{equation}
				\label{e:rloc:mm}
				\text{the sequence $\Big(\r-\tfrac{1}{n^2}\x_n\Big)_{n\ge 1}$
					lies in  $\ran({\bB}+S)$.
				}
			\end{equation}
			We will show that $(\x_n/n)_{n\ge 1}$
			is bounded.
			We set $\s=(\z,\w)$. We  claim that
			there exists $K\in \RR$
			such that 
			\begin{equation}
				\label{e:away:mm}
				\inf_{(\u,\v)\in \gra ({\bB}+S)}   
				\quad\scal{\u-\s}{\v-\r}\ge K.
			\end{equation}
			Indeed, let $(\u,\v)\in \gra ({\bB}+S)$.
			Then $(\exists (\a,\a^*)\in \gra B_1)$
			$(\exists (\b^*,\b)\in \gra B_2^{-1})$
			such that 
				\text{$\u=(\a,\b^*)$
					and $\v=(\a^*+\lin^*\b^*,\b-\lin \a)$.}
			Now,
			\begin{subequations}
				\begin{align}
					&\qquad\scal{\u-\s}{\v-\r}
					\nonumber
					\\
					&=\scal{(\a,\b^*)-(\z,\w)}{(\a^*+\lin^*\b^*,\b-\lin \a)-(\x+\lin^*\w,\y-\lin \z)}
					\\
					&=\scal{(\a-\z,\b^*-\w)}{(\a^*+\lin^*\b^*-(\x+\lin^*\w),\b-\lin \a-(\y-\lin \z)}
					\\
					&=\scal{\a -\z}{\a^*+\lin^*\b^*-(\x+\lin^*\w)}
					+\scal{\b^*-\w}{\b-\lin \a-(\y-\lin \z)}
					\\
					&=\scal{\a -\z}{\a^*-\x}+\scal{\a -\z}{\lin^*(\b^*-\w)}
					+\scal{\b^*-\w}{\b-\y}-\scal{\b^*-\w}{\lin(\a-\z)}
					\\
					&=\scal{\a -\z}{\a^*-\x}+\scal{\b^*-\w}{\b-\y}+\scal{\b^*-\w}{\lin (\a-\z)}
					-\scal{\b^*-\w}{\lin (\a-\z)}
					\\
					&=\scal{\a -\z}{\a^*-\x}+\scal{\b^*-\w}{\b-\y}\ge K,
					\label{ine:M:mm}
				\end{align}
			\end{subequations}
			for some $K\in \RR$.
			The inequality in \cref{ine:M:mm}
			follows from applying  \cref{e:3*mono:mm} with $C$ replaced by $B_1$ by noting that
			$(\a,\a^*)\in \gra B_1$
			and again applying \cref{e:3*mono:mm} with 
   $X$ replaced by $Y$, and 
   $C$ replaced by $B_2$ by noting that $(\b,\b^*)\in \gra B_2$.
			This proves \cref{e:away:mm}.
			Now \cref{e:seqloc:mm} and \cref{e:away:mm} imply that
			$(\forall n\ge 1)$
			\begin{subequations}
				\begin{align}
					\tfrac{\norm{\x_n}^2}{n^2}
					&=\tfrac{1}{n^2}(\norm{\x_n}^2-\norm{\s}^2)+\tfrac{\norm{\s}^2}{n^2}
					\\
					&\le \tfrac{1}{n^2}(\norm{\x_n}^2-\norm{\s}^2)+\norm{\s}^2
					\\
					&= \tfrac{1}{n^2}(2\norm{\x_n}^2-(\norm{\x_n}^2+\norm{\s}^2))+\norm{\s}^2
					\\
					&\le \tfrac{1}{n^2}(2\norm{\x_n}^2-2\scal{\s}{\x_n})+\norm{\s}^2
					\\
					&=-2\scal{\x_n-\s}{-\tfrac{1}{n^2}\x_n}+\norm{\s}^2
					\\
					& =-2\scal{\x_n-\s}{\r-\tfrac{1}{n^2}\x_n-\r}+\norm{\s}^2
                    \\
                    &\le -2K+\norm{\s}^2.
					\label{se:bdd:mm}
				\end{align}
			\end{subequations}
   	That is, $(\x_n/n)_{n\ge 1}$
			is bounded as claimed.
			Taking the limit in \cref{e:rloc:mm} 
   as $n\to \infty$
   we learn that 
			$\r\in  \cran(\bB+S)$ and hence
			$R\subseteq  \cran(\bB+S)$.
			The proof is complete.
		\end{proof}
		
		\begin{remark}
			Suppose that $B_1$ and $B_2$ are $3^*$ monotone. Then ${\bB}$ is $3^*$ monotone.
			Some comments are in order.
			\begin{enumerate}
				\item 
				Suppose that $\dom {\bB}=X\times Y$.
				This is equivalent to $\dom B_1=X$ and $\ran B_2=\dom B_2^{-1}=Y $.
				The formula in \cref{e:ran:form} boils down to
				
				\begin{equation}
					\label{e:ran:formBH}
					\cran{({\bf B}+S)}
					=\overline{(\ran B_1+\ran \lin^*)} \times 
					\overline{(\dom B_2 -\ran \lin )}=\overline{\ran {\bB}+\ran S}.
				\end{equation}
				The above formula is alternatively obtained using the celebrated Brezis--Haraux theorem,
				see, e.g., \cite[Theorem~25.24(ii)]{BC2017}.
				
				\item The assumption that 
				$\dom {\bB}=X\times Y$ is critical to prove  that  the formula in \cref{e:ran:form} 
				reduces to the formula in 
				\cref{e:ran:formBH} as we illustrate in 
				\cref{ex:notBHfriend} below. 
				
				\item 
				The assumption that both $B_1$ and $B_2$ are $3^*$ monotone
				is critical to obtain the conclusion of \cref{thm:ranform:mm} 
				as we illustrate in \cref{ex:both3*} below.
			\end{enumerate}		
		\end{remark}	
		
		\begin{example}
			\label{ex:notBHfriend}
			Suppose that $X=Y=\RR$, that $\lin=-\Id$,
			that\footnote{Let $C$
   be a nonempty closed convex subset of $X$.
   Here and elsewhere we use $N_C$ to denote 
   the \emph{normal cone operator}
   associated with $C$.}
   $B_1=N_{\left[0,+\infty\right[}$
			and that $B_2=N_{\left]-\infty,0\right]}$.
			Then 
			\begin{equation}
				\cran({\bB}+S)=	\left]-\infty,0\right]  \times \RR \subsetneqq \RR^2 =\overline{\ran {\bB}+\ran S}.
			\end{equation}
		\end{example}

		\begin{proof}
			Observe that $B_1$ and $B_2$ are subdifferential operators, hence $3^*$ monotone 
			by \cref{fact:subd3*}. 
			Moreover,
				\text{ ${\bB}=N_{\RR^2_+}$ and $S=\begin{psmallmatrix}
						0&-1
						\\
						1&0
					\end{psmallmatrix}	$}	
			On the one hand, clearly
				$\ran {\bB}=\RR^2_{-}$ and $\ran S=\RR^2$\; hence $\ran {\bB}+\ran S=\RR^2=\overline{\ran {\bB}+\ran S}$.
			On the other hand,
				$\dom B_1=\ran B_2=\left[0,+\infty\right[$ and  $\dom B_2=\ran B_1=\left]-\infty,0\right]$,
			and \cref{e:ran:form} yields
			$\cran({\bB}+S)=	\left]-\infty,0\right]  \times \RR \subsetneqq \RR^2 =\overline{\ran {\bB}+\ran S}$,
			as claimed.	
		\end{proof}

		\begin{example}
			\label{ex:both3*}
			Suppose that $X=\RR$, that $Y=\RR^2$, that 
			$B_1\equiv 0$, that 
			$B_2=\begin{psmallmatrix}
				0&-1
				\\
				1&0
			\end{psmallmatrix}$,
			and that $\lin=\begin{psmallmatrix}
				1
				\\
				0
			\end{psmallmatrix}$.
			Then 
			the following hold:
			\begin{enumerate}
				\item 
				\label{ex:both3*:i}
				$\cran({\bB}+S)=\spn\{(0,1,0)\tran, (-1,0,1)\tran\}  \subsetneqq \RR^3$.
				\item 	
				\label{ex:both3*:ii}
				$\overline{(\ran B_1+\lin^* (\ran B_2))} \times 
				\overline{(\dom B_2 -\lin (\dom B_1))}=\RR^3$.
				\item 
				\label{ex:both3*:iii}
				$	\cran({\bB}+S) \subsetneqq \overline{(\ran B_1+\lin^* (\ran B_2))} \times 
				\overline{(\dom B_2 -\lin (\dom B_1))}.$
			\end{enumerate}
		\end{example}	
		\begin{proof}
			It follows from \cref{lem:Snot3*} that
			$B_2$ is \emph{not} $3^*$ monotone. 
   Moreover, it is easy to verify that
$B_2^{-1}=-B_2$,
			that $\dom B_1=\RR$, 
			that $\ran B_1=\{0\}$,
			and that $\dom B_2=\ran B_2=\RR^2$.
			\cref{ex:both3*:i}: We have 
			\begin{equation}
				{\bB}=
				\begin{pmatrix}
					0&0&0
					\\
					0&0&1
					\\
					0&-1&0
				\end{pmatrix}		
				\;
				\text{and}
				\;
				S=
				\begin{pmatrix}
					0&1&0
					\\
					-1&0&0
					\\
					0&0&0
				\end{pmatrix}		
				\; \text{hence}\;
				{\bB}+S=
				\begin{pmatrix}
					0&1&0
					\\
					-1&0&1	\\
					0&-1&0
				\end{pmatrix}.
			\end{equation}	
			This proves \cref{ex:both3*:i}.
			\cref{ex:both3*:ii}:
			Indeed, we have 
			$\ran B_1+\lin^* (\ran B_2)=\{0\}+\lin^*(\RR^2)=\RR$
			and 
			$\dom B_2 -\lin (\dom B_1)=\RR^2$.
			\cref{ex:both3*:iii}:
			Combine \cref{ex:both3*:i} and \cref{ex:both3*:ii}.
		\end{proof}

		\section{The PDHG splitting operator, the range of its displacement map, and the infimal displacement vector}
  \label{sec:PDHGrangeIdminusT}
	In this section, we apply the theory from \cref{Appp:1} 
 to develop formulas $\Id-T$, the displacement operator of PDHG, and its range.  These formulas appear in \cref{prop:T:prop}.  Then in \cref{subsec:infimal} we define the infimal displacement vector, which lies in the closure of this range, and state and prove some of its important properties.  
Let $\sigma>0$ and let $\tau>0$. For the remainder of the paper, we set
 \begin{empheq}[box=\mybluebox]{equation}
			\label{eq:def:M}
			M:=
			\begin{pmatrix}
				\tfrac{1}{\sigma}\Id\normX&-\lin^*\\
				-\lin&\tfrac{1}{\tau}\Id\normY
			\end{pmatrix},
		\end{empheq}
		and we set 
		\begin{empheq}[box=\mybluebox]{equation}
			\label{eq:def:F}
			F\colon 
			X\times Y\to \RX
   \colon (\x,\y)\mapsto f(\x)+g^*(\y).
		\end{empheq}
		Then (see, e.g., \cite[Proposition~16.9]{BC2017})
		\begin{equation}
			\partial F(\x,\y)=\partial f(\x) \times \partial g^*(\y).	
		\end{equation}	
 
\subsection{PDHG and Fenchel--Rockafellar duality}
\label{sec:PDHGFen} 
 Consider the primal problem
		\begin{equation}
			\label{e:FRp}
			\minimize{\x\in X}\quad f(\x)+g(\lin \x),	
		\end{equation}	
  which is equivalent to \cref{eq:saddle},
		and its Fenchel--Rockafellar dual
		\begin{equation}
			\label{e:FRd}
			\minimize{\y\in Y}\quad f^*(-\lin^* \y)+g^*(\y).	
		\end{equation}

  Under appropriate constraint qualifications (see, e.g., \cite{b2010} and \cite{bgw2009}
  or \cite[Proposition~4.1(iii)]{LBC11})
		\cref{e:FRp} boils down to solving the primal inclusion:
		\begin{equation}
			\label{e:p:minc}
			\text{find  $\x\in X$ such that }	0\in \partial f(\x)+\lin^* \partial g(\lin \x)
		\end{equation}	
		while \cref{e:FRd} boils down to solving the dual inclusion: 
  \begin{equation}
			\label{e:d:minc}
			\text{find  $\y\in Y$ such that $(\exists \x\in X)$ }	-\lin^* \y\in \partial f (\x)\text{  and   } \y\in  \partial g(\lin \x).
		\end{equation}	
		
		Following \cite{bh2013} , we say that $(\overline{\x},\overline{\y})\in X\times Y$
		is a primal-dual solution to both \cref{e:p:minc} and \cref{e:d:minc} if 
		
		\begin{equation}
			\label{e:pd:msol}
			-\lin^* \overline{\y}\in \partial f(\overline{\x})\;\text{  and   }\; \overline{\y}\in  \partial g(\lin \overline{\x}).
		\end{equation}	
  One checks that the existence of a solution to \cref{e:pd:msol} implies the existence of a solution to  both 
	\cref{e:p:minc} and 	\cref{e:d:minc}.  Conversely, a solution to either \cref{e:p:minc} or \cref{e:d:minc} implies the existence of a solution to \cref{e:pd:msol}.
		
		The following result is part of the literature.
		We include proof for the sake of completeness.
		Recall that $S$ was defined in \cref{eq:def:S},  $M$  in \cref{eq:def:M}, and $F$ in \cref{eq:def:F}. Recall $\prox_f=(\Id+\partial f)^{-1}$,
		and $\partial f^*=(\partial f)^{-1}$.	
		\begin{proposition}
			\label{prop:T:op}
			Let $(\x,\y)\in X\times Y$ .
			We set 	
			\begin{equation}
				\label{e:pdhg:gen:update}
				\begin{pmatrix}
					\x^+\\
					\y^+
				\end{pmatrix}=		
				\begin{pmatrix}
					\prox_{\sigma f}(\x-\sigma \lin^* \y)\\
					\prox_{\tau g^*}(\y+\tau \lin(2\x^+-\x))
				\end{pmatrix}.
			\end{equation}
			Let $(\overline{\x},\overline{\y})\in X\times Y$. 
   We set  $T=(M+{\partial F}+S)^{-1}M$.
			Then the following hold.
			\begin{enumerate}
				\item 
				\label{prop:T:op:i}
				We have
				\begin{equation}
					\label{e:def:Tgen}
					\begin{pmatrix}
						\x^+\\
						\y^+
					\end{pmatrix}=		
					T\begin{pmatrix}
						\x\\
						\y
					\end{pmatrix}.
				\end{equation}
				\item 
				\label{prop:T:op:ii}
				Recalling \cref{e:pd:msol} we have 
				$(\overline{\x},\overline{\y})$ is a primal-dual solution of
				\cref{e:p:minc} if and only if $(\overline{\x},\overline{\y})\in \fix T$.
			\end{enumerate}
		\end{proposition}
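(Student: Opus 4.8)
The plan is to establish both items by direct computation: translate the two proximal steps of \cref{e:pdhg:gen:update} into monotone inclusions via the operator form \cref{eq:proxop}, and then match them against the block structure of $M$ (\cref{eq:def:M}) and $S$ (\cref{eq:def:S}).

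\emph{Item \cref{prop:T:op:i}.} Since $\prox_{\sigma f}=(\Id+\sigma\partial f)^{-1}$ and $\prox_{\tau g^*}=(\Id+\tau\partial g^*)^{-1}$, the two lines of \cref{e:pdhg:gen:update} are equivalent, respectively, to
\[
\tfrac1\sigma(\x-\x^+)-\lin^*\y\in\partial f(\x^+)
\quad\text{and}\quad
\tfrac1\tau(\y-\y^+)+\lin(2\x^+-\x)\in\partial g^*(\y^+).
\]
I would then expand $M(\x,\y)-M(\x^+,\y^+)-S(\x^+,\y^+)$: the $-\lin^*\y^+$ entry produced by $-M(\x^+,\y^+)$ cancels the $+\lin^*\y^+$ entry produced by $-S(\x^+,\y^+)$, and similarly the $-\lin\x^+$ term from $-M(\x^+,\y^+)$ combines with the $+\lin\x^+$ from $-S(\x^+,\y^+)$ to leave an extra $+\lin\x^+$, so the result is exactly the pair $\big(\tfrac1\sigma(\x-\x^+)-\lin^*\y,\ \tfrac1\tau(\y-\y^+)+\lin(2\x^+-\x)\big)$. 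Recalling $\partial F(\x^+,\y^+)=\partial f(\x^+)\times\partial g^*(\y^+)$, the two inclusions above are therefore the single inclusion $M(\x,\y)-M(\x^+,\y^+)-S(\x^+,\y^+)\in\partial F(\x^+,\y^+)$, i.e.\ $M(\x,\y)\in(M+\partial F+S)(\x^+,\y^+)$, i.e.\ $(\x^+,\y^+)\in(M+\partial F+S)^{-1}M(\x,\y)=T(\x,\y)$. To upgrade ``$\in$'' to ``$=$'', note that under the step-size condition of \cref{prop:M} the operator $M$ is positive definite; equipping $X\times Y$ with the inner product $(\u,\v)\mapsto\scal{M\u}{\v}$ makes $M^{-1}(\partial F+S)$ maximally monotone (using \cref{e:mm:sumSB}), so $T=(\Id+M^{-1}(\partial F+S))^{-1}$ is a resolvent and hence single-valued with full domain.

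\emph{Item \cref{prop:T:op:ii}.} Since $M$ is single-valued and linear, $(\overline\x,\overline\y)\in\fix T$ iff $M(\overline\x,\overline\y)\in(M+\partial F+S)(\overline\x,\overline\y)$ iff $\bzero\in(\partial F+S)(\overline\x,\overline\y)$. Writing this out blockwise with $\partial F(\overline\x,\overline\y)=\partial f(\overline\x)\times\partial g^*(\overline\y)$ and $S$ from \cref{eq:def:S}, it is equivalent to $-\lin^*\overline\y\in\partial f(\overline\x)$ and $\lin\overline\x\in\partial g^*(\overline\y)$. For the proper l.s.c.\ convex function $g$ one has $\partial g^*=(\partial g)^{-1}$, so the second inclusion is equivalent to $\overline\y\in\partial g(\lin\overline\x)$. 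Hence $(\overline\x,\overline\y)\in\fix T$ iff $-\lin^*\overline\y\in\partial f(\overline\x)$ and $\overline\y\in\partial g(\lin\overline\x)$, which is precisely \cref{e:pd:msol}; that is, $(\overline\x,\overline\y)\in\fix T$ iff $(\overline\x,\overline\y)$ is a primal-dual solution of \cref{e:p:minc}.

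There is no real obstacle here. The only ingredients that are invoked rather than verified by hand are the standard identity $\partial g^*=(\partial g)^{-1}$ and the well-definedness (single-valuedness with full domain) of $T=(M+\partial F+S)^{-1}M$, which is exactly where positive definiteness of $M$, equivalently $\sigma\tau\norm{\lin}^2<1$, is used; the rest is the block bookkeeping showing that the off-diagonal $\pm\lin,\pm\lin^*$ blocks of $M$ and $S$ combine to reproduce the over-relaxation term $2\x^+-\x$ in the $\y$-update.
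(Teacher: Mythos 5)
Your proof is correct and follows the same core computation as the paper: unpack the two prox steps into subdifferential inclusions and verify by block bookkeeping that they amount to $M(\x,\y)\in(M+\partial F+S)(\x^+,\y^+)$. The one genuine difference is how you upgrade ``$\in$'' to ``$=$'' in \cref{prop:T:op:i}: you invoke positive definiteness of $M$, maximal monotonicity of $M^{-1}(\partial F+S)$ in the $M$-inner product, and Minty's theorem to conclude that $T$ is a single-valued resolvent with full domain, whereas the paper argues more elementarily that the prox-based update \cref{eq:prop:t:op:start} already determines $(\x^+,\y^+)$ uniquely, so the inclusion $(\z,M(\x,\y))\in\gra(M+\partial F+S)$ has exactly one solution. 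The paper's route works for arbitrary $\sigma,\tau>0$ (the condition $\sigma\tau\norm{\lin}^2<1$ is only imposed later in the paper, after this proposition), while yours requires that step-size condition; in exchange, yours anticipates the resolvent identity $T=(\Id+M^{-1}(\partial F+S))^{-1}$ that the paper only establishes afterwards in \cref{prop:T:prop}. For \cref{prop:T:op:ii} you pass through the equivalent condition $\bzero\in(\partial F+S)(\overline{\x},\overline{\y})$ rather than through the prox fixed-point equations as the paper does, but both reduce to the same pair of inclusions via $\partial g^*=(\partial g)^{-1}$, so this is only a cosmetic difference.
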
	
		\begin{proof}
			\cref{prop:T:op:i}:
			Indeed,
			\begin{subequations}
				\begin{align}
					\cref{e:pdhg:gen:update}
					&\siff 	\x^+=\prox_{\sigma f}(\x-\sigma \lin^* \y),
					\nonumber
					\\
					&\qquad \y^+=\prox_{\tau g^*}(\y+\tau \lin(2\x^+-\x))
     \label{eq:prop:t:op:start}
					\\
					&\siff 	\x-\sigma \lin^* \y\in \x^++\sigma \partial f(\x^+),
					\nonumber
					\\
					&\qquad \y+\tau \lin(2\x^+-\x)\in \y^++\tau \partial g^*(\y^+)
					\\				
					&\siff 	\x-\x^+\in \sigma \lin^* \y + \sigma \partial f(\x^+),
					\nonumber\\
					&\qquad \y-\y^+\in -\tau \lin(2\x^+-\x) + \tau \partial g^*(\y^+)\\
					&\siff 	\tfrac{1}{\sigma}(\x-\x^+)\in \lin^* \y + \partial f(\x^+),
					\nonumber\\
					&\qquad \tfrac{1}{\tau}(\y-\y^+)\in -\lin(2\x^+-\x) + \partial g^*(\y^+)\\
					&\siff 	\tfrac{1}{\sigma}(\x-\x^+)-\lin^* (\y-\y^+)\in \lin^* \y +\partial f(\x^+)-\lin^* (\y-\y^+),
					\nonumber\\
					&\qquad \tfrac{1}{\tau}(\y-\y^+)-\lin(\x-\x^+)\in-\lin(2\x^+-\x)+\partial g^*(\y^+)-\lin(\x-\x^+)\\
					&\siff 	\tfrac{1}{\sigma}(\x-\x^+)-\lin^* (\y-\y^+)\in \lin^* \y^+ +\partial f(\x^+),
					\nonumber \label{eq:T:op_1}\\
					&\qquad \tfrac{1}{\tau}(\y-\y^+)-\lin(\x-\x^+)\in \partial g^*(\y^+)-\lin \x^+\\
					&\siff 
					\begin{pmatrix}
						\tfrac{1}{\sigma}\Id_X&-\lin^*\\
						-\lin&\tfrac{1}{\tau}\Id_Y
					\end{pmatrix}
					\begin{pmatrix}
						\x-\x^+\\
						\y-\y^+
					\end{pmatrix}
					\in 
					\begin{pmatrix}
						\partial f(\x^+)\\
						\partial g^*(\y^+)
					\end{pmatrix}
					+
					\begin{pmatrix}
						0&\lin^*\\
						-\lin&0
					\end{pmatrix}
					\begin{pmatrix}
						\x^+\\
						\y^+
					\end{pmatrix}
					\\
					&\siff 
					M
					\begin{pmatrix}
						\x-\x^+\\
						\y-\y^+
					\end{pmatrix}
					\in 
					\begin{pmatrix}
						\partial f(\x^+)\\
						\partial g^*(\y^+)
					\end{pmatrix}
					+
					S
					\begin{pmatrix}
						\x^+\\
						\y^+
					\end{pmatrix}
					\\
					&\siff 
					M
					\begin{pmatrix}
						\x\\
						\y
					\end{pmatrix}
					\in 
					M
					\begin{pmatrix}
						\x^+\\
						\y^+
					\end{pmatrix}
					+
					\begin{pmatrix}
						\partial f(\x^+)\\
						\partial g^*(\y^+)
					\end{pmatrix}
					+
					S
					\begin{pmatrix}
						\x^+\\
						\y^+
					\end{pmatrix}=(M+{\partial F}+S)\begin{pmatrix}
						\x^+\\
						\y^+
					\end{pmatrix}   
     \\
     &\siff\begin{pmatrix}
						\x^+\\
						\y^+
					\end{pmatrix}
					\in(M+{\partial F}+S)^{-1}M\begin{pmatrix}
						\x\\
						\y
					\end{pmatrix}    
      \\
     &\siff\begin{pmatrix}
						\x^+\\
						\y^+
					\end{pmatrix}
					=(M+{\partial F}+S)^{-1}M\begin{pmatrix}
						\x\\
						\y
					\end{pmatrix} .   \label{eq:in_eq}
     \end{align}
     \end{subequations}
     Note that the last step \cref{eq:in_eq} follows because \cref{eq:prop:t:op:start} implies that, given $(\x,\y)$, there is exactly one solution $\z$ to the inclusion $(\z,M(\x,\y))\in\gra(M+\partial F+S)$.	
			The proof of 	\cref{prop:T:op:i} is complete.

			\cref{prop:T:op:ii}:
			Indeed,
			\begin{subequations}
				\begin{align}
					&	\quad\begin{pmatrix}
						\overline{\x}\\
						\overline{\y}
					\end{pmatrix}=		
					T\begin{pmatrix}
						\overline{\x}\\
						\overline{\y}
					\end{pmatrix}
					\\
					&\siff
					\overline{\x}=\prox_{\sigma f}(\overline{\x}-\sigma \lin^* \overline{\y})
					\;\text{  and   }\;  \overline{\y}=\prox_{\tau g^*}(\overline{\y}+\tau \lin \overline{\x})
					\\
					&\siff
					\overline{\x}-\sigma \lin^* \overline{\y}\in \overline{\x}+\sigma \partial f(\overline{\x})
					\;\text{  and   }\; \overline{\y}+\tau \lin \overline{\x}\in \overline{\y}+\tau \partial g^*(\overline{\y})
					\\
					&\siff
					-\sigma \lin^* \overline{\y}\in \sigma \partial f(\overline{\x})\;\text{  and   }\; \tau \lin \overline{\x}\in \tau  \partial g^*(\overline{\y})
					\\
					&\siff
					-\lin^* \overline{\y}\in \partial f(\overline{\x})\;\text{  and   }\; \overline{\y}\in  \partial g (\lin \overline{\x})
					\\
					&\siff\text{$(\overline{\x},\overline{\y})$ is a primal-dual solution of
						\cref{e:p:minc}.}
				\end{align}
			\end{subequations}	
			This completes the proof.
		\end{proof}

 \subsection{The range of $\Id-T$}
 In this subsection we apply the results of \cref{Appp:1} to obtain a formula for $\cran(\Id-T)$, starting from the formula for $T$ given by \cref{prop:T:op}\cref{prop:T:op:i}.
 A key operator in our analysis is $M^{-1}(\partial F + S)$.  The significance of this operator becomes apparent below in \cref{prop:T:prop}, and hence we set the stage with some preliminary results about $M$ and $\partial F+ S$.
We start by observing that 
		\cref{e:mm:sumSB} applied with $\bB$ replaced by $\partial F$ implies that
		\begin{equation}
			\label{e:mm:sumSF} 
			\text{$\partial F+S$ is maximally monotone on $X\times Y$.}
		\end{equation}		
  For the remainder of the paper, we impose the assumption that $\sigma,\tau$ are chosen to satisfy
  \begin{empheq}[box=\mybluebox]{equation}
  \sigma\tau\Vert\lin\Vert^2<1.
  \end{empheq}
		The following lemma is straightforward to verify.
		We include the proof for the sake of completeness.
		
		\begin{lemma}
			\label{prop:M}
			$M$ is self-adjoint. Moreover,
			we have: 
			\begin{enumerate}
				\item
				\label{prop:M:0}
				$M$ and $M^{-1}$ are strongly monotone\footnote{Let $B\colon X\rras X$
    be monotone and let $\beta>0$. Then $B$ is \emph{$\beta$-strongly monotone} if $B-\beta \Id$ is monotone.}.
				\item
				\label{prop:M:i}
				$M$ and $M^{-1}$ are surjective.
				\item
				\label{prop:M:ii}
				$M$ and $M^{-1}$ are injective.
                \item
				\label{prop:M:i:ii}
				$M$ and $M^{-1}$ are bijective.
				\item
				\label{prop:M:iii}
				$M$ and $M^{-1}$ are maximally monotone.	
			\end{enumerate}		
		\end{lemma}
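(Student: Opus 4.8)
The plan is to verify the five properties in the order stated, exploiting the explicit block structure of $M$ in \cref{eq:def:M} together with the standing assumption $\sigma\tau\norm{\lin}^2<1$. First, self-adjointness of $M$ is immediate from the block form: the diagonal blocks $\tfrac1\sigma\Id\normX$ and $\tfrac1\tau\Id\normY$ are self-adjoint, and the off-diagonal blocks $-\lin^*$ and $-\lin$ are adjoints of one another, so $M^*=M$.

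\cref{prop:M:0}: To show $M$ is strongly monotone it suffices to exhibit $\beta>0$ with $\scal{(\x,\y)}{M(\x,\y)}\ge\beta(\norm{\x}^2+\norm{\y}^2)$ for all $(\x,\y)\in X\times Y$. Expanding the quadratic form gives $\tfrac1\sigma\norm{\x}^2+\tfrac1\tau\norm{\y}^2-2\scal{\y}{\lin\x}$, and bounding the cross term by $2\abs{\scal{\y}{\lin\x}}\le 2\norm{\lin}\,\norm{\x}\,\norm{\y}\le \norm{\lin}(\alpha\norm{\x}^2+\alpha^{-1}\norm{\y}^2)$ for a suitable $\alpha>0$, one checks using $\sigma\tau\norm{\lin}^2<1$ that the resulting quadratic form in $(\norm{\x},\norm{\y})$ is positive definite, hence bounded below by $\beta(\norm{\x}^2+\norm{\y}^2)$ for some $\beta>0$; the cleanest choice is $\alpha=\sqrt{\tau/\sigma}$, which makes the bound symmetric. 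For $M^{-1}$: a bounded self-adjoint operator that is strongly monotone (equivalently, $M\succeq\beta\Id$) is invertible with bounded inverse satisfying $M^{-1}\succeq(\norm{M})^{-1}\Id\succ 0$, so $M^{-1}$ is strongly monotone as well. This simultaneously gives \cref{prop:M:i} and \cref{prop:M:ii}: strong monotonicity forces injectivity ($M\z=M\w\Rightarrow 0=\scal{\z-\w}{M(\z-\w)}\ge\beta\norm{\z-\w}^2$), and since $M$ is bounded, self-adjoint, and bounded below, its range is closed and dense, hence all of $X\times Y$, so $M$ is surjective; the same applies to $M^{-1}$ (or simply note that the inverse of a bijection is a bijection). \cref{prop:M:i:ii} is then the conjunction of \cref{prop:M:i} and \cref{prop:M:ii}. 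Finally, \cref{prop:M:iii}: $M$ is monotone (being strongly monotone) and everywhere defined and continuous on $X\times Y$, hence maximally monotone by, e.g., \cite[Corollary~20.28]{BC2017} (a monotone operator that is continuous with full domain is maximal); likewise for $M^{-1}$.

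I do not anticipate a genuine obstacle here — the content is linear algebra plus a routine completion-of-the-square estimate. The only point requiring a little care is the quantitative step in \cref{prop:M:0}: one must use the strict inequality $\sigma\tau\norm{\lin}^2<1$ rather than just $\le$, since the positivity of the quadratic form in $(\norm{\x},\norm{\y})$ degenerates exactly when $\sigma\tau\norm{\lin}^2=1$. Once the strong-monotonicity constant $\beta>0$ for $M$ is in hand, everything else (injectivity, surjectivity via the bounded-below self-adjoint argument, maximal monotonicity via continuity and full domain) follows by standard facts, and transferring the properties to $M^{-1}$ is automatic because $M^{-1}$ is again bounded, self-adjoint, and bounded below.
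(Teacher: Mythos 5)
Your proof is correct. The core estimate — expanding $\scal{(\x,\y)}{M(\x,\y)}$, bounding the cross term with the weighted Young inequality at $\alpha=\sqrt{\tau/\sigma}$, and extracting the constant $(1-\sqrt{\sigma\tau}\norm{\lin})\min\{\tfrac{1}{\sigma},\tfrac{1}{\tau}\}$ — is exactly the computation in the paper. Where you diverge is in the auxiliary steps. To transfer strong monotonicity to $M^{-1}$, the paper stays inside monotone operator theory: it notes that strongly monotone operators are $3^*$ monotone and invokes \cite[Proposition~25.16~and~Example~22.7]{BC2017}; you instead exploit self-adjointness and the two-sided bound $\beta\Id\preceq M\preceq\norm{M}\Id$ to conclude $M^{-1}\succeq\norm{M}^{-1}\Id$ via the functional calculus. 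Similarly, for surjectivity the paper cites \cite[Proposition~22.11(ii)]{BC2017} (surjectivity of strongly monotone maximally monotone operators), whereas you argue directly that a bounded-below self-adjoint injective operator has closed dense range. Your route is more elementary and self-contained for this particular $M$, and it has the minor advantage of not needing maximal monotonicity before surjectivity; the paper's route is the one that would generalize to strongly monotone operators that are not self-adjoint. Both are sound, and your observation that the strict inequality $\sigma\tau\norm{\lin}^2<1$ is what keeps $\beta>0$ is exactly the right point of care.
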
	
		\begin{proof}
			\cref{prop:M:0}:
			Indeed, let $(\u,\v)\in X\times Y$.
			Then using Cauchy--Schwarz we have 
			\begin{subequations}
				\begin{align}
					\scal{(\u,\v)}{M(\u,\v)}
					&=\scal{(\u,\v)}{\left(\tfrac{1}{\sigma}\u-\lin^*\v,-\lin \u+\tfrac{1}{\tau}\v\right)}
					\\
					&=\tfrac{1}{\sigma}\norm{\u}^2+\tfrac{1}{\tau}\norm{\v}^2-2\scal{\lin \u}{\v}\ge \tfrac{1}{\sigma}\norm{\u}^2+\tfrac{1}{\tau}\norm{\v}^2-2\norm{\lin}\norm{\u}\norm{\v}
					\\
					&=\tfrac{1}{\sigma}\norm{\u}^2+\tfrac{1}{\tau}\norm{\v}^2-2\left(\tfrac{\sqrt[4]{\sigma \tau}\sqrt{\norm{\lin}}}{\sqrt{\sigma }}\norm{\u}\right)\left(\tfrac{\sqrt[4]{\sigma \tau}\sqrt{\norm{\lin}}}{\sqrt{\tau}}\norm{\v}\right)
					\\
					&\ge \tfrac{1}{\sigma}\norm{\u}^2+\tfrac{1}{\tau}\norm{\v}^2-\tfrac{\sqrt{\sigma \tau}\norm{\lin}}{\sigma}\norm{u}^2 - \tfrac{\sqrt{\sigma \tau}\norm{\lin}}{\tau} \norm{\v}^2
					\\
					&\geq \min \left\{\tfrac{1}{\sigma}, \tfrac{1}{\tau}\right\}(1-\sqrt{\sigma \tau}\norm{\lin})\norm{(\u,\v)}^2.
				\end{align}
			\end{subequations}
			That is, $M$ is $\left( \min \left\{\tfrac{1}{\sigma}, \tfrac{1}{\tau}\right\}(1-\sqrt{\sigma \tau}\norm{\lin})\right)$-strongly monotone.
			Consequently, $M$ is $3^*$ monotone 
   by, e.g., \cite[Example~25.15(iv)]{BC2017}.
			Now combine this with \cite[Proposition~25.16~and~Example~22.7]{BC2017}
			to learn that 
			that $M^{-1}$ is strongly monotone.
			\cref{prop:M:i}:	
			Combine \cref{prop:M:0} and \cite[Proposition~22.11(ii)]{BC2017}.
			\cref{prop:M:ii}:	
			This is a direct consequence of \cref{prop:M:0}.
            \cref{prop:M:i:ii}:
            Combine \cref{prop:M:i} and \cref{prop:M:ii}.
			\cref{prop:M:iii}:	
			Using \cref{prop:M:0} we have  $M$ and $M^{-1}$ are monotone.
			Now combine this with \cite[Example~20.34]{BC2017}.
		\end{proof}

		Recalling that $M$ is positive definite, in the following we let 
		$Z$ be the Hilbert space obtained 
		by endowing $X\times Y$ with the inner product and induced norm 
		\begin{empheq}[box=\mybluebox]{equation}
			\label{e:hilbertZ}
			\text{$\scal{\cdot}{\cdot}\normM:Z\times Z\to \RR:(\u,\v)\mapsto\scal{\u}{M\v}$\;\;\; and\;\;\;$\norm{\u}\normM=\sqrt{\scal{\u}{M\u}}$}
		\end{empheq}
		respectively.
		
		\begin{lemma}
			\label{lem:maxmonosum:op}
			$M^{-1}({\partial F}+S)\colon Z\rras Z$ is maximally monotone.	
		\end{lemma}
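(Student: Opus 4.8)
The plan is to recognize $M^{-1}(\partial F + S)$ as a renorming of the maximally monotone operator $\partial F + S$ under the change of inner product induced by $M$. The key observation is that $Z$ is the Hilbert space $X \times Y$ equipped with the $M$-inner product $\scal{\cdot}{\cdot}\normM = \scal{\cdot}{M\cdot}$, which is a genuine inner product because $M$ is self-adjoint and positive definite by \cref{prop:M}. Under this renorming, the composition $M^{-1}(\partial F + S)$ plays the role that $\partial F + S$ plays in the original space: this is a standard device used in the analysis of PDHG as a preconditioned proximal point method (going back to the Chambolle--Pock / O'Connor--Vandenberghe viewpoint).

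The first step is to verify monotonicity of $M^{-1}(\partial F+S)$ with respect to $\scal{\cdot}{\cdot}\normM$. Take $(\x_1,\y_1), (\x_2,\y_2)$ with $\u_i \in (M^{-1}(\partial F+S))(\x_i,\y_i)$, i.e.\ $M\u_i \in (\partial F + S)(\x_i,\y_i)$. Then
\begin{equation*}
\scal{(\x_1,\y_1)-(\x_2,\y_2)}{\u_1 - \u_2}\normM = \scal{(\x_1,\y_1)-(\x_2,\y_2)}{M(\u_1-\u_2)} = \scal{(\x_1,\y_1)-(\x_2,\y_2)}{(M\u_1) - (M\u_2)} \ge 0,
\end{equation*}
where the last inequality is exactly monotonicity of $\partial F + S$ (in the original inner product) applied to the pairs $((\x_i,\y_i), M\u_i) \in \gra(\partial F+S)$. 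So $M^{-1}(\partial F+S)$ is monotone on $Z$.

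For maximality, the cleanest route is Minty's theorem: a monotone operator on a real Hilbert space is maximally monotone iff $\Id + (\text{operator})$ is surjective. Here "$\Id$" must be the identity on $Z$, and the relevant adjoint/Riesz identifications are all with respect to $\scal{\cdot}{\cdot}\normM$. Concretely, I would show $\Id + M^{-1}(\partial F + S)$ is surjective on $Z$: given $(\b_1,\b_2) \in Z$, solving $(\b_1,\b_2) \in (\Id + M^{-1}(\partial F+S))(\x,\y)$ is equivalent to $M(\b_1,\b_2) \in (M + \partial F + S)(\x,\y)$, i.e.\ $(\x,\y) \in (M + \partial F + S)^{-1}(M(\b_1,\b_2))$. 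Since $M$ is maximally monotone and in fact strongly monotone (\cref{prop:M}\cref{prop:M:0}) while $\partial F + S$ is maximally monotone by \cref{e:mm:sumSF}, the sum $M + \partial F + S$ is maximally monotone and strongly monotone, hence surjective (e.g.\ Minty, or \cite[Proposition~22.11(ii)]{BC2017}), so the required preimage is nonempty (indeed a singleton). Therefore $\Id + M^{-1}(\partial F+S)$ is surjective on $Z$, and by Minty's theorem $M^{-1}(\partial F+S)$ is maximally monotone on $Z$.

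The one subtlety to keep straight — and the main place where a careless argument would go wrong — is the bookkeeping between the three inner products: the original product on $X\times Y$, the $M$-product defining $Z$, and the Riesz identification implicit in writing "$M^{-1}$" and in invoking Minty on $Z$. The point is that $M \colon Z \to (X\times Y)$ is precisely the Riesz-type isometry converting the $M$-product into the standard one, which is why $\langle v, M^{-1} w\rangle_M = \langle v, w\rangle$ and why surjectivity of $M + \partial F + S$ in the original space is exactly what is needed. Once this correspondence is stated cleanly, both monotonicity and maximality follow with no real computation. Alternatively, one could cite a ready-made statement (e.g.\ \cite[Proposition~23.34]{BC2017} on composition with a linear isomorphism, or the standard preconditioned-proximal-point lemma) and skip the Minty argument entirely, but I prefer to give the short self-contained proof via Minty since all ingredients are already in hand.
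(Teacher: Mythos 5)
Your proof is correct and is in essence the same argument as the paper's: the paper's entire proof is a one-line citation of \cite[Proposition~20.24]{BC2017} (the renorming/preconditioning result for $U^{-1}A$ with $U$ self-adjoint and strongly monotone), combined with \cref{e:mm:sumSF} and \cref{prop:M}\cref{prop:M:0}. Your Minty-based computation simply unfolds that cited proposition in full detail, so there is no substantive difference in approach.
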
	
		\begin{proof}
			Combine \cref{e:mm:sumSF}  with
			\cite[Proposition~20.24]{BC2017} in view of 
			\cref{prop:M}\cref{prop:M:0}.
		\end{proof}	
  
Let $D\subseteq X$
 and let $L\colon X\to X$ be linear  and continuous.
 It is easy to verify that 
\begin{equation}
\label{eq:lin:cl}
\overline{L (D)}=\overline{L (\overline D)}.
\end{equation}
		The following result is of central importance in our work.
		\begin{theorem}
			\label{thm:ranform}
			We have 
				\begin{equation}
					\cran{(\partial F+S)}
					=\overline{(\dom f^*+\lin^* (\dom g^*))} \times 
					\overline{(\dom g -\lin(\dom f))}.
				\end{equation}
		\end{theorem}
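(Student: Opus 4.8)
The plan is to deduce \cref{thm:ranform} as an immediate corollary of \cref{thm:ranform:mm}. The key observation is that $\partial F = \partial f \times \partial g^*$, so if we set $B_1 := \partial f \colon X \rras X$ and $B_2 := \partial g^* \colon Y \rras Y$, then $\bB$ as defined in \cref{eq:def:bB} satisfies $\bB(\x,\y) = B_1\x \times B_2^{-1}\y = \partial f(\x) \times (\partial g^*)^{-1}(\y) = \partial f(\x) \times \partial g(\y)$. This is \emph{not} quite $\partial F$, so we must be careful: the operator $\bB$ from \cref{thm:ranform:mm} built from these choices is $\partial f \times \partial g$, whereas we want the range of $\partial F + S = (\partial f \times \partial g^*) + S$. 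The fix is to instead take $B_1 := \partial f$ and $B_2 := \partial g$ (rather than $\partial g^*$); then $B_2^{-1} = (\partial g)^{-1} = \partial g^*$, so $\bB(\x,\y) = \partial f(\x) \times \partial g^*(\y) = \partial F(\x,\y)$, exactly as needed.

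With that identification in hand, I would verify the hypotheses of \cref{thm:ranform:mm}: we need $B_1 = \partial f$ and $B_2 = \partial g$ to be maximally monotone and $3^*$ monotone. Maximal monotonicity of $\partial f$ and $\partial g$ is standard (Rockafellar's theorem), given that $f$ and $g$ are proper, l.s.c., and convex by \cref{eq:def:fg}; $3^*$ monotonicity is exactly \cref{fact:subd3*} applied to $f$ and to $g$. So \cref{thm:ranform:mm} applies and yields
\[
\cran(\partial F + S) = \overline{(\ran \partial f + \lin^*(\ran \partial g))} \times \overline{(\dom \partial g - \lin(\dom \partial f))}.
\]

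The remaining step is to rewrite this in terms of domains of conjugates. Here I would use two standard facts: first, $\ran \partial f = \dom (\partial f)^{-1} = \dom \partial f^*$, and likewise $\ran \partial g = \dom \partial g^*$; second, for a proper l.s.c.\ convex function $h$, one has $\overline{\dom \partial h} = \overline{\dom h}$ (the domain of the subdifferential is dense in the domain of the function). Applying the first fact replaces $\ran \partial f$ by $\dom \partial f^*$ and $\ran \partial g$ by $\dom \partial g^*$. For the first coordinate, I then want $\overline{\dom \partial f^* + \lin^*(\dom \partial g^*)} = \overline{\dom f^* + \lin^*(\dom g^*)}$; this follows by taking closures, using $\overline{\dom \partial f^*} = \overline{\dom f^*}$ and $\overline{\dom \partial g^*} = \overline{\dom g^*}$, the continuity of $\lin^*$ (so that \cref{eq:lin:cl} lets us pass closures through $\lin^*$), and the elementary fact that $\overline{A+B} = \overline{\overline A + \overline B}$ for the closure of a sum. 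For the second coordinate, $\overline{\dom \partial g - \lin(\dom \partial f)} = \overline{\dom g - \lin(\dom f)}$ follows the same way, using $\overline{\dom \partial g} = \overline{\dom g}$, $\overline{\dom \partial f} = \overline{\dom f}$, and continuity of $\lin$ together with \cref{eq:lin:cl}.

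The main obstacle—really the only subtlety—is the careful bookkeeping in the last step: one must justify commuting the closure operation with the (continuous, linear) images under $\lin$ and $\lin^*$ and with Minkowski sums, and must invoke the density result $\overline{\dom \partial h} = \overline{\dom h}$ rather than the false equality $\dom \partial h = \dom h$. None of this is deep, but it is where the proof could go wrong if one is sloppy about which sets are being closed. Everything else is a direct substitution into \cref{thm:ranform:mm}.
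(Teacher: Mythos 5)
Your proposal is correct and follows essentially the same route as the paper: the paper likewise applies \cref{thm:ranform:mm} with $(B_1,B_2)=(\partial f,\partial g)$ (so that $\bB=\partial f\times(\partial g)^{-1}=\partial f\times\partial g^*=\partial F$), invokes \cref{fact:subd3*} for $3^*$ monotonicity, and then performs the same closure bookkeeping via $\overline{\dom}\,\partial h=\overline{\dom}\,h$, the identity $(\partial h)^{-1}=\partial h^*$, and \cref{eq:lin:cl}. Your explicit warning against the wrong choice $B_2=\partial g^*$ is exactly the point where the substitution could go astray, and you handle it as the paper does.
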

		\begin{proof}
			Recall that by  \cite[Corollary~16.30]{BC2017} we have
			$(\partial f^*, \partial g^*)=((\partial f)^{-1}, (\partial g)^{-1})$.
			Moreover, by e.g., \cite[Corollary~16.39]{BC2017} we have
			\begin{equation}
				\label{e:BronR}
				\overline{\dom}\  \partial f =\overline{\dom} f.
			\end{equation}
It follows from \cref{eq:lin:cl} applied with $L$ replaced by $\lin$
 and $D$ replaced by $\dom \partial f$
and again by  $D$ replaced by $\dom  f$
in view of \cref{e:BronR} that 
 $\overline{\lin (\dom \partial f)}
 =\overline{\lin (\overline{\dom}\ \partial f)}
  =\overline{\lin (\overline{\dom}\  f)}
 =\overline{\lin (\dom  f)}$.
Similarly, we conclude that
$\overline{\lin^* (\dom \partial g^*)}=\overline{\lin^* (\dom  g^*)}$.
It therefore follows from 
			\cref{thm:ranform:mm}, applied with $(B_1,B_2)$
			replaced by $(\partial f, \partial g)$,
			in view of \cref{fact:subd3*} that
			\begin{subequations}
				\begin{align}
					\cran{(\partial F+S)}
					&= {(\overline{\ran \partial f+\lin^* (\ran (\partial g^*)^{-1})})} \times 
					{(\overline{\dom \partial g -\lin (\dom \partial f)})}\\
					&= {(\overline{\dom \partial f^*+\lin^* (\dom \partial g^*)})} \times 
					{(\overline{\dom \partial g -\lin (\dom \partial f)})}
                \\
                    &={(\overline{\overline{\dom}\  \partial f^*+\overline{\lin^* (\dom \partial g^*)}})} \times 
					{(\overline{\overline{\dom}\ \partial g -\overline{\lin (\dom \partial f)}})}\\
					&= {(\overline{\overline{\dom}\  f^*+\overline{\lin^* (\dom g^*)}})} \times 
					{(\overline{\overline{\dom}\ g -\overline{\lin (\dom f)}})}
					\\
					&={(\overline{\dom  f^*+\lin^* (\dom g^*)}}) \times 
					{(\overline{\dom g - \lin (\dom f)}}).
				\end{align}
			\end{subequations}
			The proof is complete.
		\end{proof}	
  
We are now ready to prove the main result in this section.
		\begin{theorem}
			\label{prop:T:prop}
			Let
			$T=(M+\partial F+S)^{-1}M$, that is, the PDHG operator introduced in \cref{prop:T:op}.
			Then the following hold:
			\begin{enumerate}
				\item
				\label{prop:T:prop:i}
				$T=(\Id+M^{-1}(\partial F+S))^{-1}$.
				\item  
				\label{prop:T:prop:ii}
				$T\colon Z\to Z$ is firmly nonexpansive\footnote{Let $T\colon X\to X$.
    Then $T$ is \emph{firmly nonexpansive} if $(\forall (x,y)\in X\times X)$
		$\norm{Tx-Ty}^2+\norm{(\Id-T)x-(\Id-T)y}^2\le \norm{x-y}^2$.}.   
				\item 
				\label{prop:T:prop:iii}
				$\Id-T=(\Id+(\partial F+S)^{-1}M)^{-1}$.
				\item 
				\label{prop:T:prop:iv}
				$\ran (\Id-T)=M^{-1}(\ran (\partial F+S))$.
				\item 
				\label{prop:T:prop:v}
				$\cran (\Id-T)=M^{-1}(\cran (\partial F+S))$.
				\item 
				\label{prop:T:prop:vi}
				$\cran (\Id-T)=M^{-1}\big( \overline{(\dom f^*+\lin^* (\dom g^*))} \times 
				\overline{(\dom g -\lin(\dom f))}\big).$
			\end{enumerate}
		\end{theorem}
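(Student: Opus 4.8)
The plan is to read off all six statements from the single identity $T=(M+\partial F+S)^{-1}M$ (\cref{prop:T:op}\cref{prop:T:op:i}), combined with \cref{prop:M}, \cref{lem:maxmonosum:op} and \cref{thm:ranform}, using only elementary resolvent calculus; no new estimates are needed, since the analytic content has already been absorbed into \cref{thm:ranform}.

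\emph{Part \cref{prop:T:prop:i}.} Because $M$ is a linear bijection (\cref{prop:M}\cref{prop:M:i:ii}), I would write $M+\partial F+S=M\circ\bigl(\Id+M^{-1}(\partial F+S)\bigr)$ as an identity of set-valued operators and invert it, using $(A\circ B)^{-1}=B^{-1}\circ A^{-1}$ valid when $B$ is a single-valued bijection. This gives $T=(M+\partial F+S)^{-1}M=(\Id+M^{-1}(\partial F+S))^{-1}\circ M^{-1}\circ M=(\Id+M^{-1}(\partial F+S))^{-1}$; that is, $T$ is the resolvent of $M^{-1}(\partial F+S)$ computed in the Hilbert space $Z$.

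\emph{Parts \cref{prop:T:prop:ii} and \cref{prop:T:prop:iii}.} By \cref{lem:maxmonosum:op}, $M^{-1}(\partial F+S)$ is maximally monotone on $Z$, so the standard theory of resolvents (e.g.\ \cite[Corollary~23.9]{BC2017}) yields that $T$ is firmly nonexpansive on $Z$ and $\dom T=Z$, which is \cref{prop:T:prop:ii}. For \cref{prop:T:prop:iii} I would apply the resolvent identity $\Id-\J{A}=\J{A^{-1}}$ (e.g.\ \cite[Proposition~23.20]{BC2017}) with $A=M^{-1}(\partial F+S)$; since $A^{-1}=(\partial F+S)^{-1}M$, this gives $\Id-T=(\Id+(\partial F+S)^{-1}M)^{-1}$.

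\emph{Parts \cref{prop:T:prop:iv}--\cref{prop:T:prop:vi}.} For \cref{prop:T:prop:iv}, $\Id-T$ is single-valued with full domain, so $\ran(\Id-T)=\dom\,(\Id-T)^{-1}$; by \cref{prop:T:prop:iii}, $(\Id-T)^{-1}=\Id+(\partial F+S)^{-1}M$, whose domain is $\{\,\w\in X\times Y\mid M\w\in\ran(\partial F+S)\,\}=M^{-1}(\ran(\partial F+S))$. For \cref{prop:T:prop:v}, since $M$ (equivalently $M^{-1}$) is a continuous linear bijection of $X\times Y$ and the $\norm{\cdot}\normM$-topology coincides with the product topology (\cref{prop:M}), $M^{-1}$ is a homeomorphism and hence commutes with closures; combining this with \cref{prop:T:prop:iv} gives $\cran(\Id-T)=\overline{M^{-1}(\ran(\partial F+S))}=M^{-1}(\cran(\partial F+S))$. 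Finally, \cref{prop:T:prop:vi} follows by substituting the formula for $\cran(\partial F+S)$ from \cref{thm:ranform} into \cref{prop:T:prop:v}. The only points needing care are keeping straight on which space ($Z$ versus $X\times Y$) each operator acts — and noting that passing between the two equivalent norms is harmless — and checking that the operators being inverted (compositions with the bijection $M$, and $\Id-T$ itself) are single-valued with full domain, so that ``$\ran$'' and ``$\dom$ of the inverse'' may be interchanged freely; there is no genuine obstacle beyond this bookkeeping.
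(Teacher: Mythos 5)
Your proposal is correct and follows essentially the same route as the paper: part \cref{prop:T:prop:i} by the same algebraic factorization through the bijection $M$, part \cref{prop:T:prop:ii} from maximal monotonicity of $M^{-1}(\partial F+S)$ on $Z$, part \cref{prop:T:prop:iii} via the inverse-resolvent identity (the paper routes this through \cite[Proposition~23.34(iii)]{BC2017}, which is the same fact in different packaging), and parts \cref{prop:T:prop:iv}--\cref{prop:T:prop:vi} by the same domain/range bookkeeping, commuting $M^{-1}$ with closures, and substituting \cref{thm:ranform}. The only cosmetic quibble is your side condition in part \cref{prop:T:prop:i} that $(A\circ B)^{-1}=B^{-1}\circ A^{-1}$ requires $B$ single-valued --- here $B=\Id+M^{-1}(\partial F+S)$ is set-valued, but the identity holds for arbitrary set-valued operators, so nothing is lost.
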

		\begin{proof}
			\cref{prop:T:prop:i}:
			Indeed, we have 
			$T=(M+{\partial F}+S)^{-1}M
			=(M^{-1}(M+{\partial F}+S))^{-1}
			=(\Id+M^{-1}({\partial F}+S))^{-1}$.
			\cref{prop:T:prop:ii}:
			Combine \cref{prop:T:prop:i},
			\cref{lem:maxmonosum:op},
			\cref{prop:M}\cref{prop:M:0}
			and, e.g.,
			\cite[Proposition~23.10]{BC2017}.
			\cref{prop:T:prop:iii}:
			It follows from 
			\cite[Proposition~23.34(iii)]{BC2017}
			in view of \cref{prop:M}\cref{prop:M:0}
			that 
			\begin{subequations}
				\begin{align}
					\Id-T
					&=M^{-1} (\Id+M({\partial F}+S)^{-1})^{-1} M
=M^{-1}(M^{-1}+({\partial F}+S)^{-1})^{-1} 
					\\
					&=(\Id+({\partial F}+S)^{-1}M)^{-1} .
				\end{align}
			\end{subequations}
			\cref{prop:T:prop:iv}:
			Using \cref{prop:T:prop:iii} we have 
			\begin{subequations}
				\begin{align}
					\ran(\Id-T)
					&=\ran (\Id+({\partial F}+S)^{-1}M)^{-1} =\dom (\Id+({\partial F}+S)^{-1}M)
					\\
					&=\dom (({\partial F}+S)^{-1}M) =\ran (({\partial F}+S)^{-1}M)^{-1}
					\\
					&=\ran (M^{-1}({\partial F}+S))=M^{-1}(\ran ({\partial F}+S)),
				\end{align}
			\end{subequations}
			where in the last identity we used \cref{prop:M}\cref{prop:M:i:ii}.
			\cref{prop:T:prop:v}:
			It follows from \cref{prop:T:prop:iv}  that
            $\cran (\Id-T)=\overline{M^{-1}(\ran (\partial F+S))}
            =\overline{M^{-1}(\cran (\partial F+S))}=M^{-1}(\cran (\partial F+S))$.
            Here the second identity follows from applying 
            \cref{eq:lin:cl} with $X$ replaced by $X\times Y$,
            $L$ replaced by $M^{-1}$ and $D$ replaced by $\ran (\partial F+S)$, 
            while the third identity follows from \cref{prop:M}\cref{prop:M:i:ii}.
			\cref{prop:T:prop:vi}:
			Combine \cref{prop:T:prop:v}
			and \cref{thm:ranform}.
		\end{proof}

\subsection{The infimal displacement vector associated with $T$}
\label{subsec:infimal}
We point out that 
\cref{prop:T:prop}
is a powerful and instrumental tool 
in analyzing the behaviour of PDHG in
the inconsistent case in view \cref{prop:T:op}\cref{prop:T:op:ii}.
Indeed, if 
$(\bzero,\bzero)\not \in \ran(\Id-T)$  then $T$ does not have a fixed point.
More concretely, in this section, we study the minimal norm element in $\cran(\Id-T)$.
As we shall see below, this allows us to diagnose
if
the inconsistency comes from the primal problem or from the dual problem or from both.
		In this section, we assume that
		\begin{empheq}[box=\mybluebox]{equation}
			\text{$T\colon Z\to Z$ is defined as in \cref{e:def:Tgen}.}
		\end{empheq}
		Because $T$ is firmly nonexpansive (see \cref{prop:T:prop}\cref{prop:T:prop:ii}) we learn 
		by, e.g., \cite[Example~20.29]{BC2017}
		that
		\begin{equation}
			\label{e:Id-tmm}
			\text{$\Id -T\colon Z\to Z$ is maximally monotone.}	
		\end{equation}	
  It, therefore, follows from \cref{e:Id-tmm} and, e.g., \cite[Corollary~21.14]{BC2017} that $\cran(\Id-T)$ is convex.
		Recalling \cref{e:hilbertZ} we now define the \textit{infimal displacement vector} $\v$
		\begin{empheq}[box=\mybluebox]{equation}
			\label{e:def:v:projm}
			\v
			=\argmin_{\w\in \cran(\Id-T)}\norm{\w}\normM.
		\end{empheq}
 The vector $\v$ has a beautiful interpretation. Indeed, in some sense, $\norm{\v}$ can be viewed as a measurement of how far our problem is from being consistent.
We now have the following useful facts.
		\begin{fact}
			\label{fact:fejer}
			Suppose $\v\in \ran(\Id-T) $,
			equivalently;  $\fix (\v+T)\neq \fady$. 
			Let $\z_0\in X$. $(\forall \knn)$
update via:
   \begin{equation}
				\z_{k+1}=T\z_k.
			\end{equation}	
			Then 
			$(\z_k+k\v)_\knn$ is Fej\'er 	monotone\footnote{Let $(\x_k)_\knn$ be a sequence in $X$ and let $C$ be a nonempty subset of
				$X$. Then $(\x_k)_\knn$ is Fej\'er 	monotone with respect to $C$
				if
				$
				(\forall \c\in C) (\forall \knn) \quad \norm{\x_{k+1}-\c}\le \norm{\x_k-\c}.	
				$}
			with respect to
			$\fix (\v+T)$, hence $(\z_k+k\v)_\knn$ is bounded.
		\end{fact}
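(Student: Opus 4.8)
The plan is to prove that $(\z_k+k\v)_\knn$ is Fej\'er monotone with respect to $\fix(\v+T)$; the asserted boundedness is then immediate, since $\fix(\v+T)\neq\fady$ by hypothesis, so fixing any $\c\in\fix(\v+T)$ and iterating the Fej\'er inequality yields $\norm{\z_k+k\v-\c}\normM\le\norm{\z_0-\c}\normM$ for all $\knn$ (cf.\ \cite[Proposition~5.4]{BC2017}). Three ingredients will be used. (a) $T\colon Z\to Z$ is firmly nonexpansive (\cref{prop:T:prop}\cref{prop:T:prop:ii}); expanding the defining inequality in the footnote shows this is equivalent to the cocoercivity estimate $\scal{\z-\w}{(\Id-T)\z-(\Id-T)\w}\normM\ge\norm{(\Id-T)\z-(\Id-T)\w}\normM^2$ for all $\z,\w\in Z$. (b) $\cran(\Id-T)$ is closed and convex (convexity was noted just after \cref{e:Id-tmm}), and by \cref{e:def:v:projm} $\v$ is its element of minimal $\norm{\cdot}\normM$, i.e.\ $\v=\Proj_{\cran(\Id-T)}\bzero$; hence the obtuse-angle characterization of the projection gives $\scal{\v}{\w-\v}\normM\ge0$ for every $\w\in\cran(\Id-T)$. (c) $\c\in\fix(\v+T)$ if and only if $(\Id-T)\c=\v$.

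Fix $\c\in\fix(\v+T)$ and $\knn$, and set $\d_k:=(\Id-T)\z_k=\z_k-\z_{k+1}\in\ran(\Id-T)$. Starting from the identity $\z_{k+1}+(k+1)\v-\c=(\z_k+k\v-\c)+(\v-\d_k)$ and expanding $\norm{\cdot}\normM^2$, the inequality $\norm{\z_{k+1}+(k+1)\v-\c}\normM\le\norm{\z_k+k\v-\c}\normM$ to be proved is equivalent to
\[
2\scal{\z_k+k\v-\c}{\v-\d_k}\normM+\norm{\v-\d_k}\normM^2\le0 .
\]
I would split $\scal{\z_k+k\v-\c}{\v-\d_k}\normM=\scal{\z_k-\c}{\v-\d_k}\normM+k\scal{\v}{\v-\d_k}\normM$. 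Since $(\Id-T)\c=\v$ by (c), ingredient (a) applied at $\z_k$ and $\c$ gives $\scal{\z_k-\c}{\d_k-\v}\normM\ge\norm{\d_k-\v}\normM^2$, i.e.\ $\scal{\z_k-\c}{\v-\d_k}\normM\le-\norm{\v-\d_k}\normM^2$; and since $\d_k\in\ran(\Id-T)\subseteq\cran(\Id-T)$, ingredient (b) gives $\scal{\v}{\d_k-\v}\normM\ge0$, i.e.\ $k\scal{\v}{\v-\d_k}\normM\le0$. Adding, $2\scal{\z_k+k\v-\c}{\v-\d_k}\normM\le-2\norm{\v-\d_k}\normM^2$, so the displayed left-hand side is at most $-\norm{\v-\d_k}\normM^2\le0$, which is what is needed.

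I do not expect a real obstacle here: the result and its proof are essentially classical (compare \cite{BBR78}). The only points requiring attention are that one must use firm nonexpansiveness of $T$ --- not merely nonexpansiveness --- for the first of the two bounds, and that the second bound is precisely where the optimality of $\v$ in \cref{e:def:v:projm} enters. For readers who prefer to avoid the projection characterization, here is an equivalent route: one first proves by induction on $j\ge0$ that $\c-j\v\in\fix(\v+T)$, equivalently that $T(\c-j\v)=\c-(j+1)\v$; the inductive step uses nonexpansiveness of $T$ to obtain $\norm{(\Id-T)(\c-(j+1)\v)}\normM\le\norm{\v}\normM$ and then the uniqueness of the minimal-norm element of $\cran(\Id-T)$ to upgrade this to $(\Id-T)(\c-(j+1)\v)=\v$. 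With this in hand, the Fej\'er inequality is a single application of nonexpansiveness, $\norm{\z_{k+1}-(\c-(k+1)\v)}\normM=\norm{T\z_k-T(\c-k\v)}\normM\le\norm{\z_k-(\c-k\v)}\normM$.
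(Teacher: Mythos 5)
Your proof is correct. Note that the paper itself does not prove this fact; it simply cites \cite[Proposition~2.5]{BM2015}, so there is no in-paper argument to compare against. Your direct computation is sound: the decomposition $\z_{k+1}+(k+1)\v-\c=(\z_k+k\v-\c)+(\v-\d_k)$ combined with the cocoercivity of $\Id-T$ (applied at $\z_k$ and $\c$, using $(\Id-T)\c=\v$) and the variational characterization of $\v$ as $P_{\cran(\Id-T)}\bzero$ in the $M$-norm yields $2\scal{\z_k+k\v-\c}{\v-\d_k}\normM+\norm{\v-\d_k}\normM^2\le-\norm{\v-\d_k}\normM^2\le 0$, exactly as needed; boundedness then follows by iterating the inequality against any fixed $\c\in\fix(\v+T)$. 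Your alternative route --- first establishing $T(\c-j\v)=\c-(j+1)\v$ by induction (nonexpansiveness gives $\norm{(\Id-T)(\c-(j+1)\v)}\normM\le\norm{\v}\normM$, and uniqueness of the minimal-norm element of the closed convex set $\cran(\Id-T)$ forces equality), then applying nonexpansiveness once --- is essentially the argument of the cited reference, and it has the side benefit of reproving \cref{fact:fixvT} along the way. One cosmetic point: all norms and inner products here must be the $M$-ones on $Z$, since that is the space on which $T$ is firmly nonexpansive and in which $\v$ is defined; you do this consistently, but it is worth stating explicitly that ``bounded in $\norm{\cdot}\normM$'' and ``bounded in the product norm'' coincide because $M$ is positive definite.
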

		\begin{proof}
			See \cite[Proposition~2.5]{BM2015}.
		\end{proof}	
  		\begin{fact}
			\label{fact:fixvT}
			Suppose that  $\z_0\in \fix(\v+T)$.
			Then $\z_0-\RR_{+}\v\subseteq \fix(\v+T)$.
		\end{fact}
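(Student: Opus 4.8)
The plan is to unwind the definition of $\fix(\v+T)$ and exploit the firm nonexpansiveness of $T$ together with the characterization $\Id-T=(\Id+(\partial F+S)^{-1}M)^{-1}$ from \cref{prop:T:prop}\cref{prop:T:prop:iii}. First I would observe that $\z_0\in\fix(\v+T)$ means $\v+T\z_0=\z_0$, i.e.\ $(\Id-T)\z_0=\v$; in other words, $\v\in\ran(\Id-T)$ and $\z_0$ is a point attaining it. The goal is to show that for every $\lambda\ge 0$, the point $\z_\lambda:=\z_0-\lambda\v$ also satisfies $(\Id-T)\z_\lambda=\v$, equivalently $T\z_\lambda=\z_\lambda-\v=\z_0-(\lambda+1)\v$.

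The key step is a translation/shift argument. Since $T$ is firmly nonexpansive on $Z$ and $\Id-T$ is maximally monotone (see \cref{e:Id-tmm}), and since $\v\in\ran(\Id-T)$, the set $\fix(\v+T)$ is exactly the solution set of the monotone inclusion $\bzero\in(\Id-T)\z-\v$; this set is closed and convex, and by Fej\'er monotonicity (\cref{fact:fejer}) it is nonempty. I would then use the elementary but crucial identity that, for a firmly nonexpansive $T$, the operator $\Id-T$ is also firmly nonexpansive, hence $1$-Lipschitz; combined with the fact that along the iteration $\z_{k+1}=(\v+T)\z_k$ starting from a fixed point $\z_0$ one gets $\z_k=\z_0-k\v$ stays fixed, so $(\Id-T)(\z_0-k\v)=\v$ for all $k\in\NN$. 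To pass from integer multiples to all $\lambda\ge 0$, I would invoke convexity: $\z_0-\lambda\v$ for $\lambda\in[0,k]$ is a convex combination of $\z_0$ and $\z_0-k\v$, both of which lie in the convex set $\fix(\v+T)$, so the whole segment does too; letting $k\to\infty$ covers all of $\z_0-\RP\v$.

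Alternatively — and perhaps more cleanly — I would argue directly from monotonicity: if $(\Id-T)\z_0=\v$ and $(\Id-T)\z=\v$ for some other point $\z$, then $\scal{\z_0-\z}{(\Id-T)\z_0-(\Id-T)\z}\normM=0$, and firm nonexpansiveness forces $T\z_0-T\z=\z_0-\z$, i.e.\ $\z_0-\z\in\ker(\Id-T)$ viewed appropriately; one shows $\v$ lies in the "recession'' direction of the fixed-point structure. Concretely, writing $N:=(\partial F+S)^{-1}M$, the relation $(\Id-T)\z_0=\v$ reads $\z_0-\v\in$ the resolvent preimage, i.e.\ $\v\in N(\z_0-\v)$; then monotonicity of $N$ (inherited from maximal monotonicity of $\partial F+S$ in the $M$-geometry, \cref{lem:maxmonosum:op}) gives $\scal{\v-\bzero}{(\z_0-\v)-\z^*}\normM\ge 0$ for any zero $\z^*$ of $N$, and a telescoping/self-consistency check yields $\v\in N(\z_0-(\lambda+1)\v)$ for all $\lambda\ge 0$, which is exactly $T(\z_0-\lambda\v)=\z_0-(\lambda+1)\v$.

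The main obstacle I anticipate is justifying the step from integer shifts to the continuous ray $\z_0-\RP\v$ without circularity: the convexity route needs $\fix(\v+T)$ convex, which in turn relies on $\Id-T-\v$ being maximally monotone — true since $\Id-T$ is maximally monotone and translation by a constant preserves this — and on nonemptiness, which is the hypothesis. Once convexity and the integer-multiple membership are in hand, the rest is routine. I expect the cleanest writeup is: (1) note $(\Id-T)\z_0=\v$; (2) iterate to get $(\Id-T)(\z_0-k\v)=\v$ for all $k\in\NN$; (3) invoke convexity of $\fix(\v+T)$ (from maximal monotonicity of the translated operator $\Id-T-\v$ plus the hypothesis) to fill in the segment; (4) take $k\to\infty$.
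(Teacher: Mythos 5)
The paper proves this fact only by citing \cite[Proposition~2.5(i)]{BM2015}, so the relevant comparison is with the standard argument behind that reference. Your proposal has a genuine gap at its central step. You assert that iterating $\z_{k+1}=(\v+T)\z_k$ from $\z_0\in\fix(\v+T)$ yields $\z_k=\z_0-k\v$; in fact it yields the constant sequence $\z_k=\z_0$. If you instead iterate $T$ itself, then $\z_1=T\z_0=\z_0-\v$, but $\z_2=T(\z_0-\v)$ is not known to equal $\z_0-2\v$ --- that is exactly the assertion to be proved for $\lambda=1$ --- so the ``integer multiples'' step is circular as written, and the ``telescoping/self-consistency check'' in your alternative route is not an argument. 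More fundamentally, nowhere do you use the defining property of $\v$ as the \emph{minimal-norm} element of $\cran(\Id-T)$, and without it the statement is false: for $T=\tfrac{1}{2}\Id$ on $\RR$ one has $(\Id-T)(1)=\tfrac{1}{2}$, yet $(\Id-T)(1-\lambda/2)\neq\tfrac{1}{2}$ for every $\lambda>0$. (Your convexity step and the passage from a segment to the full ray are fine once membership of the endpoints is secured.)

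The missing ingredient is the projection inequality. Since $\v=P_{\cran(\Id-T)}(\bzero)$ in the $M$-geometry, the Projection Theorem gives $\scal{\v}{\w-\v}\normM\ge 0$ for every $\w\in\cran(\Id-T)$. Now fix $\lambda\ge 0$ and set $\z:=\z_0-\lambda\v$. Because $T$ is firmly nonexpansive on $Z$ (\cref{prop:T:prop}\cref{prop:T:prop:ii}), so is $\Id-T$, hence
\[
\norm{(\Id-T)\z_0-(\Id-T)\z}\normM^2
\le \scal{\z_0-\z}{(\Id-T)\z_0-(\Id-T)\z}\normM
=\lambda\,\scal{\v}{\v-(\Id-T)\z}\normM\le 0,
\]
where the last inequality uses $(\Id-T)\z\in\ran(\Id-T)\subseteq\cran(\Id-T)$ together with the projection inequality. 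Therefore $(\Id-T)\z=(\Id-T)\z_0=\v$, i.e., $\z\in\fix(\v+T)$, which proves the claim directly for every $\lambda\ge 0$ with no need for the integer-to-continuum interpolation. Alternatively, your integer step can be repaired by applying \cref{fact:fejer} with $\c=\z_0$: Fej\'er monotonicity forces $\norm{\z_k+k\v-\z_0}\le\norm{\z_0-\z_0}=0$, whence $\z_k=\z_0-k\v$ and $T(\z_0-k\v)=\z_0-(k+1)\v$ for all $\knn$; but note that the minimality of $\v$ is still being used there, hidden inside the hypothesis of \cref{fact:fejer}.
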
	
		\begin{proof}
			See \cite[Proposition~2.5(i)]{BM2015}.	
		\end{proof}	
		
		\begin{fact}
			\label{fact:T:v}
			Let $\z_0\in Z$. Update via:
			$\z_{k+1}=T\z_k$. Then the following hold:
			\begin{enumerate}
				\item
				\label{fact:T:v:i}
				{\rm(Pazy)} $\frac{\z_k}{k}\to -\v$.
				\item
				\label{fact:T:v:ii}
				{\rm(Baillon--Bruck--Reich)} $\z_k-\z_{k+1}\to \v$.
			\end{enumerate}	
		\end{fact}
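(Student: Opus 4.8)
The plan is to derive both assertions from the classical asymptotic theory of firmly nonexpansive iterations in a Hilbert space, carried out in the space $Z$ of \cref{e:hilbertZ}. First I would assemble the ingredients already at hand. By \cref{prop:T:prop}\cref{prop:T:prop:ii}, $T\colon Z\to Z$ is firmly nonexpansive, hence nonexpansive and $\tfrac12$-averaged; by \cref{e:Id-tmm} together with, e.g., \cite[Corollary~21.14]{BC2017}, the set $\cran(\Id-T)$ is nonempty, closed, and convex, so the infimal displacement vector $\v$ of \cref{e:def:v:projm} is well defined and is the projection of $\bzero$ onto $\cran(\Id-T)$; equivalently, $\v$ is the unique element of minimal $M$-norm in $\cran(\Id-T)$.

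For \cref{fact:T:v:ii} I would invoke the Baillon--Bruck--Reich theorem on the asymptotic behaviour of averaged (here firmly nonexpansive) self-maps: along the orbit $\z_{k+1}=T\z_k$, the displacements $\z_k-\z_{k+1}=(\Id-T)\z_k$ converge \emph{strongly} to the element of minimal $M$-norm in $\cran(\Id-T)$, that is, to $\v$ by \cref{e:def:v:projm}; this is \cite[Corollary~2.3]{BBR78} (see also \cite[Proposition~2.5]{BM2015}). Part \cref{fact:T:v:i} can then be obtained either from Pazy's theorem directly or, more economically, deduced from \cref{fact:T:v:ii}: telescoping gives $\z_0-\z_k=\sum_{j=0}^{k-1}(\z_j-\z_{j+1})$, so $\tfrac1k(\z_0-\z_k)$ is the Ces\`aro mean of a strongly convergent sequence with limit $\v$, whence $\tfrac1k(\z_0-\z_k)\to\v$; since $\tfrac1k\z_0\to\bzero$, this gives $\tfrac{\z_k}{k}\to-\v$. (Alternatively, this is contained in \cite[Corollary~2.2]{BBR78}, consistent with its use earlier in the paper.)

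The one genuinely delicate point --- and the only place where more than plain nonexpansiveness is needed --- is the \emph{strong} convergence in \cref{fact:T:v:ii}: for a merely nonexpansive operator one obtains only weak convergence of $\z_k-\z_{k+1}$, and upgrading to norm convergence is exactly where the firmly nonexpansive (equivalently $\tfrac12$-averaged) structure of the PDHG operator furnished by \cref{prop:T:prop}\cref{prop:T:prop:ii} is indispensable. Everything else is routine bookkeeping with the $M$-inner product and with the projection characterization of $\v$.
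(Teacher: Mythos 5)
Your proposal is correct and follows essentially the same route as the paper, which simply cites Pazy's theorem for \cref{fact:T:v:i} and Baillon--Bruck--Reich for \cref{fact:T:v:ii}, relying on the firm nonexpansiveness of $T$ on $Z$ established in \cref{prop:T:prop}\cref{prop:T:prop:ii}. Your additional observation that \cref{fact:T:v:i} can be recovered from \cref{fact:T:v:ii} by a telescoping/Ces\`aro argument is a valid (and slightly more self-contained) alternative to invoking Pazy directly, but it does not change the substance of the argument.
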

		\begin{proof}
			\cref{fact:T:v:i}:
			See \cite{Pazy1970}.
			\cref{fact:T:v:ii}:
			See \cite{BBR78} or \cite{Br-Reich77}.
		\end{proof}
\begin{remark}
Some comments are in order.
\begin{enumerate}
\item \cref{fact:T:v}\cref{fact:T:v:i}\&\cref{fact:T:v:ii} reveal that when $\v\neq \bzero $ 
  PDHG is able to diagnose inconsistent problems.   
  Furthermore, as we shall see in \cref{lem:qp:infeasibility} and again in \cref{thm:esep}, $\v$ may also carry a certificate of inconsistency.  
 \item 
 On the other hand, if $\bzero\in\cran(\Id-T)\smallsetminus\ran(\Id-T)$
then \cref{fact:T:v}\cref{fact:T:v:i}\&\cref{fact:T:v:ii} do not tell whether the problem is inconsistent.
Alternatively, one could use 
\cite[Corollary~2.2]{BBR78} 
 and monitor the sequence $(\norm{\z_k})_\knn$.
\item In passing, we point out that in the applications we study in upcoming sections, we prove that
$\v\in \ran(\Id-T)$. More precisely, in
the QP case (see \cref{sec:qpstatic} below)
 we show that $\ran(\Id-T)$ is closed 
 and the inclusion follows, while in the case of the ellipsoid separation problem in \cref{sec:standconic}
 below
 we proved the inclusion directly.
\end{enumerate}
\end{remark}		
		We conclude this section with the following proposition, which further characterizes and establishes properties of $\v$ used later.
		\begin{proposition}
    \label{p:vobang}
		Recalling \cref{e:def:v:projm},	let $\overline{\w}\in \cran(\Id-T)$.
			Then 
			\begin{equation}
      \label{eq:vobang}
				\overline{\w}=\v
				\; \siff\; (\forall \y\in \cran (\partial F+S))
				\scal{\overline{\w}}{M\overline{\w}-\y}\le 0.
        \end{equation}
        In particular
        \begin{enumerate}
\item
\label{eq:vobang:i}
$(\forall \r\in (\overline{\dom f^*+\lin^* (\dom g^*)})) 
$
we have
$\scal{\vx}{\tfrac{1}{\sigma}\vx-\lin^*\vy-\r}\le 0$.
 \item
\label{eq:vobang:ii}
$(\forall \d\in 
(\overline{\dom g -\lin(\dom f)}))$
          we have
$\scal{\vy}{\tfrac{1}{\tau}\vy-\lin\vx-\d}\le 0$.
\end{enumerate}
		\end{proposition}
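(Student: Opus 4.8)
The plan is to recognize $\v$ as the metric projection of the origin onto $\cran(\Id-T)$ inside the Hilbert space $Z$, and then to unwind the classical variational characterization of that projection, translating it through the change of inner product $\scal{\cdot}{\cdot}\normM$ and through the identity $\cran(\Id-T)=M^{-1}(\cran(\partial F+S))$ supplied by \cref{prop:T:prop}\cref{prop:T:prop:v}.

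First I would record that $\cran(\Id-T)$ is a nonempty (it is the closure of $\ran(\Id-T)$, which is nonempty since $T\colon Z\to Z$ is everywhere defined) closed convex subset of $Z$; closedness and convexity come from \cref{e:Id-tmm} together with \cite[Corollary~21.14]{BC2017}. Hence \cref{e:def:v:projm} says precisely that $\v$ is the projection $P_{\cran(\Id-T)}\bzero$ taken with respect to $\norm{\cdot}\normM$. By the obtuse-angle characterization of projections onto closed convex sets (e.g.\ \cite[Theorem~3.16]{BC2017}), for $\overline{\w}\in\cran(\Id-T)$ one has $\overline{\w}=\v$ if and only if $\scal{\bzero-\overline{\w}}{\w-\overline{\w}}\normM\le 0$ for every $\w\in\cran(\Id-T)$. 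Expanding, $\scal{\bzero-\overline{\w}}{\w-\overline{\w}}\normM=\scal{\overline{\w}}{M\overline{\w}}-\scal{\overline{\w}}{M\w}$ in the ambient inner product of $X\times Y$ (using that $M$ is self-adjoint). As $\w$ ranges over $\cran(\Id-T)=M^{-1}(\cran(\partial F+S))$, the vector $\y:=M\w$ ranges over $\cran(\partial F+S)$ (this reparametrization is a bijection by \cref{prop:M}\cref{prop:M:i:ii} and \cref{prop:T:prop}\cref{prop:T:prop:v}), and $\scal{\overline{\w}}{M\w}=\scal{\overline{\w}}{\y}$. Substituting yields exactly the condition $\scal{\overline{\w}}{M\overline{\w}-\y}\le 0$ for all $\y\in\cran(\partial F+S)$, which is \cref{eq:vobang}.

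For the two ``in particular'' statements I would invoke \cref{thm:ranform} to write $\cran(\partial F+S)=\overline{(\dom f^*+\lin^*(\dom g^*))}\times\overline{(\dom g-\lin(\dom f))}$, and write $\v=(\vx,\vy)$, so that by the definition of $M$ in \cref{eq:def:M} we have $M\v=\big(\tfrac{1}{\sigma}\vx-\lin^*\vy,\ \tfrac{1}{\tau}\vy-\lin\vx\big)$. Since $\v\in\cran(\Id-T)=M^{-1}(\cran(\partial F+S))$, we have $M\v\in\cran(\partial F+S)$, so each of its two coordinates lies in the corresponding factor; in particular the domain-difference sets are nonempty. For \cref{eq:vobang:i}: given $\r\in\overline{(\dom f^*+\lin^*(\dom g^*))}$, set $\d:=\tfrac{1}{\tau}\vy-\lin\vx$, which lies in $\overline{(\dom g-\lin(\dom f))}$ by the above, so $(\r,\d)\in\cran(\partial F+S)$; feeding $\y=(\r,\d)$ into \cref{eq:vobang} with $\overline{\w}=\v$ gives $\scal{\vx}{\tfrac{1}{\sigma}\vx-\lin^*\vy-\r}+\scal{\vy}{\tfrac{1}{\tau}\vy-\lin\vx-\d}\le 0$, and the second block vanishes identically (it equals $\scal{\vy}{\bzero}$), leaving the claimed inequality. \cref{eq:vobang:ii} is the symmetric computation, taking $\y=\big(\tfrac{1}{\sigma}\vx-\lin^*\vy,\ \d\big)$ for $\d\in\overline{(\dom g-\lin(\dom f))}$.

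\textbf{Main obstacle.} There is no genuine difficulty here; the argument is essentially bookkeeping. The one place to be careful is keeping the two inner products straight — the projection inequality holds in $(Z,\scal{\cdot}{\cdot}\normM)$, but must be re-expressed in the ambient inner product of $X\times Y$ via self-adjointness of $M$ — and making sure that the map $\w\mapsto M\w$ really is a bijection between $\cran(\Id-T)$ and $\cran(\partial F+S)$, which is exactly \cref{prop:M}\cref{prop:M:i:ii} combined with \cref{prop:T:prop}\cref{prop:T:prop:v}. The nonemptiness of the domain-difference sets needed for (i)--(ii) is automatic, since $M\v$ already witnesses membership.
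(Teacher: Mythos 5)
Your proof is correct and follows essentially the same route as the paper's: both apply the Projection Theorem in the $M$-inner product, translate the variational inequality to the ambient inner product via self-adjointness and bijectivity of $M$ together with \cref{prop:T:prop}\cref{prop:T:prop:v}, and then derive \cref{eq:vobang:i}--\cref{eq:vobang:ii} by choosing the test point $(\r,\d)$ with one coordinate equal to the corresponding coordinate of $M\v$ so that one block of the sum vanishes. No gaps.
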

		\begin{proof}
			Indeed, let $\y\in \cran (\partial F+S)$.
   Recalling 
			\cref{prop:T:prop}\cref{prop:T:prop:v},
			it follows from the Projection Theorem 
			see, e.g., \cite[Theorem~3.16]{BC2017},
			that 
\begin{equation}
	     \overline{\w}=\v
\siff\
			\scal{\overline{\w}}{\overline{\w}-M^{-1}\y}_M\le 0
   \siff
	\scal{\overline{\w}}{M\overline{\w}-MM^{-1}\y}\le 0 
\siff
	   \scal{\overline{\w}}{M\overline{\w}-\y}\le 0. 
	\end{equation}  
      This verifies \cref{eq:vobang}.
Now, let $\r\in \overline{(\dom f^*+\lin^* (\dom g^*))}$
 and let $\d\in \overline{(\dom g -\lin(\dom f))}$.
 Then $\y:=(\r,\d)\in \cran (\partial F+S)$
 by \cref{thm:ranform}.
 This and \cref{eq:vobang} imply that 
 \begin{equation}
 \label{e:vobangi}
\scal{\vx}{\tfrac{1}{\sigma}\vx-\lin^*\vy-\r}+
\scal{\vy}{\tfrac{1}{\tau}\vy-\lin\vx-\d}\le 0.
\end{equation}
Finally, observe that \cref{e:def:v:projm} and 
\cref{prop:T:prop}\cref{prop:T:prop:v}
imply
\begin{equation}
\label{e:vobangii}
M\v=(\tfrac{1}{\sigma}\vx-\lin^*\vy,\tfrac{1}{\tau}\vy-\lin\vx)
\in \cran(\partial F+S).
\end{equation}
\cref{eq:vobang:i}:
Apply \cref{e:vobangi} with $\d$ replaced by $\tfrac{1}{\tau}\vy-\lin\vx$
in view of \cref{e:vobangii}.
\cref{eq:vobang:ii}:
Apply \cref{e:vobangi} with $\r$ replaced by $\tfrac{1}{\sigma}\vx-\lin^*\vy$
in view of \cref{e:vobangii}.
\end{proof}

\section{Specialization to conic problems}
  \label{sec:conicpdhg}
		From now on we assume that
		\begin{empheq}[box=\mybluebox]{equation}
			\text{$K$ is a closed convex cone in $Y$ and $\b\in Y$.}
		\end{empheq}	
		
		Consider the problem
		\begin{equation}
			\begin{array}{rl}
				\label{eq:c:problem}
				\minimize{\x\in X} & f(\x) \\
				\mbox{\rm subject to} &\lin\x-\b\in K.
			\end{array}
		\end{equation}
		Problem \cref{eq:c:problem} can be recast as 
		\begin{equation}
			\label{eq:qp:coneK}
			\minimize{\x\in X}\;\;  	f(\x)+\iota_{K}(\lin \x-\b).
		\end{equation}	
		We recover the PDHG framework by setting\footnote{Let $K\subseteq X$. Here and elsewhere 
  we use $K^\ominus $ to denote the \emph{polar cone} of 
  $K$ defined by $K^\ominus:=\menge{\u\in X}{\sup\scal{ K}{\u}\le 0}$.} 
		\begin{equation}
			\label{eq:ff^*:cone}
			\text{$g=\iota_{K}(\cdot-\b)$, hence $g^*=\iota_{K^\ominus}(\cdot)+\scal{\b}{\cdot}$}.
		\end{equation}
		It follows from \cite[Example 23.4~and~Proposition~23.17(ii)]{BC2017}
		that
		\begin{equation}
			\label{e:proxf*}
			\prox_{\tau g^*}=P_{K^\ominus}(\cdot-\tau \b).
		\end{equation}
		By combining \cref{e:pdhg:gen:update} and \cref{e:proxf*},
		the PDHG update to solve 
		\cref{eq:qp:coneK} is
		\begin{equation}
			\label{e:pdhg:CP:update:K}
			\begin{pmatrix}
				\x^+\\
				\y^+
			\end{pmatrix}=	T		
			\begin{pmatrix}
				\x\\
				\y
			\end{pmatrix}
			:=	\begin{pmatrix}
				\prox_{\sigma f}(\x - \sigma \lin^* \y)\\
				P_{K^\ominus}(\y+\tau \lin (2\x^+-\x)-\tau \b)
			\end{pmatrix}.
		\end{equation}
		
		\begin{remark}
			The following are special cases of \cref{eq:c:problem}.
			\begin{enumerate}
				\item
				Let $H\colon X\to X$ be linear, monotone, and self-adjoint, i.e., $H=H^*$
				and let $\c\in X$.
				The quadratic optimization problem
				\begin{equation}
					\label{e:qpform}
					\begin{array}{rl}
						\minimize{\x\in X} & \tfrac{1}{2}\scal{\x}{H\x}+\scal{\c}{\x} \\
						\mbox{\rm subject to} &\lin\x-\b\in K,
					\end{array}
				\end{equation}
				is a special case of \cref{eq:qp:coneK} 
				by setting
				\begin{equation}
					\label{e:def:gQP}
					f(\x)=\tfrac{1}{2}\scal{\x}{H\x}+\scal{\c}{\x}.
				\end{equation}
				It follows from \cite[Example~24.2]{BC2017}
				that
				\begin{equation}
					\label{e:proxs}
					\prox_{\sigma f}=J_{\sigma H}(\cdot-\sigma \c)\text{~where~} J_{\sigma H}:=(\Id+\sigma H)^{-1}.
				\end{equation}
The above formula uses standard notation for the \emph{resolvent}
  of an operator
  defined by $J_A:=(\Id+A)^{-1}$.			
				
				Let $(\x,\y)\in X\times Y$.
				By combining 
    \cref{e:pdhg:CP:update:K} and \cref{e:proxs},
				the PDHG update to solve 
				\cref{e:qpform} is
				\begin{equation}
					\label{e:impt:xup}
					\begin{pmatrix}
						\x^+\\
						\y^+
					\end{pmatrix}=	T
					\begin{pmatrix}
						\x\\
						\y
					\end{pmatrix}
					:=	\begin{pmatrix}
						J_{\sigma H}(\x-\sigma \lin^* \y-\sigma \c)\\
						P_{K^\ominus}(\y+\tau \lin(2\x^+-\x)-\tau \b)
					\end{pmatrix}.
				\end{equation}
				We point out that PDHG for standard QP is recovered by taking 
				$X=\RR^n$, $Y=\RR^m$ and $K=\R^m_-$.  This special case is the topic of \cref{sec:qpstatic}.
				\item 
				Let $C$ be a nonempty closed convex subset of $X$ and let $\c\in X$.
				The problem
				\begin{equation}
					\begin{array}{rl}
						\label{e:esform}
						\minimize{\x\in C} & \scal{\c}{\x} \\
						\mbox{\rm subject to} &\lin\x-\b\in K
					\end{array}
				\end{equation}
				is a special case of \cref{eq:qp:coneK} 
				by setting
				\begin{equation}
					\label{e:def:fcC}
					f(x)=\scal{\c}{\x}+\iota_C(\x).
				\end{equation}
				It follows from \cite[Example~24.2]{BC2017}
				that
				\begin{equation}
					\label{e:proxsC}
					\prox_{\sigma f}=P_C(\cdot-\sigma \c).
				\end{equation}
				Let $(\x,\y)\in X\times Y$.
				By combining \cref{e:pdhg:CP:update:K} and \cref{e:proxsC},
				the PDHG update to solve 
				\cref{e:esform} is
				\begin{equation}
					\label{e:impt:xup:es}
					\begin{pmatrix}
						\x^+\\
						\y^+
					\end{pmatrix}=	T
					\begin{pmatrix}
						\x\\
						\y
					\end{pmatrix}
					:=	\begin{pmatrix}
						P_C(\x-\sigma \lin^* \y-\sigma \c)\\
						P_{K^\ominus}(\y+\tau \lin(2\x^+-\x)-\tau \b)
					\end{pmatrix}.
				\end{equation}
    A further special case of \cref{e:esform} is obtained when $C$ is a cone and $K=\{\bzero\}$, yielding the problem commonly known as standard conic primal form.  We return to standard conic primal form in \cref{sec:standconic}.
			\end{enumerate}	
		\end{remark}

		\subsection{The infimal displacement vector in the conic case}
		\label{sec:infimal2}
  {In the remainder of this section, we develop additional properties of the infimal displacement vector for \cref{eq:c:problem}.}
		\begin{proposition}
			\label{lem:qp:v:K}
			Let $T$ be defined as in \cref{e:pdhg:CP:update:K}, let $\v$ be given by \cref{e:def:v:projm}, and let
			\begin{equation}
				\v=:(\vx,\vy) .
			\end{equation}
			Then the following hold: 
			\begin{enumerate}
				\item
				\label{lem:qp:K:v:iii}
				$-{\vy}=P_{K^\ominus}(-{\vy}-\tau \lin{\vx})$.
				\item
				\label{lem:qp:K:v:iv}
				$-\tau \lin{\vx}=P_{K}(-{\vy}-\tau \lin{\vx})$.
				\item
				\label{lem:qp:K:v:v}
				$ \scal{\lin{\vx}}{{\vy}}=0$.
				\item
				\label{lem:qp:K:v:vv}
				$\norm{\v}\normM^2=\norm{\v_R}^2/\sigma + \norm{\v_D}^2/\tau$.
				\item 
				\label{lem:C0:ii}
				$K=\{\bzero\} \RA \lin {\vx}=\bzero$, i.e., ${\vx}\in \ker \lin$.
			\end{enumerate}
			Suppose further that $f$ is defined as in 
			\cref{e:def:gQP}. Then \cref{e:pdhg:CP:update:K} reduces to \cref{e:impt:xup} and
			we additionally have:
			\begin{enumerate}[resume]
				\item
				\label{lem:qp:K:v:vi}
				$\lin^* {\vy}+H{\vx}=\bzero$.
				\item
				\label{lem:qp:K:v:vii}
				$\scal{{\vx}}{H{\vx}}=0$.
				\item 
				\label{lem:qp:K:v:ix}
				$J_{\sigma H}({\vx})={\vx}+\sigma J_{\sigma H}(\lin^*{\vy})$.
			\end{enumerate}
		\end{proposition}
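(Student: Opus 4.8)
The plan is to realize $\v$ as the Pazy limit of a PDHG orbit and then read off each identity by dividing the iteration by $k$ and letting $k\to\infty$. Fix any $\z_0\in Z$ and set $\z_{k+1}=T\z_k$, writing $\z_k=(\x_k,\y_k)$ with $T$ as in \cref{e:pdhg:CP:update:K}. By \cref{fact:T:v}\cref{fact:T:v:i} we have $\z_k/k\to-\v$; since $(k+1)/k\to1$, this also yields $\x_{k+1}/k\to-\vx$ and $\y_{k+1}/k\to-\vy$, and consequently $(2\x_{k+1}-\x_k)/k\to-\vx$ and $\tau\b/k\to\bzero$. The crucial observation is that $P_{K^\ominus}$, being the projection onto a closed convex cone, is positively homogeneous and (non)expansive, hence continuous. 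Therefore, dividing the $\y$-update $\y_{k+1}=P_{K^\ominus}\big(\y_k+\tau\lin(2\x_{k+1}-\x_k)-\tau\b\big)$ by $k$ gives $\y_{k+1}/k=P_{K^\ominus}\big(\y_k/k+\tau\lin((2\x_{k+1}-\x_k)/k)-\tau\b/k\big)$, and passing to the limit using continuity of $\lin$ and $P_{K^\ominus}$ produces $-\vy=P_{K^\ominus}(-\vy-\tau\lin\vx)$, which is \cref{lem:qp:K:v:iii}.

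\textbf{Items \cref{lem:qp:K:v:iv}--\cref{lem:C0:ii}.} For \cref{lem:qp:K:v:iv} and \cref{lem:qp:K:v:v} I would apply Moreau's conic decomposition to $\u:=-\vy-\tau\lin\vx$, namely $\u=P_K\u+P_{K^\ominus}\u$ with $\scal{P_K\u}{P_{K^\ominus}\u}=0$. By \cref{lem:qp:K:v:iii} the second summand is $P_{K^\ominus}\u=-\vy$, so the decomposition forces $P_K\u=\u-(-\vy)=-\tau\lin\vx$, i.e.\ \cref{lem:qp:K:v:iv}; the orthogonality relation then reads $\tau\scal{\lin\vx}{\vy}=0$, giving \cref{lem:qp:K:v:v} since $\tau>0$. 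Item \cref{lem:qp:K:v:vv} is a one-line computation: $\norm{\v}\normM^2=\scal{\v}{M\v}=\tfrac1\sigma\norm{\vx}^2+\tfrac1\tau\norm{\vy}^2-2\scal{\lin\vx}{\vy}$, and the cross term vanishes by \cref{lem:qp:K:v:v}. For \cref{lem:C0:ii}, when $K=\{\bzero\}$ we have $K^\ominus=Y$, so $P_{K^\ominus}=\Id$ and \cref{lem:qp:K:v:iii} collapses to $-\vy=-\vy-\tau\lin\vx$, whence $\lin\vx=\bzero$.

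\textbf{Quadratic specialization.} With $f$ as in \cref{e:def:gQP} we have $\prox_{\sigma f}=J_{\sigma H}(\cdot-\sigma\c)$ with $J_{\sigma H}=(\Id+\sigma H)^{-1}$, so the $\x$-update is equivalent to $(\Id+\sigma H)\x_{k+1}=\x_k-\sigma\lin^*\y_k-\sigma\c$. Dividing by $k$ and letting $k\to\infty$, using continuity of the bounded linear operators $H$ and $\lin^*$ together with $\z_k/k\to-\v$, yields $-\vx-\sigma H\vx=-\vx+\sigma\lin^*\vy$, i.e.\ $H\vx+\lin^*\vy=\bzero$, which is \cref{lem:qp:K:v:vi}. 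Then \cref{lem:qp:K:v:vii} follows from $\scal{\vx}{H\vx}=-\scal{\vx}{\lin^*\vy}=-\scal{\lin\vx}{\vy}=0$ by \cref{lem:qp:K:v:v}. For \cref{lem:qp:K:v:ix}, apply the injective operator $\Id+\sigma H$ to $\vx+\sigma J_{\sigma H}(\lin^*\vy)$: this equals $\vx+\sigma H\vx+\sigma\lin^*\vy=\vx$ by \cref{lem:qp:K:v:vi}, and $\vx=(\Id+\sigma H)(J_{\sigma H}\vx)$ as well, so injectivity of $\Id+\sigma H$ forces $J_{\sigma H}\vx=\vx+\sigma J_{\sigma H}(\lin^*\vy)$.

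\textbf{Main obstacle.} The only genuinely delicate step is the limit-passage in \cref{lem:qp:K:v:iii}: one must simultaneously exploit the positive homogeneity \emph{and} the continuity of the cone projection, and carefully verify that every term of the $k$-scaled recursion converges — the $\y$- and $\x$-terms via the Pazy limit combined with the harmless shift $(k+1)/k\to1$, the $2\x_{k+1}-\x_k$ combination to $-\vx$, and the $\tau\b$ term to $\bzero$. Once that identity is established, everything downstream — Moreau's decomposition, the norm computation, and the linear-algebra manipulations in the quadratic case — is routine bookkeeping.
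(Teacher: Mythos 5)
Your proposal is correct and follows essentially the same route as the paper: realize $\v$ via the Pazy limit of a PDHG orbit, pass to the limit in the $k$-scaled $\y$-update using positive homogeneity and continuity of $P_{K^\ominus}$, then derive the remaining items from Moreau's conic decomposition and the limit of the scaled $\x$-update. The minor differences (dividing by $k$ rather than $k+1$, deducing \cref{lem:C0:ii} from \cref{lem:qp:K:v:iii} instead of \cref{lem:qp:K:v:iv}, and using injectivity of $\Id+\sigma H$ in place of linearity of $J_{\sigma H}$ for \cref{lem:qp:K:v:ix}) are cosmetic.
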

		
		\begin{proof}
Let $(\x_0, \y_0)\in X\times Y $
  and let $((\x_{k},\y_{k}))_\knn$
  be the sequence obtained via the update 
  $(\forall \knn)$
  $(\x_{k+1},\y_{k+1})=T(\x_{k},\y_{k})$.
		Applying 	\cref{fact:T:v}\cref{fact:T:v:i}\&\cref{fact:T:v:ii}
		with $\z_0=(\x_0, \y_0)$
			in view of \cref{prop:T:prop}\cref{prop:T:prop:ii}
			implies
			\begin{equation}
				\label{e:sep:lim:i}
				{\x_k}-\x_{k+1}\to {\vx}	\;\;\text{and}\;\;{\y_k}-\y_{k+1}\to {\vy},
			\end{equation}	
			and 
			\begin{equation}
				\label{e:sep:lim:ii}
				\tfrac{\x_k}{k}\to -{\vx}	\;\;\text{and}\;\;\tfrac{\y_k}{k}\to -{\vy}.	
			\end{equation}

			\cref{lem:qp:K:v:iii}:	
			Because $K^\ominus$ is a cone
			we have
			$P_{K^\ominus}$ is positively homogeneous by \cite[Theorem~5.6(7)]{Deutsch}.
			Moreover,  $P_{K^\ominus}$ is (firmly) nonexpansive
   by, e.g., \cite[Proposition~4.16]{BC2017}, hence continuous.
			It follows from \cref{e:pdhg:CP:update:K}
			applied with $(\x,\y)$ replaced by $(\x_k,\y_k)$
			that 
			\begin{equation}
				\label{eq:PDHGyK:ii} 
				\y_{k+1} =P_{K^\ominus}(\y_k+\tau \lin(2\x_{k+1}-\x_k)-\tau \b).     
			\end{equation}
			Dividing both sides of \cref{eq:PDHGyK:ii} 
			by $k+1$ and taking the limit as $k\to \infty$
			in view of \cref{e:sep:lim:i}, \cref{e:sep:lim:ii} and \cref{eq:def:A}
			yield
			\begin{subequations}
				\begin{align}	
					-{\vy}\leftarrow\tfrac{\y_{k+1}}{k+1}
					&=\tfrac{1}{k+1}P_{K^\ominus}	(\y_k+\tau \lin(2\x_{k+1}-\x_k)-\tau \b)
					\\
					&=\tfrac{1}{k+1}P_{K^\ominus}	
					\Big(\y_k-\y_{k+1}+\y_{k+1}+\tau \lin \x_{k+1}+ \tau \lin(\x_{k+1}-\x_k)- \tau \b \Big)
					\\
					&=P_{K^\ominus}	
					\Big(\tfrac{\y_k-\y_{k+1}}{k+1}
					+\tfrac{\y_{k+1}}{k+1}+\tau \lin\tfrac{\x_{k+1}}{k+1}
					+ \tau \lin \Big(\tfrac{\x_k-\x_{k+1}}{k+1} \Big)-\tau \tfrac{\b}{k+1}\Big)
					\\
					&\to P_{K^\ominus}	(-{\vy}-\tau \lin{\vx}).
				\end{align}	
			\end{subequations}	
			This proves the claim.
\cref{lem:qp:K:v:iv}:	
			Indeed, using  
			\cref{lem:qp:K:v:iii}
			and the Moreau decomposition, see, e.g., 
			\cite[Theorem~6.30(i)]{BC2017}
			we have 
			\begin{subequations}
				\begin{align}
					P_{K}(-{\vy}-\tau \lin{\vx})
					&=-{\vy}-\tau \lin{\vx}-P_{K^\ominus}(-{\vy}- \tau \lin{\vx})
					\\
					&=-{\vy}-\tau \lin{\vx}-(-{\vy})=-\tau \lin{\vx}.
				\end{align}	
			\end{subequations}	
\cref{lem:qp:K:v:v}:	
			Combine  \cref{lem:qp:v:iii},  \cref{lem:qp:v:iv}
			and, e.g.,   \cite[Theorem~6.30(ii)]{BC2017}.
\cref{lem:qp:K:v:vv}:
			Indeed, recalling  \cref{lem:qp:K:v:v} we have 
			\begin{subequations}
				\begin{align}
					\norm{\v}^2\normM
					&=\scal{\v}{M\v}=\scal{({\vx},{\vy})}{({\vx}/\sigma-\lin^*{\vy}, {\vy}/\tau-\lin{\vx})}  
					\\
					&=\scal{{\vx}}{{\vx}/\sigma-\lin^*{\vy}}+\scal{{\vy}}{{\vy}/\tau-\lin{\vx}}
					\\
					&=\norm{{\vx}}^2/\sigma-\scal{{\vx}}{\lin^*{\vy}}+\norm{{\vy}}^2/\tau-\scal{{\vy}}{\lin {\vx}}
					\\
					&=\norm{{\vx}}^2/\sigma+\norm{{\vy}}^2/\tau.
				\end{align}   
			\end{subequations}
\cref{lem:C0:ii}:
			This is a direct consequence of \cref{lem:qp:K:v:iv}.
			\cref{lem:qp:K:v:vi}:	
			It follows from
			\cref{e:impt:xup} applied with $(\x,\y)$ replaced by 
			$(\x_k,\y_k)$
			that 
			\begin{equation}
				\x_k-\x_{k+1}	- \sigma \lin^*(\y_k-\y_{k+1})= \sigma \lin^* \y_{k+1}+ \sigma H\x_{k+1} + \sigma \c.
			\end{equation}	
			Dividing the above equation by $k+1$
			and taking the limit as $k\to \infty$
			in view of \cref{e:sep:lim:i},  
			\cref{e:sep:lim:ii} and \cref{eq:def:A} yield 
			\begin{subequations}
				\begin{align}
					\bzero\leftarrow &\quad\tfrac{1}{k+1}(	\x_k-\x_{k+1})	- \sigma \lin^*( \tfrac{1}{k+1}(\y_k-\y_{k+1}))
					\\
					&= \sigma \lin^*  \Big(\tfrac{\y_{k+1}}{k+1}\Big)+\sigma H\Big(\tfrac{\x_{k+1}}{k+1}\Big) + \sigma\left(\tfrac{\c}{k+1}\right)
					\to -\sigma \lin^* {\vy}-\sigma H{\vx}.
				\end{align}
			\end{subequations}
\cref{lem:qp:K:v:vii}:	
			It follows from  \cref{lem:qp:v:v} and \cref{lem:qp:K:v:vi}
			that $0=\scal{{\vx}}{\lin^* {\vy}+H{\vx}}=\scal{{\vx}}{H{\vx}}
			+\scal{{\vx}}{\lin^* {\vy}}
			=\scal{{\vx}}{H{\vx}}
			+\scal{\lin{\vx}}{ {\vy}}=\scal{{\vx}}{H{\vx}}$.
\cref{lem:qp:K:v:ix}:
			It follows from  \cref{lem:qp:K:v:vi}
			that $(\Id+\sigma H){\vx}={\vx}-\sigma \lin^*{\vy}$. Hence,
			${\vx}=J_{\sigma H}({\vx}-\sigma \lin^*{\vy})=J_{\sigma H}{\vx}-\sigma J_{\sigma H} ( \lin^*{\vy})$
			as claimed.
		\end{proof}	

		\section{Application to QP}
		\label{sec:qpstatic}
		In this section we set
		\begin{empheq}[box=\mybluebox]{equation}
			X=\RR^n,\; Y=\RR^m, K=\RR^m_{-},
		\end{empheq}
		and 
		\begin{empheq}[box=\mybluebox]{equation}
  \label{eq:remind}
			\text{$T$ is defined as in \cref{e:impt:xup}.}
		\end{empheq}

This means that \cref{e:qpform} specializes to
\be
\begin{array}{rl}
\minimize{\x\in \RR^n} & \frac{1}{2}\langle \x,H\x\rangle + \langle \c,\x\rangle \\
\mbox{\rm subject to} & \lin\x\le \b,
\end{array}
\label{eq:qpprim}
\ee
whose Lagrangian dual is well known to be
\be
\begin{array}{rl}
\maximize{\q\in \RR^n,\y\in \RR^m} & -\frac{1}{2}\langle \q, H\q\rangle-\langle \b,\y\rangle \\
\mbox{\rm subject to} & H\q=-(\c+\lin^*\y), \\
& \y\ge\bz.
\end{array}
\label{eq:qpdu}
\ee
In view of \cref{e:proxs}, 
PDHG for QP is efficient in practice only if a fast method is available 
to solve the equations implicit in \cref{e:impt:xup}, 
e.g., if $\Id+\sigma H $ has a sparse Cholesky factorization. 
Unlike, e.g., interior-point methods, the coefficient matrix of the 
linear system to be solved does not vary with iteration counter $k$.

		\subsection{PDHG for QP: Static properties}
  We start with the following key lemma.
		\begin{lemma}
			\label{lem:ran:phd:v}
			The following hold:
			\begin{enumerate}
				\item
				\label{lem:ran:phd:v:i}
				$\partial F+S$ is a \emph{polyhedral multifunction}\footnote{Following
					\cite{Robin},
					we say that 
					a (possibly) set-valued mapping $A\colon X\rras X$ 
					is a polyhedral multifunction if $\gra A$
					is a union of finitely many polyhedral subsets of $X\times X$.}.
     		\item
				\label{lem:ran:phd:v:0:i}
    		$\ran(\partial F+S)$
				is a union of finitely many polyhedral\footnote{Let $C\subseteq X$.
					We say that $C$ is \emph{polyhedral}
					if $C$ is the intersection of finitely
					many halfspaces.} sets.
     \item 
     \label{lem:ran:phd:v:0:ii}
     				$\ran(\partial F+S)$ is closed and convex.
     	      \item
            \label{lem:ran:phd:v:i:i}
                $
                \ran(\partial F+S)
                =(\ran H+\lin^*(\RR^m_+)+\c)\times (\RR^m_-+\ran \lin+\b)$.
                   \item
            \label{lem:ran:phd:v:i:ii}
                $\ran(\partial F+S)$ is polyhedral. 
				
    			\item 
				\label{lem:ran:phd:v:iv}
				$\ran(\Id-T)=M^{-1}((\ran H+\lin^*(\RR^m_+)+\c)\times (\RR^m_-+\ran \lin+\b))$.
				\item 
				\label{lem:ran:phd:v:iii}
				$\ran(\Id-T)$ is polyhedral.
				Hence, $\ran(\Id-T)$ is convex and closed.

			\end{enumerate}
		\end{lemma}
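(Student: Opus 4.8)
The plan is to establish the seven items in sequence, each feeding into the next. First I would prove \cref{lem:ran:phd:v:i}: since the quadratic $f$ of \cref{e:def:gQP} has $\partial f = H(\cdot)+\c$, an affine single-valued map, $\gra\partial f$ is an affine subspace of $\RR^n\times\RR^n$, hence polyhedral; and since $K=\RR^m_-$ has $K^\ominus=\RR^m_+$, we get $g^*=\iota_{\RR^m_+}+\langle\b,\cdot\rangle$ and $\partial g^*=N_{\RR^m_+}+\b$, where $\gra N_{\RR^m_+}$ is the union over $I\subseteq\{1,\dots,m\}$ of the polyhedral cones $\{(\y,\w):\y_i=0,\ \w_i\le 0\ (i\in I);\ \y_i\ge 0,\ \w_i=0\ (i\notin I)\}$, so that $N_{\RR^m_+}$ is a polyhedral multifunction and translating its output values by the fixed vector $\b$ preserves this. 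Hence $\gra\partial F=\gra(\partial f\times\partial g^*)$ is, up to a coordinate permutation, a finite union of polyhedral sets, and composing with the linear bijection $(\z,\w)\mapsto(\z,\w+S\z)$ of $(X\times Y)^2$ shows the graph of $\partial F+S$ is again a finite union of polyhedral sets.

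I expect \cref{lem:ran:phd:v:0:i} and \cref{lem:ran:phd:v:0:ii} to follow quickly. The set $\ran(\partial F+S)$ is the image of $\gra(\partial F+S)$ under the projection onto the second factor; projections of polyhedral sets are polyhedral and commute with finite unions, which gives \cref{lem:ran:phd:v:0:i}. Each polyhedral set is closed, so this finite union is closed, whence $\ran(\partial F+S)=\cran(\partial F+S)$; and $\cran(\partial F+S)$ is convex because $\partial F+S$ is maximally monotone by \cref{e:mm:sumSF} and the closure of the range of a maximally monotone operator is convex by \cite[Corollary~21.14]{BC2017}. This gives \cref{lem:ran:phd:v:0:ii}.

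With \cref{lem:ran:phd:v:0:ii} in hand, \cref{lem:ran:phd:v:i:i} should reduce to substituting the relevant domains into \cref{thm:ranform}: here $\dom f=\RR^n$, so $\lin(\dom f)=\ran\lin$; $\dom g=\RR^m_-+\b$; $\dom g^*=\RR^m_+$; and $\dom f^*=\ran H+\c$ by the standard formula for the conjugate of a convex quadratic, using the orthogonal decomposition $\RR^n=\ran H\oplus\ker H$ that comes from $H=H^*$. Inserting these into \cref{thm:ranform} and using $-\ran\lin=\ran\lin$ yields
\[
\ran(\partial F+S) = \overline{\ran H+\lin^*(\RR^m_+)+\c}\;\times\;\overline{\RR^m_-+\ran\lin+\b}.
\]
Each set under a closure is a Minkowski sum of a subspace, a polyhedral cone, and a singleton, hence polyhedral and in particular closed, so the bars may be deleted, which is \cref{lem:ran:phd:v:i:i}; then \cref{lem:ran:phd:v:i:ii} follows because a Cartesian product of polyhedral sets is polyhedral. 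Finally, \cref{prop:T:prop}\cref{prop:T:prop:iv} gives $\ran(\Id-T)=M^{-1}(\ran(\partial F+S))$, and substituting \cref{lem:ran:phd:v:i:i} yields \cref{lem:ran:phd:v:iv}; since $M^{-1}$ is a linear bijection by \cref{prop:M}\cref{prop:M:i:ii} and linear images of polyhedral sets are polyhedral, \cref{lem:ran:phd:v:i:ii} propagates to \cref{lem:ran:phd:v:iii}, and polyhedrality entails closedness and convexity.

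The hardest part will be \cref{lem:ran:phd:v:i}: one must be careful that the class of polyhedral multifunctions (in the sense of \cite{Robin}) is stable under the operations used here --- Cartesian product, translation of graphs, and addition of a linear operator --- and verify directly that $N_{\RR^m_+}$ is a polyhedral multifunction. A secondary point needing care is the identification $\dom f^*=\ran H+\c$ together with the check that every Minkowski sum appearing in the formula is genuinely polyhedral, so that the closure operations inherited from \cref{thm:ranform} can be discarded. Everything else is routine bookkeeping on top of \cref{thm:ranform} and \cref{prop:T:prop}.
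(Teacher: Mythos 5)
Your proposal is correct and follows essentially the same route as the paper: project the polyhedral graph to get \cref{lem:ran:phd:v:0:i}, combine closedness with maximal monotonicity for \cref{lem:ran:phd:v:0:ii}, substitute the domains $\dom f=\RR^n$, $\dom f^*=\ran H+\c$, $\dom g=\RR^m_-+\b$, $\dom g^*=\RR^m_+$ into \cref{thm:ranform}, and push everything through $M^{-1}$ via \cref{prop:T:prop}\cref{prop:T:prop:iv}. The only difference is cosmetic: for \cref{lem:ran:phd:v:i}, the paper simply cites Robinson's example, whereas you unpack that citation into a direct verification via the $2^m$ polyhedral pieces of $\gra N_{\RR^m_+}$ and stability under products, graph translation, and addition of $S$ --- a valid and self-contained substitute.
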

		\begin{proof}
			\cref{lem:ran:phd:v:i}:
			It follows from \cite[Example~on~page~207]{Robin}
			that $\partial F+S$ is a polyhedral multifunction.
			That is
			$\gra(\partial F+S)=P_1\cup\ldots\cup P_l$
			and $(\forall i\in \{1,\dots,l\})$ $P_i$
			is polyhedral.
   \cref{lem:ran:phd:v:0:i}:
   Observe that 
			$\ran (\partial F+S)$
			is the canonical projection of 
			$\gra (\partial F+S)$ onto $\RR^n\times \RR^m$.
			It follows from  \cite[Theorem~19.3]{Rock70}
			that $\ran (\partial F+S)$ is  a finite union of 
			polyhedral sets.
   \cref{lem:ran:phd:v:0:ii}:
   The claim of closedness is a consequence of \cref{lem:ran:phd:v:0:i}.
   The convexity of $\ran (\partial F+S)$ follows from combining 
   \cref{e:mm:sumSF},  \cite[Corollary~21.14]{BC2017} and the closedness of $\ran (\partial F+S)$.
\cref{lem:ran:phd:v:i:i}:
			It follows from
			\cref{e:def:gQP} and \cref{eq:ff^*:cone}
			that $\dom f=\RR^n$, $\dom f^*=\ran H+\c$,
			$\dom g=\RR^m_{-}+\b$ and  $\dom g^*=\RR^m_{+}$.
			Now combine this  with \cref{thm:ranform}
   in view of \cref{lem:ran:phd:v:0:ii}.
   \cref{lem:ran:phd:v:i:ii}:
   Combine \cref{lem:ran:phd:v:i:i} and 
   \cite[Theorem~19.3~and~Corollary~19.3.2]{Rock70}.
   \cref{lem:ran:phd:v:iv}:
   Combine \cref{prop:T:prop}\cref{prop:T:prop:iv}
    and \cref{lem:ran:phd:v:i:i}.
\cref{lem:ran:phd:v:iii}:
			It follows from \cref{lem:ran:phd:v:iv}
    and  \cite[Theorem~19.3]{Rock70}, in view of \cref{lem:ran:phd:v:i:i},
			that $\ran(\Id-T)$ is polyhedral, hence it is closed and convex.
		\end{proof}
	\begin{remark}[{\bf when $K$ is a general polyhedral cone}]
 \label{rem:polyhedK}
One easily checks that the proof 
of \cref{lem:ran:phd:v} generalizes seamlessly 
if we replace $\RR^m_{-}$
by a general polyhedral cone $K$.
In this case
we have $  \ran(\partial F+S)
                =(\ran H+\lin^*(K^\ominus)+\c)\times (K+\ran \lin+\b)$
and 
 $  \ran(\Id -T)
                =M^{-1}((\ran H+\lin^*(K^\ominus)+\c)\times (K+\ran \lin+\b))$.
\end{remark}
		Finally, we set
		\begin{empheq}[box=\mybluebox]{equation}
			\label{e:def:v:Pm}
			\v=(\vx,\vy)
			=\argmin_{\w\in \cran(\Id-T)}\norm{\w}\normM\in \ran (\Id-T),
		\end{empheq}
		where the inclusion follows from \cref{lem:ran:phd:v}\cref{lem:ran:phd:v:iii}.
		
		\begin{lemma}
			\label{lem:qp:v}
			The following hold:
			\begin{enumerate}
				\item
				\label{lem:qp:v:iii}
				$-{\vy}=P_{\RR^m_{+}}(-{\vy}-\tau \lin{\vx})$.
				\item
				\label{lem:qp:v:iv}
				$-\tau \lin{\vx}=P_{\RR^m_{-}}(-{\vy}-\tau \lin{\vx})$.
				\item
				\label{lem:qp:v:v}
				$ \scal{\lin {\vx}}{{\vy}}=0$.
				\item
				\label{lem:qp:v:vi}
				$\lin^\ast {\vy}+H{\vx}=\bzero$.
				\item
				\label{lem:qp:v:vii}
				$H{\vx}=\bzero$, i.e., ${\vx}\in \ker H$.
				\item
				\label{lem:qp:v:viii}
				$\lin^\ast {\vy}=0$.  
				\item 
				\label{lem:qp:v:ix}
				$J_{\sigma H}({\vx})=(\Id+\sigma H)^{-1}( {\vx})={\vx}$.
				\item
				\label{lem:qp:v:x}
				Let $i\in \{1, \ldots, m\}$.
				If $(\lin{\vx})_i>0$ then $({\vy})_i=0$.
				\item
				\label{lem:qp:v:xi}
				Let $i\in \{1, \ldots, m\}$.
				If $({\vy})_i<0$ then $(\lin {\vx})_i=0$.
				
			\end{enumerate}
		\end{lemma}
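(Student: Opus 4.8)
The plan is to read items \cref{lem:qp:v:iii}--\cref{lem:qp:v:vi} directly off \cref{lem:qp:v:K}, and then to derive \cref{lem:qp:v:vii}--\cref{lem:qp:v:xi} by elementary linear algebra together with a sign argument. First I would record the setup: $K=\RR^m_{-}$ is a nonempty closed convex cone whose polar cone is $K^\ominus=\RR^m_{+}$; the operator $T$ of \cref{e:impt:xup} is exactly the operator \cref{e:pdhg:CP:update:K} specialized with $f$ as in \cref{e:def:gQP}; and the vector $\v$ of \cref{e:def:v:Pm} coincides with the infimal displacement vector of \cref{e:def:v:projm}. Hence \cref{lem:qp:v:K} applies, including its ``suppose further'' conclusions because $f$ is quadratic, and substituting $K^\ominus=\RR^m_{+}$ and $K=\RR^m_{-}$ turns \cref{lem:qp:v:K}\cref{lem:qp:K:v:iii}, \cref{lem:qp:K:v:iv}, \cref{lem:qp:K:v:v}, \cref{lem:qp:K:v:vi} into \cref{lem:qp:v:iii}, \cref{lem:qp:v:iv}, \cref{lem:qp:v:v}, \cref{lem:qp:v:vi}, respectively.

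Next I would handle \cref{lem:qp:v:vii}: \cref{lem:qp:v:K}\cref{lem:qp:K:v:vii} gives $\scal{\vx}{H\vx}=0$, and since $H$ is monotone and self-adjoint it is positive semidefinite, so $0=\scal{\vx}{H\vx}=\norm{H^{1/2}\vx}^2$ forces $H^{1/2}\vx=\bzero$ and therefore $H\vx=\bzero$, i.e.\ $\vx\in\ker H$ (in finite dimensions one may alternatively diagonalize $H$). Then \cref{lem:qp:v:viii} is immediate from \cref{lem:qp:v:vi}, since $\lin^*\vy=-H\vx=\bzero$, and \cref{lem:qp:v:ix} follows because $H$ is linear: $(\Id+\sigma H)\vx=\vx+\sigma H\vx=\vx$, hence $J_{\sigma H}(\vx)=(\Id+\sigma H)^{-1}(\vx)=\vx$; alternatively one can invoke \cref{lem:qp:v:K}\cref{lem:qp:K:v:ix} together with $J_{\sigma H}(\bzero)=\bzero$.

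Finally, for \cref{lem:qp:v:x} and \cref{lem:qp:v:xi} I would combine the cone memberships forced by \cref{lem:qp:v:iii} and \cref{lem:qp:v:iv} with the orthogonality \cref{lem:qp:v:v}: from \cref{lem:qp:v:iv}, $-\tau\lin\vx\in\RR^m_{-}$ so (as $\tau>0$) $\lin\vx\ge\bzero$ componentwise, and from \cref{lem:qp:v:iii}, $-\vy\in\RR^m_{+}$ so $\vy\le\bzero$ componentwise; hence every summand of $\scal{\lin\vx}{\vy}=\sum_{i=1}^m(\lin\vx)_i(\vy)_i$ is $\le 0$, while \cref{lem:qp:v:v} says the sum is $0$, so $(\lin\vx)_i(\vy)_i=0$ for every $i$, which yields both \cref{lem:qp:v:x} and \cref{lem:qp:v:xi}. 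I do not anticipate a genuine obstacle; the only step carrying any content is deducing $H\vx=\bzero$ from $\scal{\vx}{H\vx}=0$, which is routine once positive semidefiniteness of $H$ is used, and the main care needed is in checking that all hypotheses of \cref{lem:qp:v:K} are met and in keeping the polar-cone and componentwise-sign conventions straight.
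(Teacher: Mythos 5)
Your proposal is correct and follows essentially the same route as the paper: specialize \cref{lem:qp:v:K} to $K=\RR^m_-$ for items \cref{lem:qp:v:iii}--\cref{lem:qp:v:vi} and \cref{lem:qp:v:ix}, use positive semidefiniteness of $H$ to upgrade $\scal{\vx}{H\vx}=0$ to $H\vx=\bzero$, and obtain the complementarity items \cref{lem:qp:v:x}--\cref{lem:qp:v:xi} from the sign and orthogonality conditions. The details you supply (the $H^{1/2}$ argument and the componentwise sign argument) are exactly the steps the paper leaves implicit.
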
	
		\begin{proof}
			We apply \cref{lem:qp:v:K}
			with $(X,Y, K)$ replaced by $(\RR^n,\RR^m, \RR^m_{-})$.
\cref{lem:qp:v:iii}--\cref{lem:qp:v:vi}:
			This follows from  \cref{lem:qp:v:K}\cref{lem:qp:K:v:iii}--\cref{lem:qp:K:v:vi}.
\cref{lem:qp:v:vii}:
			This follows from \cref{lem:qp:v:K}\cref{lem:qp:K:v:vii}
			and the positive semidefiniteness of $H$
   in view of \cref{lem:qp:v:vi}. 
\cref{lem:qp:v:viii}:	
			Combine \cref{lem:qp:v:vi} and \cref{lem:qp:v:vii}.
			\cref{lem:qp:v:ix}:	
			This is a direct consequence of
			\cref{lem:qp:v:K}\cref{lem:qp:K:v:ix}
			and \cref{lem:qp:v:viii}.
\cref{lem:qp:v:x}\&\cref{lem:qp:v:xi}:
			Combine \cref{lem:qp:v:iii}, \cref{lem:qp:v:iv}
			and \cref{lem:qp:v:v}.
		\end{proof}
  
We conclude this section with \cref{ex:vcalc} 
below.
We first prove the following auxiliary example.
\begin{example}
\label{ex:preqpex}
Suppose that $m=4$, $n=2$,
$\c=(1,-2)\tran$, $\b=(-2,1,0,0)\tran$
 and 
$\lin=\begin{psmallmatrix}
    -1&1\\
    1&-1\\
    -1&0\\
    0&-1
\end{psmallmatrix}$.
Let $\u=-0.15\cdot (1,1)\tran$, let $\w=-0.15\cdot (1,1,0,0)\tran$,
and we set $\sigma=\tau=0.3$.
Let $\r\in \lin\tran (\RR^4_+)+\c$
 and let
 $\d\in \RR^4_{-}+\ran \lin+\b$.
Then the following hold:
\begin{enumerate}
\item
\label{ex:preqpex:0}
$\scal{\w}{\lin \u}=0$.
\item
\label{ex:preqpex:i}
$\norm{\lin}=\sqrt{5}$. 
Hence, $\sigma\tau\norm{A}^2=0.45<1$.
\item 
\label{ex:preqpex:ii}
$\tfrac{1}{\sigma}\u\in \lin\tran (\RR^4_+)+\c$.
\item 
\label{ex:preqpex:iii}
$\tfrac{1}{\tau}\w-\lin \u\in \RR^4_{-}+\ran \lin+\b$.
\item 
\label{ex:preqpex:iv}
$\tfrac{1}{\tau}\w\in \RR^4_{-}+\ran \lin+\b$.
\item 
\label{ex:preqpex:v}
$\scal{\u}{\tfrac{1}{\sigma}\u-\r}\le 0$.
\item 
\label{ex:preqpex:vi}
$\scal{\w}{\tfrac{1}{\tau}\w-\d}\le 0$.
\item 
\label{ex:preqpex:vii}
$\scal{\w}{\tfrac{1}{\tau}\w-\lin \u-\d}\le 0$.
\end{enumerate}
\end{example}
\begin{proof}
\cref{ex:preqpex:0}:
This is clear.
\cref{ex:preqpex:i}:
Indeed, a direct calculation yields that
$\lin\tran \lin
=
\begin{psmallmatrix}
    3&-2
    \\
    -2&3
\end{psmallmatrix}$.
Hence $\norm{\lin}^2=\norm{\lin\tran \lin}=\lambda_{\rm{max}} (\lin\tran \lin)=5$.
\cref{ex:preqpex:ii}:
Indeed, let $\x=(1.5,0,0,0)\tran\in \RR^4_+$
 and observe that 
 $\tfrac{1}{\sigma}\u=-0.5\cdot(1,1)\tran
 =\lin\tran \x+\c
 \in \lin\tran(\RR^4_+)+\c$.
\cref{ex:preqpex:iii}:
Indeed, let $\y=(-2.5,-1)\tran$
and let $\z=(0,0,-2.65,-1.15)\tran\in \RR^4_{-}$
 and observe that
 $\tfrac{1}{\tau}\w-\lin \u
 =-(0.5,0.5,0.15,0.15)\tran
 =\z+\lin \y+\b\in \RR^4_{-}+\ran \lin +\b$.
\cref{ex:preqpex:iv}:
Indeed, let $\y=(-2.5,-1)\tran$
and let $\z=(0,0,-2.5,-1)\tran\in \RR^4_{-}$
 and observe that
 $\tfrac{1}{\tau}\w
 =(-0.5,-0.5,0,0)\tran
 =\z+\lin \y+\b\in \RR^4_{-}+\ran \lin +\b$.
\cref{ex:preqpex:v}:
Indeed, let 
$\p=(\alpha,\beta,\gamma,\delta)\tran\in \RR^4_{+}$
be such that $\r=\lin\tran \p+\c$.
Then 
$\r=(-\alpha+\beta-\gamma+1, \alpha-\beta-\delta-2)\tran$.
Thus,
\begin{subequations}
    \begin{align}
        \scal{\u}{\tfrac{1}{\sigma}\u-\r}
        &
        =-0.15\scal{(1,1)}{(-0.5,-0.5)-(-\alpha+\beta-\gamma+1, \alpha-\beta-\delta-2)}
        \\
        &=-0.15(-0.5+\alpha-\beta+\gamma-1-0.5-\alpha+\beta+\delta+2)
        \\
        &=-0.15(\gamma+\delta)\le 0.
    \end{align}
\end{subequations}
\cref{ex:preqpex:vi}:
Indeed, let 
$\q=(\alpha,\beta,\gamma,\delta)\tran\in \RR^4_{-}$
 and let $\y=(\rho,\eta)\in \RR^2$
be such that $\d= \q+\lin \y+\b$.
Then $\d=(\alpha-\rho+\eta-2,\beta+\rho-\eta+1,\gamma-\rho,\delta-\eta)\tran$.
Therefore,
\begin{subequations}
    \begin{align}
        &\quad \scal{\w}{\tfrac{1}{\tau}\w-\d}
        \\
        &
        =-0.15\scal{(1,1,0,0)}{(-0.5,-0.5,0,0)-
        (\alpha-\rho+\eta-2,\beta+\rho-\eta+1,\gamma-\rho,\delta-\eta)}
        \\
        &=-0.15(-0.5-\alpha+\rho-\eta+2
        -0.5-\beta-\rho+\eta-1)
        =0.15(\alpha+\beta)\le 0.
    \end{align}
\end{subequations}
\cref{ex:preqpex:vii}:
Combine \cref{ex:preqpex:vi} and \cref{ex:preqpex:0}.
\end{proof}

\begin{example}
    \label{ex:vcalc}
    Recalling \cref{eq:qpprim},
    let $H$ be a $2\times 2$  matrix and 
    let $\lin$, $\b$, $\c$ be given as in
    \cref{ex:preqpex}.
    In view of \cref{ex:preqpex}\cref{ex:preqpex:i},
     we set $\sigma=\tau=0.3$.
    Recalling \cref{e:def:v:Pm}, the following hold.
    \begin{enumerate}
        \item 
        \label{ex:vcalc:i}
        Suppose that 
        $H=0$. Then 
        $\vx=-0.15\cdot(1,1)\tran$ and  $\vy=-0.15\cdot(1,1,0,0)\tran$.
        \item 
        \label{ex:vcalc:ii}
        Suppose that 
        $H=\Id$. Then 
        $\vx=(0,0)\tran$ and  $\vy=-0.15(1,1,0,0)\tran$.
    \end{enumerate}
\end{example}
\begin{proof}
We set  $\u=-0.15\cdot {(1,1)}\tran$ 
and  set $\w=-0.15\cdot(1,1,0,0)\tran$.
\cref{ex:vcalc:i}:
Observe that $\ran H=\{\bzero\}$, hence
\cref{lem:ran:phd:v}\cref{lem:ran:phd:v:i:i}
yields
$\ran(\partial F+S)=(\lin\tran(\RR^4_+)+\c)\times (\RR^4_-+\ran \lin+\b)$.
On the one hand, it follows from 
\cref{ex:preqpex}\cref{ex:preqpex:ii}\&\cref{ex:preqpex:iii}
 and 
 \cref{lem:ran:phd:v}\cref{lem:ran:phd:v:i:i}
that 
$M(\u,\w)=(\tfrac{1}{\sigma}\u, \tfrac{1}{\tau}\w-\lin \u) \in \ran(\partial F+S)=(\lin\tran(\RR^4_+)+\c)\times (\RR^4_-+\ran \lin+\b)$. 
Equivalently,
$(\u,\w)\in M^{-1}(\ran(\partial F+S))=\ran(\Id-T)$
by  \cref{lem:ran:phd:v}\cref{lem:ran:phd:v:iv}.
On the other hand,
\cref{ex:preqpex}\cref{ex:preqpex:v}\&\cref{ex:preqpex:vii}
implies that 
$(\forall (\r,\d)\in \ran (\partial F+S))$
we have $\scal{(\u,\w)}{M(\u,\w)-(\r,\d)}\le 0$.
Altogether in view of 
\cref{eq:vobang} this
yields $(\vx,\vy)=(\u,\w)$.

\cref{ex:vcalc:ii}:
Observe that $\ran H=\RR^2$, hence
\cref{lem:ran:phd:v}\cref{lem:ran:phd:v:i:i}
yields
$\ran(\partial F+S)=\RR^2\times (\RR^4_-+\ran \lin+\b)$.
On the one hand, it follows from 
\cref{ex:preqpex}\cref{ex:preqpex:ii}\&\cref{ex:preqpex:iii}
 and 
 \cref{lem:ran:phd:v}\cref{lem:ran:phd:v:i:i}
that 
$M(\bzero,\w)=(\bzero, \tfrac{1}{\tau}\w) \in \ran(\partial F+S)=\RR^2\times (\RR^4_-+\ran \lin+\b)$. 
Equivalently,
$(\bzero,\w)\in M^{-1}(\ran(\partial F+S))=\ran(\Id-T)$
by  \cref{lem:ran:phd:v}\cref{lem:ran:phd:v:iv}.
On the other hand,
\cref{ex:preqpex}\cref{ex:preqpex:vi}
implies that 
$(\forall (\r,\d)\in \ran (\partial F+S))$
we have $\scal{(\bzero,\w)}{M(\bzero,\w)-(\r,\d)}
=\scal{\w}{\tfrac{1}{\tau}\w-\d}
\le 0$.
Altogether in view of \cref{eq:vobang} this yields $(\vx,\vy)=(\bzero,\w)$.
\end{proof}
            
            \subsection{Detecting inconsistency of QP} 

   We start with the following useful lemma.          
            \begin{lemma}
            \label{lem:incondet}
            Let $\r\in\ran H+\lin^*(\RR^m_+)+\c$
            and let $\d\in\RR^m_-+\ran \lin+\b$. Then the following hold:
\begin{enumerate}
\item
\label{lem:incondet:i}
$\scal{\vx}{\tfrac{1}{\sigma}\vx-\r}\le 0$.
 \item
\label{lem:incondet:ii}
$\scal{\vy}{\tfrac{1}{\tau}\vy-\d}\le 0$.
\end{enumerate}
In particular, we have 
\begin{enumerate}[resume]
\item
\label{lem:incondet:iii}
$\scal{\vx}{\tfrac{1}{\sigma}\vx-\c}\le 0$.
 \item
\label{lem:incondet:iv}
$\scal{\vy}{\tfrac{1}{\tau}\vy-\b}\le 0$.
\end{enumerate}
\end{lemma}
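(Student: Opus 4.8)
The plan is to obtain \cref{lem:incondet} as a direct specialization of \cref{p:vobang} to the QP setting, the only extra ingredient being the orthogonality relation of \cref{lem:qp:v}\cref{lem:qp:v:v}. First I would recall, as in the proof of \cref{lem:ran:phd:v}\cref{lem:ran:phd:v:i:i}, that \cref{e:def:gQP} and \cref{eq:ff^*:cone} give $\dom f = \RR^n$, $\dom f^* = \ran H + \c$, $\dom g = \RR^m_{-} + \b$ and $\dom g^* = \RR^m_{+}$ in the present setting. Hence $\dom f^* + \lin^*(\dom g^*) = \ran H + \lin^*(\RR^m_{+}) + \c$, and, since $\ran\lin$ is a linear subspace (so that $-\ran\lin = \ran\lin$), $\dom g - \lin(\dom f) = \RR^m_{-} + \b - \ran\lin = \RR^m_{-} + \ran\lin + \b$. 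Therefore the hypothesis $\r \in \ran H + \lin^*(\RR^m_{+}) + \c$ is exactly $\r \in \dom f^* + \lin^*(\dom g^*) \subseteq \overline{(\dom f^* + \lin^*(\dom g^*))}$, and similarly $\d \in \dom g - \lin(\dom f) \subseteq \overline{(\dom g - \lin(\dom f))}$.

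Next I would invoke \cref{p:vobang}\cref{eq:vobang:i}, which gives $\scal{\vx}{\tfrac{1}{\sigma}\vx - \lin^*\vy - \r} \le 0$, and \cref{p:vobang}\cref{eq:vobang:ii}, which gives $\scal{\vy}{\tfrac{1}{\tau}\vy - \lin\vx - \d} \le 0$. It then remains only to delete the cross terms: $\scal{\vx}{\lin^*\vy} = \scal{\lin\vx}{\vy} = 0$ and $\scal{\vy}{\lin\vx} = \scal{\lin\vx}{\vy} = 0$ by \cref{lem:qp:v}\cref{lem:qp:v:v}. Substituting these identities into the two inequalities turns them into exactly \cref{lem:incondet:i} and \cref{lem:incondet:ii}.

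For the ``in particular'' statements I would simply specialize $\r$ and $\d$. Since $\bzero \in \ran H$ and $\bzero = \lin^*\bzero \in \lin^*(\RR^m_{+})$, the vector $\c$ belongs to $\ran H + \lin^*(\RR^m_{+}) + \c$, so \cref{lem:incondet:i} applied with $\r = \c$ yields \cref{lem:incondet:iii}; likewise $\b \in \RR^m_{-} + \ran\lin + \b$ (taking the zero vectors of $\RR^m_{-}$ and of $\ran\lin$), so \cref{lem:incondet:ii} applied with $\d = \b$ yields \cref{lem:incondet:iv}.

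There is no genuine obstacle here: the whole argument is \cref{p:vobang} read in the polyhedral QP language, with \cref{lem:qp:v}\cref{lem:qp:v:v} used to cancel the $\lin$-terms. The one spot that warrants a moment's care is the identification of $\dom g - \lin(\dom f)$ with $\RR^m_{-} + \ran\lin + \b$, where one uses that $\ran\lin$ is a subspace so that the sign in front of it is immaterial.
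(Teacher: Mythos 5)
Your proof is correct and follows essentially the same route as the paper: both arguments specialize \cref{p:vobang}\cref{eq:vobang:i}\&\cref{eq:vobang:ii} to the QP domains identified in \cref{lem:ran:phd:v}\cref{lem:ran:phd:v:i:i} and then cancel the cross terms via the orthogonality relation \cref{lem:qp:v}\cref{lem:qp:v:v}, before taking $\r=\c$ and $\d=\b$ for the final two items. The extra care you take in matching $\ran H+\lin^*(\RR^m_+)+\c$ and $\RR^m_-+\ran\lin+\b$ with $\dom f^*+\lin^*(\dom g^*)$ and $\dom g-\lin(\dom f)$ is exactly what the paper's citation of \cref{lem:ran:phd:v}\cref{lem:ran:phd:v:i:i} encapsulates.
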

\begin{proof}
\cref{lem:incondet:i}:
Combine
\cref{p:vobang}\cref{eq:vobang:i},
\cref{lem:ran:phd:v}\cref{lem:ran:phd:v:i:i}
and 
\cref{lem:qp:v}\cref{lem:qp:v:v}
to learn that 
$0
\ge \scal{\vx}{\tfrac{1}{\sigma}\vx-\lin^*\vy-\r}
=\scal{\vx}{\tfrac{1}{\sigma}\vx-\r}-\scal{\vx}{\lin^*\vy}
=\scal{\vx}{\tfrac{1}{\sigma}\vx-\r}-\scal{\lin\vx}{\vy}
=\scal{\vx}{\tfrac{1}{\sigma}\vx-\r}$.
\cref{lem:incondet:ii}:
Proceed similarly to \cref{lem:incondet:i}
but use 
\cref{p:vobang}\cref{eq:vobang:ii}
instead of \cref{p:vobang}\cref{eq:vobang:i}. 
\cref{lem:incondet:iii}:
Apply \cref{lem:incondet:i} with $\r$ replaced by $\c$.
\cref{lem:incondet:iv}:
Apply \cref{lem:incondet:ii} with $\d$ replaced by $\b$.
             \end{proof}

                 \begin{theorem}
            \label{lem:qp:infeasibility}
            The following hold:
            \begin{enumerate}
                \item \label{lem:qp:infeasibility_certificate:vx} If $\vx \neq \bz$, then the dual \cref{eq:qpdu} is infeasible, and $\vx$ is an infeasibility certificate. 
                \item \label{lem:qp:infeasibility_certificate:vy} If $\vy \neq \bz$, then the primal \cref{eq:qpprim} is infeasible, and $\vy$ is an infeasibility certificate.
            \end{enumerate}                
            \end{theorem}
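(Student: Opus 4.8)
The plan is to read each part as a Farkas-type alternative: from the structural properties of $\v=(\vx,\vy)$ already established in \cref{lem:qp:v} and \cref{lem:incondet}, extract a short list of (in)equalities satisfied by $\vx$ (resp.\ $\vy$), and then contradict feasibility of the dual \cref{eq:qpdu} (resp.\ the primal \cref{eq:qpprim}) by a single pairing.

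First I would gather the ingredients. From \cref{lem:qp:v}\cref{lem:qp:v:vii}\&\cref{lem:qp:v:viii} we have $H\vx=\bzero$ and $\lin^*\vy=\bzero$. From \cref{lem:qp:v}\cref{lem:qp:v:iv} the vector $-\tau\lin\vx$ equals a projection onto $\RR^m_{-}$, hence lies in $\RR^m_{-}$; since $\tau>0$ this gives $\lin\vx\ge\bzero$. Symmetrically, \cref{lem:qp:v}\cref{lem:qp:v:iii} gives $-\vy=P_{\RR^m_{+}}(-\vy-\tau\lin\vx)\in\RR^m_{+}$, i.e.\ $-\vy\ge\bzero$. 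Finally, \cref{lem:incondet}\cref{lem:incondet:iii}\&\cref{lem:incondet:iv} give $\tfrac{1}{\sigma}\norm{\vx}^2\le\scal{\c}{\vx}$ and $\tfrac{1}{\tau}\norm{\vy}^2\le\scal{\b}{\vy}$; in particular $\vx\neq\bzero$ forces $\scal{\c}{\vx}>0$ and $\vy\neq\bzero$ forces $\scal{\b}{\vy}>0$.

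For \cref{lem:qp:infeasibility_certificate:vx}: assuming $\vx\neq\bzero$, suppose towards a contradiction that $(\q,\y)$ is feasible for \cref{eq:qpdu}, i.e.\ $H\q+\c+\lin^*\y=\bzero$ and $\y\ge\bzero$. Pairing this identity with $\vx$ and using $H=H^*$ together with $H\vx=\bzero$ yields $0=\scal{\q}{H\vx}+\scal{\c}{\vx}+\scal{\y}{\lin\vx}=\scal{\c}{\vx}+\scal{\y}{\lin\vx}$, whence $\scal{\c}{\vx}=-\scal{\y}{\lin\vx}\le0$ because $\y\ge\bzero$ and $\lin\vx\ge\bzero$ --- contradicting $\scal{\c}{\vx}>0$. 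Thus \cref{eq:qpdu} is infeasible, and the relations $H\vx=\bzero$, $\lin\vx\ge\bzero$, $\scal{\c}{\vx}>0$ are precisely the certificate. For \cref{lem:qp:infeasibility_certificate:vy}: assuming $\vy\neq\bzero$, suppose towards a contradiction that $\x$ satisfies $\lin\x\le\b$. Since $-\vy\ge\bzero$ and $\lin\x-\b\le\bzero$ componentwise, $\scal{-\vy}{\lin\x-\b}\le0$; expanding and using $\lin^*\vy=\bzero$ gives $0\ge\scal{-\lin^*\vy}{\x}+\scal{\vy}{\b}=\scal{\b}{\vy}$, contradicting $\scal{\b}{\vy}>0$. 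Hence \cref{eq:qpprim} is infeasible, certified by $-\vy\ge\bzero$, $\lin^*\vy=\bzero$, $\scal{\b}{\vy}>0$.

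No step here is a genuine obstacle; essentially all the work has been front-loaded into \cref{lem:qp:v} and \cref{lem:incondet}. The only points that demand care are sign bookkeeping --- keeping straight that the nonnegative certificate vectors are $\lin\vx$ and $-\vy$, not $\vy$ itself (the PDHG sign convention for the dual iterate is the opposite of the usual multiplier one) --- and stating precisely which bundle of relations is being called an ``infeasibility certificate'', so that the underlying alternative theorem being invoked (classical Farkas for the primal polyhedron $\{\x:\lin\x\le\b\}$ on one side, and the analogous alternative for the dual equality-plus-nonnegativity system on the other) is unambiguous.
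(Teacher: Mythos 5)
Your proposal is correct and follows essentially the same route as the paper's proof: both extract $H\vx=\bzero$, $\lin\vx\ge\bzero$, $\scal{\c}{\vx}>0$ (resp.\ $\lin^*\vy=\bzero$, $\vy\le\bzero$, $\scal{\b}{\vy}>0$) from \cref{lem:qp:v} and \cref{lem:incondet}, and then obtain the contradiction by pairing $\vx$ with the dual equality constraint (resp.\ $\vy$ with $\lin\x\le\b$). The only differences are presentational.
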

          \begin{proof}
            \cref{lem:qp:infeasibility_certificate:vx}: 
            Indeed, on the one hand it follows from \cref{lem:incondet}\cref{lem:incondet:iii} that $\scal{\c}{\vx}\ge \tfrac{1}{\sigma}\norm{\vx}^2>0$.
            On the other hand \cref{lem:qp:v}\cref{lem:qp:v:iv} implies that $\lin \vx\ge\bzero$.  In addition, $H\vx=\bzero$.  Thus, taking the inner product of the dual constraint $H\q=-(\c+\lin^*\y)$ with $\vx$ yields $0=\scal{\c}{\vx}+\langle \lin\vx,\y\rangle$, which contradicts the other constraint $\y\ge\bzero$.
            Altogether, $\vx$ is an infeasibility certificate for the dual \cref{eq:qpdu}.
            \cref{lem:qp:infeasibility_certificate:vy}: Indeed, on the one hand it follows from \cref{lem:incondet}\cref{lem:incondet:iv} that $\scal{\b}{\vy}\ge \tfrac{1}{\tau}\norm{\vy}^2>0$.
            On the other hand \cref{lem:qp:v}\cref{lem:qp:v:viii} implies that $\lin^* \vy=\bzero$.  Finally, $\vy\le\bzero$ by \cref{lem:qp:v}\cref{lem:qp:v:iii}.  Taking the inner product of $\vy$ with both sides of the constraint $\lin\x\le\b$ yields $\langle\v_D,\lin x\rangle \ge \langle \v_D,\b\rangle$, i.e., $0\ge \langle \v_D,\b\rangle$, a contradiction.
            Altogether, $\vy$ is an infeasibility certificate for  \cref{eq:qpprim}.
            \end{proof}

\subsection{PDHG for QP: Dynamic behaviour}
		\label{sec:qpdynamic}
   In this section, we show that in the case of QP, $((\x_k,\y_k)+k\v)_\knn$ converges as $k\rightarrow\infty$.  Our result extends the analogous result for LP due to Applegate et al. Our proof technique is somewhat different from that of \cite{Applegate} in that it builds on our previous characterization of $\ran(\Id-T)$.  This result strengthens both parts of \cref{fact:T:v} for the special case of PDHG applied to QP.

		We start with the following useful lemma.
		
		\begin{lemma}
			\label{lem:intcone}
			Let $C$ be a nonempty closed convex cone of $X$
			such that $\inte C\neq \fady$.
			Let $\w\in \inte C$ and let $M>0$. Then there exists $\overline{\alpha}\ge 0$
			such that 
			$(\forall \alpha\ge \overline{\alpha})$
			$\ball{\bzero}{M}+\alpha \w\subseteq  \inte C$.
		\end{lemma}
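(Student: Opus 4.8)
The plan is to use that $\w\in\inte C$ furnishes a whole ball around $\w$ inside $C$, then to inflate that ball by the scalar $\alpha$ via the cone property: once $\alpha$ is large enough, the inflated ball $\ball{\alpha\w}{\alpha\rho}$ swallows the translate $\ball{\bzero}{M}+\alpha\w=\ball{\alpha\w}{M}$.

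First I would fix $\rho>0$ such that $\ball{\w}{\rho}\subseteq C$; this is possible by the definition of $\inte C$, and after replacing $\rho$ by $\rho/2$ if needed we may take this to be an \emph{open} ball, which is then contained in $\inte C$. Next, for any $\alpha>0$, since $C$ is a cone we have $\alpha C=C$, and since multiplication by $\alpha>0$ is a homeomorphism of $X$ it follows that $\alpha\,\ball{\w}{\rho}=\ball{\alpha\w}{\alpha\rho}$ is an open ball contained in $C$, hence contained in $\inte C$.

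Then I would set $\overline{\alpha}:=(M+1)/\rho>0$. For any $\alpha\ge\overline{\alpha}$ and any $\z\in\ball{\bzero}{M}$ one has $\norm{(\z+\alpha\w)-\alpha\w}=\norm{\z}\le M<\alpha\rho$, so $\z+\alpha\w\in\ball{\alpha\w}{\alpha\rho}\subseteq\inte C$; since $\z$ was arbitrary this yields $\ball{\bzero}{M}+\alpha\w\subseteq\inte C$, as required.

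There is essentially no genuine obstacle here. The only two points needing a line of care are: (i) that the ball supplied by $\w\in\inte C$ can be taken to lie inside $\inte C$ (openness of the ball, or convexity of $C$), and (ii) the identity $\alpha\,\ball{\w}{\rho}=\ball{\alpha\w}{\alpha\rho}$ together with $\alpha C=C$, which is precisely where the cone hypothesis enters. The argument is insensitive to whether $\ball{\cdot}{\cdot}$ is interpreted as an open or a closed ball, because the radius comparison $M<\alpha\rho$ is strict.
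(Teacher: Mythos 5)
Your proof is correct and follows essentially the same route as the paper's: both arguments take a ball of some radius $\rho$ (resp.\ $\epsilon$) around $\w$ inside $\inte C$, scale it by $\alpha$ using the cone property, and observe that once $\alpha\rho>M$ the translate $\ball{\bzero}{M}+\alpha\w$ is swallowed by the scaled ball. The only cosmetic difference is that the paper phrases the scaling step via the identity $\alpha(\w+\ball{\bzero}{\epsilon})=\alpha(\inte C)=\inte C$ (using that $\inte C$ is itself a convex cone), whereas you argue through $\alpha C=C$ and the openness of the scaled ball; both are fine.
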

		\begin{proof}
			Indeed, observe that $\inte C$
			is a nonempty convex cone of $X$.
			If $\ball{0}{M} \subseteq \inte C$ then $\overline{\alpha}=0$
			and the conclusion follows.
			Otherwise,
			by assumption $(\exists \epsilon >0)$
			such that $\w+\ball{0}{\epsilon}\subseteq \inte C$.
			Let $\overline{\alpha}\ge  \tfrac{M}{\epsilon}+1$ and observe that
			$(\forall \alpha \ge\overline{\alpha} )$
			$\w+\ball{\bzero}{\epsilon-\tfrac{M}{{\alpha}}}\subseteq \inte C$.
			Now,
   		$\ball{\bzero}{M}+\alpha \w 
					\subseteq \ball{\bzero}{M}+\alpha \w +\ball{\bzero}{\alpha\epsilon-M}
					=\alpha \w +\ball{\bzero}{\alpha\epsilon}=\alpha (\w+\ball{\bzero}{\epsilon})
					 \subseteq \alpha (\inte C)=\inte C.$
			The proof is complete.
		\end{proof}
		
		\begin{corollary}
			\label{cor:intcone}
			Let $C$ be a nonempty closed convex cone of $X$
			such that $\inte C\neq \fady$.
			Let $\w\in \inte C$ and suppose that
			$(\x_k)_\knn$ is a bounded sequence 
			in $X$. 
			Then there exists $\overline{\alpha}\ge0$
			such that 
			$(\forall \alpha\ge \overline{\alpha})$
			the following hold:
			\begin{enumerate}
				\item
				\label{cor:intcone:i}
				$(\x_k+\alpha \w)_\knn$
				lies in $\inte C$. 
				\item 
				\label{cor:intcone:ii}
				$P_{C}(\x_k+\alpha \w)=\x_k+\alpha \w$.
				\item
				\label{cor:intcone:iii}
				$P_{C^\ominus}(\x_k+\alpha \w)=\bzero$. 
			\end{enumerate}
		\end{corollary}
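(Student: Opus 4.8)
The plan is to derive all three items from Lemma~\ref{lem:intcone} together with two standard facts about projections onto cones. Since $(\x_k)_\knn$ is bounded, I would first fix $M>0$ such that $\norm{\x_k}\le M$, hence $\x_k\in\ball{\bzero}{M}$, for every $\knn$. Applying Lemma~\ref{lem:intcone} with this $M$ and the given $\w\in\inte C$ yields some $\overline{\alpha}\ge 0$ such that $\ball{\bzero}{M}+\alpha\w\subseteq\inte C$ for all $\alpha\ge\overline{\alpha}$. Because $\x_k\in\ball{\bzero}{M}$, this immediately gives $\x_k+\alpha\w\in\inte C$ for every $\knn$ and every $\alpha\ge\overline{\alpha}$, which is \cref{cor:intcone:i}. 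The only role of the boundedness hypothesis is to let us choose a single $M$, and therefore a single $\overline{\alpha}$, that works simultaneously for all $k$; the ball formulation of Lemma~\ref{lem:intcone} is exactly what makes this uniform choice possible.

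For \cref{cor:intcone:ii}, I would note that $\x_k+\alpha\w\in\inte C\subseteq C$ and that $C$ is nonempty, closed, and convex, so the projector $P_C$ fixes every point of $C$ (this is immediate from the characterization of the projection, see, e.g., \cite[Theorem~3.16]{BC2017}); hence $P_C(\x_k+\alpha\w)=\x_k+\alpha\w$. For \cref{cor:intcone:iii}, I would invoke the Moreau conic decomposition already used earlier in the paper (\cite[Theorem~6.30(i)]{BC2017}): for every $\x\in X$ one has $\x=P_C\x+P_{C^\ominus}\x$. Taking $\x=\x_k+\alpha\w$ and using \cref{cor:intcone:ii} yields $P_{C^\ominus}(\x_k+\alpha\w)=(\x_k+\alpha\w)-P_C(\x_k+\alpha\w)=\bzero$.

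There is no genuine obstacle in this corollary: the entire geometric content sits in Lemma~\ref{lem:intcone}, and the corollary merely packages that conclusion with the two elementary projection facts above. The only point requiring a little care, namely the uniformity of $\overline{\alpha}$ in the index $k$, is handled at the outset by enclosing the whole sequence in one ball before invoking the lemma.
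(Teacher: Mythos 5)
Your proposal is correct and follows essentially the same route as the paper: enclose the bounded sequence in a single ball, invoke Lemma~\ref{lem:intcone} to get a uniform $\overline{\alpha}$ for \cref{cor:intcone:i}, and then deduce \cref{cor:intcone:ii} and \cref{cor:intcone:iii} as direct consequences (the paper leaves the projection and Moreau-decomposition details implicit, which you spell out).
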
	
		\begin{proof}
			\cref{cor:intcone:i}:
			Because $(\x_k)_\knn$
			is bounded there exists 
			$M>0$ such that
			$(\x_k)_\knn$
			lies in $\ball{0}{M}$.
			Now combine this with \cref{lem:intcone}.
			\cref{cor:intcone:ii}\&\cref{cor:intcone:iii}:
			This is a direct consequence of 
			\cref{cor:intcone:i}.
		\end{proof}
			
		Because $\v\in \ran (\Id-T)$ (see \cref{e:def:v:Pm}) we learn that
		\begin{equation}
  \label{e:fixnotempty}
			\fix(\v+T)\neq \fady,	
		\end{equation}	
		where $(\forall \z\in \RR^n\times \RR^m)$ $(\v+T)(\z)=\v+T\z$.
  
We are now ready for the main result in this section.
We point out that \cref{t:QPconvergence} below generalizes \cite[Theorem~5]{Applegate} 
to quadratic programming. 
		\begin{theorem}
  \label{t:QPconvergence}
			Let $\z_0=
   (\x_0,\y_0)\in \RR^n\times \RR^m$.
			Update via $(\forall \knn)$
			\begin{equation}
				\z_{k+1}=T\z_k.
			\end{equation}		
			Then $(\exists \alpha\ge 0) $ such that the sequence $(\z_k+k\v)_\knn$
			converges to a point in $\alpha \v+\fix (\v+T)$.
		\end{theorem}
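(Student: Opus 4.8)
The plan is to track the bounded ``shadow'' sequence $\w_k:=\z_k+k\v$ and to show that, for $k$ large, it is driven by a single \emph{fixed} map $\hat T$ which inherits nonexpansiveness from $T$ and whose fixed point set contains $\fix(\v+T)$; a routine Fej\'er argument then forces convergence, and undoing the shift locates the limit. Throughout, $Z$ carries the norm $\norm{\cdot}\normM$.

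First, since $\v\in\ran(\Id-T)$ by \cref{e:def:v:Pm}, \cref{fact:fejer} shows $(\w_k)_\knn$ is Fej\'er monotone with respect to $\fix(\v+T)$, which is nonempty by \cref{e:fixnotempty}; hence $(\w_k)_\knn$ is bounded. Also $\w_{k+1}-\w_k=\v-(\z_k-\z_{k+1})\to\bzero$ by \cref{fact:T:v}\cref{fact:T:v:ii}. Next I would prove the structural fact that
\[
\hat T(\w):=\lim_{t\to+\infty}\big(T(\w-t\v)+(t+1)\v\big)
\]
exists, is a single-valued continuous map, and is \emph{attained} for every $t$ past a threshold that is uniform over bounded sets of $\w$. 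Here the explicit QP form \cref{e:impt:xup} of $T$ is used: since $H\vx=\bzero$ and $\lin^*\vy=\bzero$ (\cref{lem:qp:v}\cref{lem:qp:v:vii}\&\cref{lem:qp:v:viii}), one has $\vx-\sigma\lin^*\vy=\vx$ and $(\Id+\sigma H)^{-1}\vx=\vx$, so the first block of $T(\w-t\v)+(t+1)\v$ does not depend on $t$ at all. For the second block, partition $\{1,\dots,m\}$ into $I_+=\{i:(\lin\vx)_i>0\}$, $I_-=\{i:(\vy)_i<0\}$, and $I_0$ (the rest); by \cref{lem:qp:v}\cref{lem:qp:v:iii},\cref{lem:qp:v:iv},\cref{lem:qp:v:v} we have $\lin\vx\ge\bzero$, $\vy\le\bzero$ and $\scal{\lin\vx}{\vy}=0$, so this is a genuine partition and on $I_0$ both $(\lin\vx)_i$ and $(\vy)_i$ vanish. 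The argument of $P_{\RR^m_+}$ inside $T(\w-t\v)$ equals a quantity that is bounded when $\w$ ranges over a bounded set, minus $t(\vy+\tau\lin\vx)$, so on $I_+$ it drifts to $-\infty$ and on $I_-$ to $+\infty$ at a rate linear in $t$; \cref{cor:intcone}, applied coordinatewise, then says that past a threshold depending only on a bound for $\w$, $P_{\RR^m_+}$ acts as $\bzero$ on the $I_+$ block and as the identity on the $I_-$ block, while on $I_0$ there is no drift and $P_{\RR^m_+}$ keeps its coordinatewise positive-part form. This gives the claim, with $\hat T$ affine in the first block and piecewise affine in the second.

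With $\hat T$ in hand: for $\w,\w'$ in a bounded set choose a common admissible $t$, so $\hat T\w-\hat T\w'=T(\w-t\v)-T(\w'-t\v)$ and nonexpansiveness of $T$ (\cref{prop:T:prop}\cref{prop:T:prop:ii}) yields $\norm{\hat T\w-\hat T\w'}\normM\le\norm{\w-\w'}\normM$. If $\z^*\in\fix(\v+T)$ then $\z^*-s\v\in\fix(\v+T)$ for all $s\ge0$ by \cref{fact:fixvT}, i.e.\ $T(\z^*-s\v)=\z^*-(s+1)\v$; letting $s\to+\infty$ gives $\hat T\z^*=\z^*$, so $\fix(\v+T)\subseteq\fix\hat T$ and in particular $\fix\hat T\neq\fady$. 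Since $(\w_k)_\knn$ is bounded, there is $K_0$ with $\w_{k+1}=\hat T\w_k$ for all $k\ge K_0$. If $\w_{k_j}\to\bar\w$ along a subsequence, then $\w_{k_j+1}=\hat T\w_{k_j}\to\hat T\bar\w$ while $\w_{k_j+1}-\w_{k_j}\to\bzero$, so $\bar\w\in\fix\hat T$; then $(\norm{\w_k-\bar\w}\normM)_{k\ge K_0}$ is nonincreasing and admits a subsequence tending to $0$, hence $\w_k\to\bar\w=:\w^\infty$. Finally $\hat T\w^\infty=\w^\infty$ unpacks, for $t$ large, to $T(\w^\infty-t\v)=\w^\infty-(t+1)\v$, i.e.\ $\w^\infty-t\v\in\fix(\v+T)$, so $\w^\infty\in\alpha\v+\fix(\v+T)$ with $\alpha:=t\ge0$, as required.

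The main obstacle is the structural step that establishes $\hat T$: one must show that $T(\cdot-t\v)+(t+1)\v$ freezes into a single map as $t\to+\infty$, uniformly on bounded sets. This is precisely where the kernel relations $H\vx=\bzero$, $\lin^*\vy=\bzero$ and the complementarity/sign information about $\lin\vx$ and $\vy$ from \cref{lem:qp:v} are indispensable, and where \cref{cor:intcone} is used; once it is in place, everything else is a standard Fej\'er-monotonicity argument.
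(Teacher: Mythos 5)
Your proof is correct, and it rests on exactly the same structural facts as the paper's: the kernel relations $H\vx=\bzero$, $\lin^*\vy=\bzero$, $J_{\sigma H}\vx=\vx$ from \cref{lem:qp:v}, the sign/complementarity structure of $(\lin\vx,\vy)$, and the coordinatewise ``freezing'' of the positive-part map obtained from \cref{cor:intcone}. Where you genuinely diverge is in the endgame. The paper proves, by induction with a threshold $K$ that simultaneously controls the unshifted iterates and the magnitude of the shift (its Claims 1--2), that the shifted tail is \emph{exactly} an orbit of $\v+T$, namely $\z_{k+K}+k\v=(\v+T)^k(\z_K)$, and then invokes the standard convergence theorem for firmly nonexpansive maps with nonempty fixed point set; the constant $\alpha$ emerges as $K$ via \cref{fact:fixvT}. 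You instead package the freezing phenomenon into the limit operator $\hat T(\w)=\lim_{t\to+\infty}\big(T(\w-t\v)+(t+1)\v\big)$, show the shadow sequence is eventually an orbit of $\hat T$, and run a self-contained Fej\'er-plus-demiclosedness argument for $\hat T$, recovering $\alpha$ by unpacking $\fix\hat T$. Your route makes the appearance of the shift $\alpha\v$ more transparent and replaces the paper's induction and double-threshold bookkeeping with the (equivalent) verification that $\hat T$ is attained uniformly on bounded sets, is nonexpansive there, and satisfies $\fix(\v+T)\subseteq\fix\hat T$ --- all of which you carry out correctly. The one point to keep explicit is that both the nonexpansiveness estimate and the demiclosedness step need a common admissible $t$ for the bounded set containing the orbit \emph{together with} its cluster point $\bar\w$; your uniform-over-bounded-sets formulation covers this, but the argument would fail if the threshold were allowed to depend on the individual point.
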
	
		
		\begin{proof}
			Observe that the sequence
			\begin{equation}
				\label{e:bdd:decop}
				(\z_k+k\v)_\knn
				=\begin{pmatrix}
					(\x_k+k{\vx})_\knn\\
					(	\y_k+k{\vy})_\knn
				\end{pmatrix}
				\text{~~is bounded~~}
			\end{equation}	
			by \cref{fact:fejer}.
			Furthermore, the sequence
			\begin{equation}
				\label{e:bdd:decoup:pazy}
				(\z_k-\z_{k+1})_\knn
				=\begin{pmatrix}
					(\x_k-\x_{k+1})_\knn\\
					(\y_k-\y_{k+1})_\knn
				\end{pmatrix}
				\text{~~is convergent, hence bounded~~}
			\end{equation}	
			by  \cref{fact:T:v}\cref{fact:T:v:ii}.

			We set $I=\menge{\{i\in 1, \ldots, m\}}{(\lin{\vx})_i>0 \text{~or ~} (\vy)_i<0}$.
			We proceed by verifying the following claims.
			
			\textsc{Claim~1:}	
			There exists $K\ge 0$ such that 
			$(\forall k\ge K)$ $(\forall i\in I)$
			\begin{equation}
				\label{e:proj:simp}
				((\y_k+\tau \lin(2\x_{k+1}-\x_k)-\tau \b)_{i})_+=
				\begin{cases}
					0,&(\lin{\vx})_i>0;
					\\
					(\y_k+\tau \lin (2\x_{k+1}-\x_k)-\tau \b)_{i},&({\vy})_i<0,
				\end{cases}	
			\end{equation}
			and $(\forall k\in \NN)$  $(\forall i\in I)$
			\begin{equation}
				\label{e:proj:simp:2}
				\begin{split}
					&\qquad((\y_k+k{\vy}+\tau \lin (2\x_{k+1}-\x_k+(k+2){\vx})-\tau \b-K\lin {\vx}-K{\vy})_{i})_+
					\\
					&=\begin{cases}
						0,&(\lin {\vx})_i>0;
						\\
						(\y_k+ k{\vy} + \tau \lin (2\x_{k+1}-\x_k)-\tau \b-K{\vy})_{i},&({\vy})_i<0.
					\end{cases}	
				\end{split}
			\end{equation}
			We proceed by verifying the following two sub-claims.
			
			\textsc{Claim~1-a:}	
			There exists $ \overline{K}_1\ge 0$ such that 
			$(\forall k\ge \overline{K}_1)$
			 $(\forall i\in I)$
			\cref{e:proj:simp} holds.
			
			Indeed, let $ i\in I$.
			First, suppose that $(\lin {\vx})_i>0$.
			In this case, \cref{lem:qp:v}\cref{lem:qp:v:x}
			implies that 
			$({\vy})_i=0$. 
			Therefore
			\begin{align}
				&\quad	(\y_k+\tau \lin (2\x_{k+1}-\x_k)-\tau \b)_{i}
				\nonumber
				\\
				&=(\y_k+\tau \lin (\x_{k+1}-\x_k+(\x_{k+1}+(k+1){\vx}))-\tau \b -\tau (k+1)\lin {\vx})_i.
			\end{align}
			
			It follows from \cref{e:bdd:decop}, the continuity of $\lin $, \cref{e:bdd:decoup:pazy}
			and  \cref{e:bdd:decop} again that 
			the sequences $((\y_k)_i)_\knn=((\y_k+k{\vy})_i)_\knn$ and
			$((\tau \lin (\x_{k+1}-\x_k+(\x_{k+1}+(k+1){\vx}))-\tau \b)_i)_\knn$
			are bounded. Hence, their sum is bounded.
			Applying \cref{cor:intcone}\cref{cor:intcone:iii} with 
			$C$ replaced by $\left]-\infty,0\right]$,
			$(\x_k)_\knn$
			replaced by   $((\y_k + k {\vy} + \tau \lin (\x_{k+1}-\x_k+(\x_{k+1}+(k+1){\vx}))-\tau \b)_i)_\knn$
			and $\w$ replaced by $(-\tau \lin {\vx})_i<0$ we learn that 
			there exists $K_i$ such that 
			\begin{equation}
				\label{e:alpha:pos}
				(\forall k\ge K_i)
				\quad
				((\y_k+k {\vy} + \tau \lin (2\x_{k+1}-\x_k)-\tau \b)_{i})_+ = ((\y_k+\tau \lin (2\x_{k+1}-\x_k)-\tau \b)_{i})_+=0.
			\end{equation}	
			Now, suppose that $({\vy})_i<0$.
			In this case, \cref{lem:qp:v}\cref{lem:qp:v:xi}
			implies that 
			$(\lin {\vx})_i=0$. Therefore,
			\begin{equation}
				(\y_k+\tau \lin (2\x_{k+1}-\x_k)-\tau \b)_{i}
				=(\y_k+k{\vy}+\tau \lin (2\x_{k+1}-\x_k)-\tau \b-k{\vy})_i.
			\end{equation}
			It follows from \cref{e:bdd:decop}, the continuity of $\lin $, \cref{e:bdd:decoup:pazy}
			and  \cref{e:bdd:decop} again that 
			the sequences $((\y_k+k{\vy})_i)_\knn$ and
			$((\tau \lin (\x_{k+1}-\x_k+x_{k+1})-\tau \b)_i)_\knn
             =((\tau \lin (\x_{k+1}-\x_k+(x_{k+1}+(k+1){\vx}))-\tau \b)_i)_\knn$
			are bounded. Hence, their sum is bounded.
			Applying
			\cref{cor:intcone}\cref{cor:intcone:ii}
   with 
			$C$ replaced by $\left[0,+\infty \right[$,
			$(\x_k)_\knn$
			replaced by   $((\y_k+k{\vy}+\tau \lin (\x_{k+1}-\x_k+\x_{k+1})-\tau \b)_i)_\knn$
			and $\w$ replaced by $(-{\vy})_i>0$ 
			we learn that 
			there exists $K_i$ such that 
			\begin{equation}
				\label{e:alpha:neg}
				(\forall k\ge K_i)
				\quad
				((\y_k+\tau \lin (2\x_{k+1}-\x_k)-\tau \b)_{i})_+
    =(\y_k+\tau \lin (2\x_{k+1}-\x_k)-\tau \b)_{i}.
			\end{equation}	
			We set 
			\begin{equation}
				\overline{K}_1=\max_{i\in I}\{K_i\}.	
			\end{equation}	
			Then $ \overline{K}_1$ satisfies \cref{e:proj:simp}
			in view of \cref{e:alpha:pos} and \cref{e:alpha:neg}.
			
			\textsc{Claim~1-b:}	
			There exists $ \overline{K}_2\ge 0$ such that 
			$(\forall K\ge \overline{K}_2)$
			$(\forall k\in \NN)$ $(\forall i\in I)$
			\cref{e:proj:simp:2} holds.
			
			Indeed, observe that $(\forall \knn)$
			\begin{equation}
				\label{e:bddd}
				\begin{split}
					\y_k+k{\vy}+\tau \lin (2\x_{k+1}-\x_k+(k+2){\vx})-\tau \b
					\\
					=\y_k+k{\vy}+\tau \lin (2(\x_{k+1}+(k+1){\vx})-(\x_k+k{\vx}))-\tau \b.
				\end{split} 
			\end{equation}
			Hence, $(\forall i\in I)$
			$( \y_k+k{\vy}+\tau \lin (2\x_{k+1}-\x_k+(k+2){\vx})-\tau \b)_i$
			is bounded by  \cref{e:bdd:decop}.
			As before,  if $(\lin{\vx})_i>0$
			then  \cref{lem:qp:v}\cref{lem:qp:v:x}
			implies that 
			$({\vy})_i=0$. 
			Applying \cref{cor:intcone}\cref{cor:intcone:ii}\&\cref{cor:intcone:iii} with 
			$C$ replaced by $\left]-\infty,0\right]$,
			$(\x_k)_\knn$
			replaced by  $(( \y_k+k{\vy}+\tau \lin (2\x_{k+1}-\x_k+(k+2){\vx})-\tau \b)_i)_\knn$
			and $\w$ replaced by $(-\lin{\vx})_i<0$ 
			implies that there exists $\hat{K}_i\ge 0$ such that $(\forall {\hat K}\ge \hat{K}_i)$
			such that $(\forall \knn)$ 
			\begin{align}
				&\qquad ((\y_k+k{\vy}+\tau \lin (2\x_{k+1}-\x_k+(k+2){\vx})-\tau \b-K\lin {\vx}-K{\vy})_{i})_+
				\nonumber
				\\
				&=((\y_k+k{\vy}+\tau \lin (2\x_{k+1}-\x_k+(k+2){\vx})-\tau \b-K\lin {\vx})_{i})_+=0. 
			\end{align}
   
			If $({\vy})_i<0$
			then  \cref{lem:qp:v}\cref{lem:qp:v:x}
			implies that 
			$(\lin{\vx})_i=0$. 
			Applying \cref{cor:intcone}\cref{cor:intcone:ii}\&\cref{cor:intcone:iii} with 
			$C$ replaced by $\left]-\infty,0\right]$, $(\x_k)_\knn$
			replaced by  $(( \y_k+k{\vy}+\tau \lin (2\x_{k+1}-\x_k+(k+2){\vx})-\tau \b)_i)_\knn$
			and $\w$ replaced by $(-{\vy})_i>0$ 
			implies that there exists $\hat{K}_i\ge 0$
			such that $(\forall \knn)$ $(\forall K\ge \hat{K}_i\ge 0)$
			\begin{align}
				&\qquad ((\y_k+k{\vy}+\tau \lin (2\x_{k+1}-\x_k+(k+2){\vx})-\tau \b
    -K\lin {\vx}-K{\vy})_{i})_+
				\nonumber
				\\
				&=(\y_k+k{\vy}+\tau \lin (2\x_{k+1}-\x_k)-\tau \b-K{\vy})_{i}. 
			\end{align}
   
			We set 
			\begin{equation}
				\overline{K}_2=\max_{i\in I}\{\hat{K}_i\}.	
			\end{equation}	
			Then $ \overline{K}_2$ satisfies \cref{e:proj:simp}
			in view of \cref{e:alpha:pos} and \cref{e:alpha:neg}.
			
			Finally, we set $K=\max\{ \overline{K}_1, \overline{K}_2\}$.
			This verifies \textsc{Claim~1}.	
			
			\textsc{Claim~2:}	
			We have $(\forall \knn)$
			\begin{equation}
				\label{e:seq:equiv}
				\z_{k+K}+k\v=(\v+T)^k(\z_K).
			\end{equation}	
			To simplify the notation, we set 
			$(\forall \knn)$
			\begin{equation}
				\label{e:seq:equiv:w}
				\w_k:=(\v+T)^k(\z_K)
				:=\begin{pmatrix}
					\p_k\\
					\q_k
				\end{pmatrix}.
			\end{equation}
			Therefore, \cref{e:seq:equiv} reduces to 
			proving 
			\begin{equation}
				\begin{pmatrix}
					(\p_k)_\knn\\
					(\q_k)_\knn\
				\end{pmatrix}
				=\begin{pmatrix}
					(\x_{k+K}+k{\vx})_\knn\\
					(\y_{k+K}+k{\vy})_\knn
				\end{pmatrix}.
			\end{equation}
			We use induction on $k$.
			The base case at $k=0$ is clear.
			Now suppose that for some $k\ge 0$
			\cref{e:seq:equiv} holds.
			We first verify that 
			\begin{equation}
				\label{e:seq:px}
				(\p_k)_\knn=(\x_{k+K}+k{\vx})_\knn=(\x_{k+K}+(k+K){\vx}-K {\vx})_\knn.
			\end{equation}
			Indeed, 
			the inductive hypothesis,
			the linearity of $J_H=(\Id+H)^{-1}$,
			and \cref{lem:qp:v}\cref{lem:qp:v:viii}\&\cref{lem:qp:v:ix}
			yield
			\begin{subequations}
				\begin{align}
					\p_{k+1}
					&={\vx}+J_{\sigma H}(\p_k-\sigma \lin^* \q_k-\sigma \c)	
					\\
					&={\vx}+J_{\sigma H}(\x_{k+K}+k{\vx}-\sigma \lin^* (\y_{k+K}+k{\vy})-\sigma \c)
					\\
					&={\vx}+J_{\sigma H}(\x_{k+K}-\sigma \lin^* \y_{k+K}+
					\sigma k\lin^* {\vy}-\sigma \c)	+ kJ_{\sigma H}({\vx})	
					\\
					&
					={\vx}+J_{\sigma H}(\x_{k+K}-\sigma \lin^* \y_{k+K}-\sigma \c)	+ k{\vx}
					\\
					&
					=J_{\sigma H}(\x_{k+K}-\sigma \lin^*\y_{k+K}-\sigma \c)	+(k+1){\vx}
					\\
					&=\x_{k+K+1}+(k+1){\vx}.
				\end{align}	
			\end{subequations}
			We now verify that 
			\begin{equation}
				(\q_k)_\knn=(\y_{k+K}+k{\vy})_\knn=(\y_{k+K}+(k+K){\vy}-K {\vy})_\knn.
			\end{equation}
			It follows from the
			the inductive hypothesis,
			\cref{e:proj:simp:2}
			and 
			\cref{e:seq:px} that
			\begin{subequations}
				\begin{align}
					\q_{k+1}
					&={\vy}+(\q_k+\tau \lin (2\p_{k+1}-\p_k)-\tau \b)_{+}	
					\\
					&={\vy}+(\y_{k+K}+(k+K){\vy}+\tau \lin (2(\x_{k+K+1}+(k+K+1){\vx})
					\nonumber
					\\
					&\quad -(\x_{k+K}+(k+K){\vx}))-\tau \b-\tau K \lin {\vx}-K{\vy})_{+}.
					\label{e:se:q}
				\end{align}	
			\end{subequations}
			Let $i\in \{1,\ldots,m\}$.
			We examine the following cases.
			
			\textsc{Case~1:}  $(\lin {\vx})_i>0$. In this case $({\vy})_{i}=0$.
			On the one hand, 
			\cref{e:proj:simp} implies
			that
			\begin{equation}
				(\y_{k+K+1}+(k+1){\vy})_i=0+0=0.
			\end{equation}
			
			On the other hand, 
			in view of \cref{e:proj:simp:2} 
			and \cref{e:se:q} we have
			\begin{equation}
				(\q_{k+1})_i
				=({\vy})_i+((\q_k+\tau \lin (2\p_{k+1}-\p_k)-\tau \b)_{i})_{+}	
				=0+0=0=(\y_{k+K+1}+(k+1){\vy})_i.	
			\end{equation}	
			
			\textsc{Case~2:}  $({\vy})_i<0$. In this case $(\lin{\vx})_{i}=0$.
			On the one hand, 
			\cref{e:proj:simp} implies
			that
			\begin{equation}
				(\y_{k+K+1})_i=
				(\y_{k+K}+\tau \lin (2\x_{k+K+1}-\x_{k+K})-\tau \b)_{i}.
			\end{equation}

			On the other hand, in view of \cref{e:proj:simp:2} 
			and \cref{e:se:q} we have
			\begin{subequations}
				\begin{align}
					(\q_{k+1})_i
					&=({\vy})_i+((\q_k+\tau \lin (2\p_{k+1}-\p_k)-\tau \b)_{i})_{+}	
					\\
					&=({\vy})_i+(\y_{k+K}+(k+K){\vy}+\tau \lin (2\x_{k+K+1}
					-\x_{k+K})-\tau \b-K{\vy})_i
					\\
					&=(\y_{k+K}+(k+1){\vy}+\tau \lin (2\x_{k+K+1}
					-\x_{k+K})-\tau \b)_i
					\\
					&=(\y_{k+K+1})_{i}+(k+1)({\vy})_{i}.
				\end{align}    
			\end{subequations}
			
			\textsc{Case~3:}  $(\lin{\vx})_i=({\vy})_{i}=0$.	
			In this case, it is straightforward to verify the inductive step
			and the conclusion is obvious.
			
			\textsc{Claim~3:}
			There exists $\alpha\ge 0$ such that the sequence $(\z_k+k\v)_\knn$
			converges to a $\overline{\z}\in \alpha \v+\fix (\v+T)$.
			Indeed, \cref{e:seq:equiv:w} means that $\w_k$ can be obtained by iterating $(\v+T)$ on $\w_0\equiv\z_K$.  Since $T$ is firmly nonexpansive
   (\cref{prop:T:prop}\cref{prop:T:prop:ii}), so is $\v+T$.   By Example 5.18 of \cite{BC2017}, in view of \cref{e:fixnotempty}, 
			we learn that the sequence $(\w_k)_\knn$
			converges to a point in
			$\fix (\v+T)$.
			Now combine with  \cref{e:seq:equiv} to learn
			that $\z_k+k\v$  converges to a point in 
			$Kv+\fix (\v+T)=K\v+\fix (\v+T)$,
			and the conclusion follows by recalling that
			$\fix (\v+T)=R_{-}\cdot \v+\fix (\v+T)$
			by \cref{fact:fixvT}.
		\end{proof}

\subsection{A numerical example}
Consider the linear program \cref{ex:compLP}
(respectively the quadratic program \cref{ex:compQP}) 
\newline \begin{minipage}{0.4\linewidth}
\begin{gather*}
\begin{array}{ll}
    \minimize{} & x_1 - 2x_2 \\
    \mbox{subject to} & 
    \begin{array}[t]{@{}rl}
        -x_1 + x_2&\leq  -2\\
         x_1 - x_2&\leq  1 \\
         -x_1 &\leq  0 \\
         -x_2 &\leq  0
    \end{array} 
\end{array}   
\label{ex:compLP}
\tag{LP}
\end{gather*}
\end{minipage}
\hfill
\begin{minipage}{0.5\linewidth}
\begin{gather*}
\begin{array}{ll}
    \minimize{} & 0.5 x_1^2 + 0.5 x_2^2 + x_1 - 2x_2 \\
    \mbox{subject to} & 
    \begin{array}[t]{@{}rl}
        -x_1 + x_2&\leq  -2\\
         x_1 - x_2&\leq  1 \\
         -x_1 &\leq  0 \\
         -x_2 &\leq  0
    \end{array} 
\end{array}   
\label{ex:compQP}
\tag{QP}
\end{gather*}
\end{minipage}

which was
given in \cref{ex:vcalc}\cref{ex:vcalc:i}
(respectively \cref{ex:vcalc}\cref{ex:vcalc:ii}).
In this section, we provide numerical illustrations of 
\cref{t:QPconvergence} when applied to 
\cref{ex:compLP} and \cref{ex:compQP}.
Additionally, we numerically verify the 
conclusion of \cref{ex:vcalc}.
For both \cref{ex:compLP} and \cref{ex:compQP} we set $\sigma=\tau=0.3$,
$\x_0=(0,0)$,
$\y_0=(0,0,-1,-1)$,
 and 
 $\z_0=(\x_0,\y_0)\tran$.
 Finally, following the notation of
 \cref{t:QPconvergence} we set 
 $(\forall \knn)$
 $\z_k=T^k\z_0$.
 Let $\knn$. We denote the 
 component of $(\z_k+k\v-(\v+T)(\z_k+k\v))$
 corresponding to $\x_k$
 (respectively $\y_k$)
 by $(\z_k+k\v-(\v+T)(\z_k+k\v))_\x$
(respectively $(\z_k+k\v-(\v+T)(\z_k+k\v))_\y$).

\begin{remark}
\label{rem:compareaffine}
Some comments are in order.
\begin{enumerate}
\item 
\label{rem:compareaffine:i}
Let $\w_0\in Z$ and
let $Q\colon Z\to Z$ be an \emph{affine} firmly nonexpansive operator.
We set $(\w_k)_\knn=(Q^k\w_0)_\knn$ and we let $\v_Q$
be the minimal norm vector in $\ran (\Id-Q)$.
The authors in \cite[Theorem~3.2]{BM2015} proved that $\w_k+k\v_Q$
converges to a point in $\fix (\v_Q+Q)$. 
\item
\label{rem:compareaffine:ii}
In view of \cref{rem:compareaffine:i} one wonders
if the limit of $(\z_k+k\v)$ lies $ \fix (\v+T)$. 
Our numerical experiments provide  
a negative answer to this question, which proves 
the tightness of the conclusion of \cref{t:QPconvergence}.
Indeed,
as the plots in \Cref{fig1} and \Cref{fig2} below show, 
the  sequence $\z_k+k\v-(\v+T)(\z_k+k\v)\to u^*\neq \bzero$.
Recalling \cref{t:QPconvergence},
this in turn implies that $\z_k+k\v\to z^*\not\in \fix (\v+T)$.
\end{enumerate}
\end{remark}
\begin{figure}[h!]
    \centering
    \includegraphics[scale=0.55]{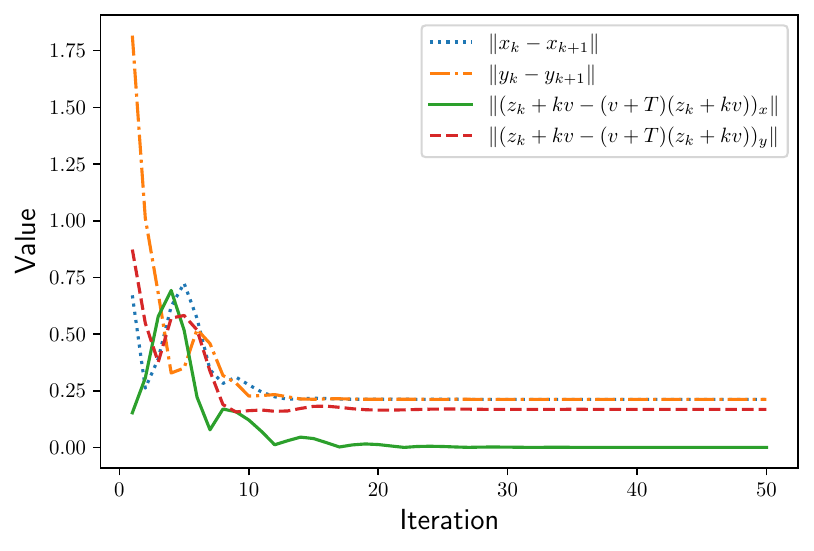}
    \includegraphics[scale=0.55]{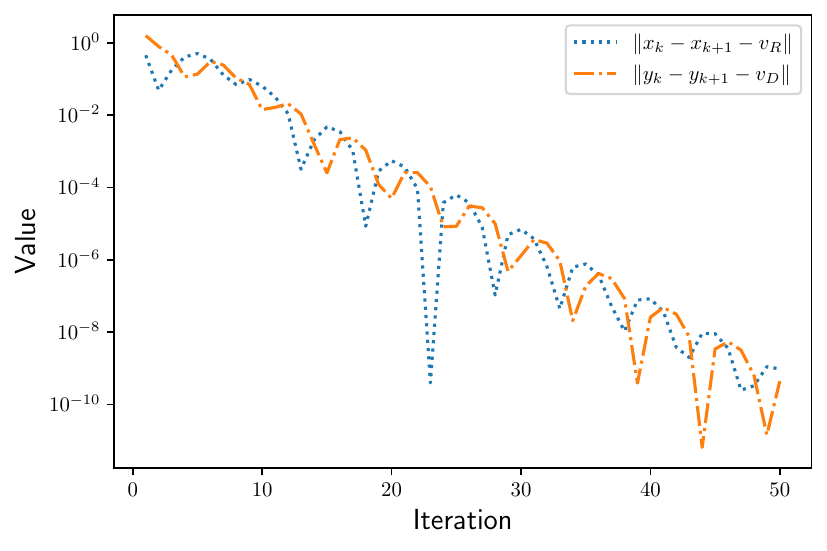}
    \caption{Python plots to illustrate the 
    iterative behavior of PDHG when applied to solve 
    \cref{ex:compLP}.
    Left: The first $50 $  terms  of the sequences $(\norm{\x_k-\x_{k+1}})_\knn$ (the blue dotted curve)
    and $(\norm{\y_k-\y_{k+1}})_\knn$ (the orange dash-dotted curve) are depicted.
    Also, the first $50 $  terms   of both 
    components
    of
    the sequence 
    $(\norm{\z_k+k\v-(\v+T)(\z_k+k\v)})_\knn$
    are depicted (the solid green curve and the dashed red curve). 
    Right: The first $50 $  terms of the sequences $(\norm{\x_k-\x_{k+1}-\vx})_\knn$ (the  orange dotted curve)
    and $(\norm{\y_k-\y_{k+1}-\vy})_\knn$ (the blue  dotted curve) are depicted.
    }
    \label{fig1}
\end{figure}
\begin{figure}[h!]
    \centering
    \includegraphics[scale=0.55]{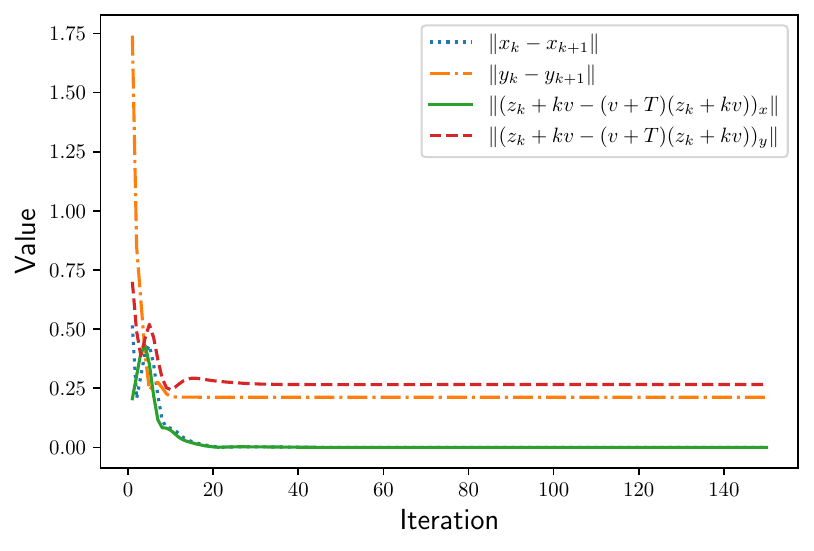}
    \includegraphics[scale=0.55]{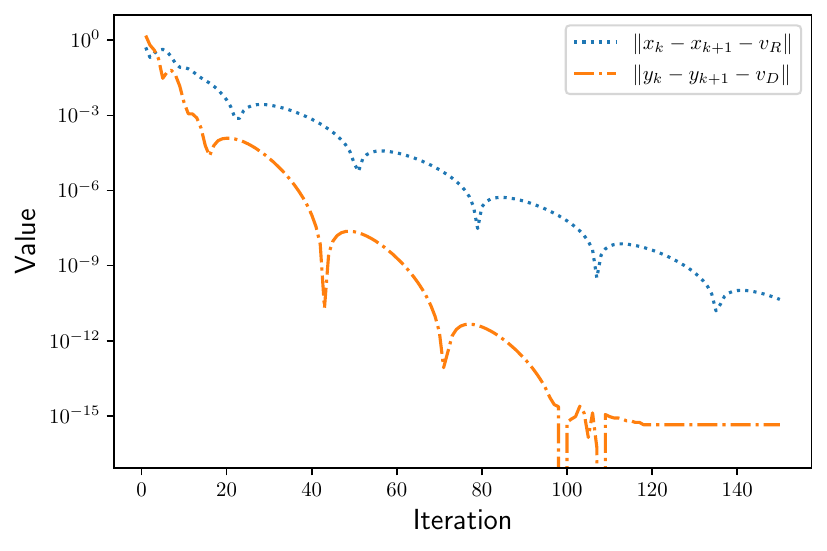}
    \caption{Python plots to illustrate the 
    iterative behavior of PDHG when applied to solve 
    \cref{ex:compQP}.
    Left: The first $150 $ terms of the sequences $(\norm{\x_k-\x_{k+1}})_\knn$ (the blue dotted curve)
    and $(\norm{\y_k-\y_{k+1}})_\knn$ (the orange dash-dotted curve) are depicted.
    Also, the first $150 $ terms of both 
    components of
    the sequence 
    $(\norm{\z_k+k\v-(\v+T)(\z_k+k\v)})_\knn$
    are depicted (the solid green curve and the dashed red curve). 
    Right: The first $150 $ terms  of the sequences $(\norm{\x_k-\x_{k+1}-\vx})_\knn$ (the  orange dotted curve)
    and $(\norm{\y_k-\y_{k+1}-\vy})_\knn$ (the blue  dotted curve) are depicted.}
  \label{fig2}  
\end{figure}

\subsection{Computing the infimal displacement vector} \label{subsec:constrained}
In this section, we derive a characterization of $\ran(\Id-T)$ for \cref{e:qpform} as the solution to a system of convex constraints in the case that $K$ is polyhedral.    
  A special case is $K=\R^m_-$, i.e., QP \cref{eq:qpprim}. This formula is useful in case one wants to compute with $\ran(\Id-T)$, e.g., the determination of the infimal displacement vector via an interior-point method.

We note that \cref{lem:ran:phd:v}\cref{lem:ran:phd:v:iv} already yields such a characterization.  However, a naive translation of  \cref{lem:ran:phd:v:iv} to a system of constraints introduces four auxiliary vectors. The following result shows that two auxiliary vectors (denoted $\w$ and $\y$ below) suffice.  

            \begin{lemma}\label{lem:quadratic_conic_V}
              In the setting of Problem \cref{e:qpform}, suppose that 
                $K$ is a polyhedral cone. Define 
            \begin{equation}\label{e:def:qp_V}
                V:=\left \{(\r, \d)\mbox{$\in X\times Y$}~\Bigg|~ \exists(\w,\y)\in X\times Y \;\;\mbox{\rm such that}\begin{array}{rl}
                &\frac{1}{\tau} \d - (\lin \w + \b) \in K,\\
                &\y - \d \in K^\ominus,\\
                &\frac{1}{\sigma}\r - H \r = -H \w + \lin^\ast \y + \c
            \end{array} \right\}.
            \end{equation}
            Then
            \begin{equation}\label{lem:eq:qp_V}
               V = M^{-1}(\ran (\partial F +S))=\ran(\Id-T).
            \end{equation}
            \end{lemma}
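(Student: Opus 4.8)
The plan is to reduce \cref{lem:eq:qp_V} to the single set equality $V=M^{-1}(\ran(\partial F+S))$, since $\ran(\Id-T)=M^{-1}(\ran(\partial F+S))$ already holds by \cref{prop:T:prop}\cref{prop:T:prop:iv}. First I would record the concrete shape of the operators involved in this setting: from $f(\x)=\tfrac12\scal{\x}{H\x}+\scal{\c}{\x}$ we have $\partial f=H(\cdot)+\c$, and from $g^*=\iota_{K^\ominus}+\scal{\b}{\cdot}$ (see \cref{eq:ff^*:cone}) we have $\partial g^*=N_{K^\ominus}(\cdot)+\b$ with $\dom\partial g^*=K^\ominus$; hence $\dom(\partial F+S)=X\times K^\ominus$ and
\[
(\partial F+S)(\x,\y)=\bigl(H\x+\c+\lin^*\y\bigr)\times\bigl(N_{K^\ominus}(\y)+\b-\lin\x\bigr),
\]
while $M(\r,\d)=\bigl(\tfrac1\sigma\r-\lin^*\d,\ \tfrac1\tau\d-\lin\r\bigr)$. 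I would also invoke, because $K$ is polyhedral, \cref{rem:polyhedK} (which generalizes \cref{lem:ran:phd:v}\cref{lem:ran:phd:v:i:i} to polyhedral cones) to get
\[
\ran(\partial F+S)=\bigl(\ran H+\lin^*(K^\ominus)+\c\bigr)\times\bigl(K+\ran\lin+\b\bigr).
\]

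For the inclusion $V\subseteq M^{-1}(\ran(\partial F+S))$ I would take $(\r,\d)\in V$ with witnesses $(\w,\y)$. Rearranging the third defining constraint of $V$ yields $\tfrac1\sigma\r-\lin^*\d=H(\r-\w)+\lin^*(\y-\d)+\c$, which lies in $\ran H+\lin^*(K^\ominus)+\c$ because $\y-\d\in K^\ominus$ by the second constraint; and the first constraint gives $\tfrac1\tau\d-\lin\r-\b=(\tfrac1\tau\d-\lin\w-\b)+\lin(\w-\r)\in K+\ran\lin$. Hence $M(\r,\d)\in\ran(\partial F+S)$. For the reverse inclusion I would \emph{not} use the product formula but unwind the definition of the range: if $M(\r,\d)\in\ran(\partial F+S)$ then there is $(\x,\y)\in X\times Y$ with $M(\r,\d)\in(\partial F+S)(\x,\y)$, i.e.\ $\y\in K^\ominus$, $\tfrac1\sigma\r-\lin^*\d=H\x+\c+\lin^*\y$, and $\tfrac1\tau\d-\lin\r-\b+\lin\x\in N_{K^\ominus}(\y)$. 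Setting $\w:=\r-\x$ and $\y':=\y+\d$, these three relations transform respectively into $\tfrac1\sigma\r-H\r=-H\w+\lin^*\y'+\c$, $\y'-\d\in K^\ominus$, and $\tfrac1\tau\d-\lin\w-\b\in N_{K^\ominus}(\y)\subseteq(K^\ominus)^\ominus=K$ (the last inclusion being the bipolar theorem for the closed convex cone $K$) — i.e.\ exactly the three constraints defining $V$, so $(\r,\d)\in V$. Chaining the two inclusions with \cref{prop:T:prop}\cref{prop:T:prop:iv} completes the proof.

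The only genuinely asymmetric point, and the one I would be most careful about, is that the two inclusions use two different descriptions of $\ran(\partial F+S)$: the forward inclusion relies on the \emph{product} formula for the range, whose validity ultimately rests on the $3^*$-monotonicity / Brezis--Haraux input already established in \cref{thm:ranform} and \cref{lem:ran:phd:v}, whereas the reverse inclusion uses only the raw definition of the range together with the elementary fact $N_{K^\ominus}(\y)\subseteq K$. Trying to run the reverse inclusion through the product formula instead would force one to reconstruct a pair $(\x,\y)$ with $\tfrac1\tau\d-\lin\r-\b+\lin\x$ \emph{orthogonal} to $\y$, a nonobvious feasibility question; routing through the definition of the range sidesteps it. Everything else in the argument is substitution and bookkeeping.
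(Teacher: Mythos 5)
Your proof is correct, and your reverse inclusion takes a genuinely different---and lighter---route than the paper's. The forward inclusion $V\subseteq M^{-1}(\ran(\partial F+S))$ is essentially identical in both arguments: rearrange the constraints of $V$ to land in the product set $R=(\ran H+\lin^*(K^\ominus)+\c)\times(K+\ran\lin+\b)$, which equals $\ran(\partial F+S)$ by \cref{rem:polyhedK}. For the reverse inclusion the paper \emph{also} starts from this product formula, and that is exactly what creates its main difficulty: an element of $R$ comes with independent witnesses ($\x$ for the first factor, $\u$ for the second), whereas membership in $V$ demands a single $\w$ serving both the first and third constraints. The paper re-couples them by solving the auxiliary quadratic program \cref{eq:auxqp}--\cref{eq:auxqpdual} and invoking strong duality (valid because $K$ is polyhedral) to produce witnesses with $H\x^*=H\u^*$. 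You sidestep all of this by unwinding the raw definition of $\ran(\partial F+S)$ as the union of the values $(\partial F+S)(\x,\y)$: there the two components automatically share the same $\x$, the substitutions $\w:=\r-\x$ and $\y':=\y+\d$ are mechanical, and the only nontrivial input is $N_{K^\ominus}(\y)\subseteq(K^\ominus)^\ominus=K$, which holds for any closed convex cone. This removes the strong-duality machinery entirely and confines the use of polyhedrality to the forward direction, where it is still genuinely needed to upgrade $R\subseteq\cran(\partial F+S)$ to $R=\ran(\partial F+S)$. One small quibble: the obstruction you describe for running the reverse inclusion through the product formula (orthogonality to $\y$) is the graph-level shadow of the real issue, which is the decoupling of $\x$ from $\u$; but your diagnosis that the raw definition of the range sidesteps it is exactly right, and every computation in your argument checks out.
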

            \begin{proof}
            In view of \cref{rem:polyhedK}, it is sufficient to verify the 
            first identity in \cref{lem:eq:qp_V}.
              Let $(\r, \d) \in V$. Then there exists $ (\w, \y)\in X\times Y$ such that 
              \begin{equation}
              \begin{array}{rl}
              &\frac{1}{\sigma}\r - \lin^\ast \d = H \r -H \w + \lin^\ast \y + \c  - \lin^\ast \d \in \ran H + \lin^\ast(K^\ominus) + \c,\\
              &\frac{1}{\tau}\d - \lin \r = \frac{1}{\tau}\d - (\lin \w + \b) + (\lin \w + \b) - \lin \r \in K + \ran \lin + \b.\\
              \end{array}
              \end{equation}
              Recalling \cref{rem:polyhedK}, for simplicity we set $R :=\ran (\partial F+S)
              =(\ran H + \lin^\ast(K^\ominus) + \c) \times (K + \ran \lin + \b)$.
              The inclusion $V \subseteq M^{-1}R$ follows from the definition of $M$ given in \cref{eq:def:M} and the nonsingularity of $M$ due to the choices of $\tau, \sigma$. We now show that $ M^{-1}R\subseteq V$.
              Indeed let $\z:=(\z_1,\z_2) \in R$.
Then there exist $\x \in X, \s \in K^\ominus, \t \in K, \u \in X$ such that
              \begin{equation}
              \label{e:lem:qp_rform:aux}
                  \z  =(\z_1,\z_2)= (H\x + \lin^\ast \s + \c, \t - \lin \u + \b) .
              \end{equation}
              We claim that there exist $\x^* \in X, \s^* \in K^\ominus, \t^* \in K, \u^* \in X$ such that
              \begin{equation}
              \label{e:lem:qp_rform}
                  \z  = (H\x^* + \lin^\ast \s^* + \c, \t^* - \lin \u^* + \b) 
                  \;\;\text{and}\;\; H\x^*=H\u^*.
              \end{equation}
To this end  consider the problem:
              \begin{equation}
                \begin{array}{rl}
                \minimize{\overline \u\in X}& \tfrac{1}{2} \scal{\overline \u}{H \overline \u}
                - \scal{\z_1-\c}{ \overline \u} \\
                \mbox{\rm subject to}& \lin \overline \u - \b+\z_2 \in K.
                \end{array}
                \label{eq:auxqp}
            \end{equation}
            Standard techniques yield that the Lagrangian dual of \cref{eq:auxqp} is
            \begin{equation}
            \begin{array}{rl}
            \maximize{\bar\s\in Y,\bar\x\in X} & - \frac{1}{2} \scal{\overline \x}{H \overline \x} - \scal{\b-\z_2}{ \overline \s}\\
            \mbox{\rm subject to} & - \lin^\ast \overline \s + \z_1 - \c = H\overline \x\\
            & \overline \s \in K^\ominus.
            \end{array}
            \label{eq:auxqpdual}
            \end{equation}
            It follows from \cref{e:lem:qp_rform:aux}
            that  $\u$ satisfies the primal constraint and  the pair $(\x, \s)$ satisfies the dual constraints.
            Therefore, because  $K$ is a polyhedral cone,
            strong duality holds for the primal-dual problem \cref{eq:auxqp}--\cref{eq:auxqpdual}
            (see, e.g., \cite[Comment~on~Page~227]{BV}). 
            Let $(\u^*, (\x^*, \s^*))$ denote its primal--dual optimal solution. 
            Then there exists $(\t^* , \s^*)\in K\times K^\ominus$ such that 
            \begin{equation}
            \label{eq:pdfeas}
            \text{$\z_1 = H \x^* + \lin^\ast \s^* + \c$\; (dual feasibility) \;and\;   $\z_2 = \t^* - \lin \u^* + \b$\; (primal feasibility).}
            \end{equation}
            Moreover, strong duality and KKT conditions imply that 
$            \bzero = H \u^* - (\z_1 - \c) + \lin^\ast \s^* 
            = H  \u^* - H  \x^*.$
           This proves  \cref{e:lem:qp_rform}.
            Now define 
            $(\r, \d) := M^{-1} \z$, i.e., 
            $(\z_1,\z_2)= M(\r,\d)=\left(\tfrac{1}{\sigma}\r - \lin^\ast \d, \tfrac{1}{\tau}\d - \lin \r \right)$. This and \cref{eq:pdfeas} implies 
\begin{equation}
\label{e:zrd}
\left(\tfrac{1}{\sigma}\r , \tfrac{1}{\tau}\d  \right)=( H \x^* + \lin^\ast \s^* + \c+\lin^\ast \d,\t^* - \lin \u^* + \b+\lin \r).
\end{equation}
Define
            $\w:=\r - \u^*$ and $\y:=\d + \s^*$. In view of  \cref{e:lem:qp_rform}, \cref{e:zrd} and 
            \cref{eq:pdfeas}
            we have 
              $$
              \begin{array}{rl}
                &\frac{1}{\tau} \d - (\lin \w + \b) = \lin \r+ \t^* - \lin \u^* + \b - (\lin \w + \b) = \t^* \in K\\
                &\y - \d = \s^* \in K^\ominus\\
                &\frac{1}{\sigma}\r - H \r = \lin^\ast \d + H\x^* + \lin^\ast \s^* + \c - H \r 
                = -H \w + \lin^\ast \y + \c,
              \end{array}
              $$
              which implies that $(\r, \d) \in V$. 
              The inclusion $M^{-1} R \subseteq V$ follows from the construction $(\r, \d) = M^{-1} \z$. 
          \end{proof}
        		
\section{Application to standard conic primal form}
  \label{sec:standconic}
		In this section, we  {consider problems of the form \cref{e:esform}} under the assumptions
		\begin{empheq}[box=\mybluebox]{equation}
			\text{$C$ is a nonempty closed convex cone of $X$,  $\c\in X$, and  {$K=\{\bzero\}$}.}
		\end{empheq}
		
		In other words, the problem under consideration is:
		\begin{equation}
			\begin{array}{rl}
				\label{eq:c0:problem}
				\minimize{x\in C} & \scal{\c}{\x} \\
				\mbox{\rm subject to} &\lin \x-\b=\bzero.
			\end{array}
		\end{equation}
Problem~\cref{eq:c0:problem} is commonly known as standard conic primal form since it generalizes linear programming in standard equality form, which takes $C=\R^n_+$.  However, the results in this section extend beyond LP since polyhedrality is not assumed.
{Specializing} \cref{e:impt:xup:es}, the PDHG update to solve 
		\cref{eq:c0:problem} is 
		\begin{equation}
			\label{e:pdhg:CP:update:K0}
			\begin{pmatrix}
				\x^+\\
				\y^+
			\end{pmatrix}=	T	
			\begin{pmatrix}
				\x\\
				\y
			\end{pmatrix}
			:=	\begin{pmatrix}
				P_{C}(\x - \sigma \lin^* \y-\c)\\
				\y+\tau \lin (2\x^+-\x)-\tau \b
			\end{pmatrix}.
		\end{equation}

\begin{lemma}
\label{lem:C00}
For \cref{e:pdhg:CP:update:K0} we have 
			\begin{equation}
				\cran(\Id-T)=M^{-1}\Big((\overline{ C^\ominus+\ran \lin^*}+\c)\times (\b-{\overline{\lin(C)}})\Big) . 
    \label{eq:lem:c00}
			\end{equation}
		\end{lemma}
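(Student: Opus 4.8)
The plan is to apply the master formula \cref{prop:T:prop}\cref{prop:T:prop:vi}; everything then reduces to identifying the four domains $\dom f$, $\dom f^*$, $\dom g$, $\dom g^*$ for the present specialization. First I would record the data. In the notation of \cref{sec:conicpdhg}, taking $K=\{\bzero\}$ in \cref{eq:ff^*:cone} gives $K^\ominus=Y$, so $g=\iota_{\{\bzero\}}(\cdot-\b)=\iota_{\{\b\}}$ and $g^*=\iota_Y+\scal{\b}{\cdot}=\scal{\b}{\cdot}$; hence $\dom g=\{\b\}$ and $\dom g^*=Y$. From \cref{e:def:fcC}, $f=\scal{\c}{\cdot}+\iota_C$, so $\dom f=C$; and $f^*(\u)=\sup_{\x\in C}\scal{\u-\c}{\x}$, which equals $0$ when $\u-\c\in C^\ominus$ and $+\infty$ otherwise because $C$ is a cone, so $\dom f^*=C^\ominus+\c$.

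Next I would substitute these into \cref{prop:T:prop}\cref{prop:T:prop:vi}, that is,
\[
\cran(\Id-T)=M^{-1}\Big(\overline{\big(\dom f^*+\lin^*(\dom g^*)\big)}\times\overline{\big(\dom g-\lin(\dom f)\big)}\Big),
\]
and compute $\dom f^*+\lin^*(\dom g^*)=(C^\ominus+\c)+\lin^*(Y)=C^\ominus+\ran\lin^*+\c$ together with $\dom g-\lin(\dom f)=\{\b\}-\lin(C)=\b-\lin(C)$. Because translation by a fixed vector is a homeomorphism, closure commutes with it, so $\overline{C^\ominus+\ran\lin^*+\c}=\overline{C^\ominus+\ran\lin^*}+\c$ and $\overline{\b-\lin(C)}=\b-\overline{\lin(C)}$, which yields \cref{eq:lem:c00}.

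There is no genuine obstacle in this argument: it is bookkeeping of conjugates followed by two homeomorphism-invariance facts for closures, and the fact that $M^{-1}$ is a linear homeomorphism (\cref{prop:M}\cref{prop:M:i:ii}) makes the right-hand side automatically closed, consistent with $\cran(\Id-T)$ being closed. The only two points that merit attention are that $K^\ominus=Y$ — so that $\lin^*(\dom g^*)$ is the full range $\ran\lin^*$ rather than a translate of it — and that $\dom g$ collapses to the singleton $\{\b\}$; both are read off directly from \cref{eq:ff^*:cone} and \cref{e:def:fcC}.
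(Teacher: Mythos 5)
Your proposal is correct and follows essentially the same route as the paper: both reduce the claim to \cref{prop:T:prop}\cref{prop:T:prop:vi} and then identify $\dom f=C$, $\dom g=\{\b\}$, $\dom g^*=Y$, and $\dom f^*$. The only (harmless) difference is in the last identification: you compute the conjugate directly, $f^*=\iota_{C^\ominus}(\cdot-\c)$, hence $\dom f^*=C^\ominus+\c$, whereas the paper obtains $\overline{\dom}\,f^*=\c+C^\ominus$ indirectly via $\overline{\dom}\,\partial f^*=\overline{\ran}\,\partial f=\overline{\ran}(\c+N_C)=\c+C^\ominus$; your version is slightly more elementary and gives the exact domain rather than only its closure, and the remaining closure/translation bookkeeping is the same in both arguments.
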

		\begin{proof}
			Recalling \cref{eq:ff^*:cone}   and \cref{e:def:fcC}, on the one hand we have 
			\begin{equation}
				\label{e:domsi0}
				\text{$\dom f=C$, $\dom g=\{\b\}$
					and $\dom g^*=X$.}
			\end{equation}
			On the other hand,
			it follows from 
			\cite[Corollary~16.39~and~Corollary~16.30]{BC2017},
			\cite[Theorem~3.1]{Zara}
    and \cite[Corollary~6.50]{BC2017}
			that 
			\begin{subequations}
				\label{e:domsii0}
				\begin{align}
					\overline{\dom}\ f^*
					&=\overline{\dom}\ (\scal{\c}{\x}+\iota_C)^*=\overline{\dom}\ \partial (\scal{\c}{\x}+\iota_C)^*
					\\
					&=\overline{\ran}\ \partial(\scal{\c}{\x}+\iota_C)
					=\overline{\ran}\ (\c+N_C)
					\\
					&=\c+\overline{\ran}\ N_C=\c+  C^\ominus . 
				\end{align}
			\end{subequations}
			Now combine \cref{e:domsi0}, \cref{e:domsii0}
			and \cref{prop:T:prop}\cref{prop:T:prop:vi}.
		\end{proof}

The following lemma, analogous to \cref{lem:quadratic_conic_V}, presents a parsimonious description of $\cran(\Id-T)$ via constraints in the context of the standard conic primal form.
\begin{lemma}
\label{lem:conic_V}
		    In the setting of Problem 
\cref{e:pdhg:CP:update:K0}, define
                $R :=((C^\ominus + \ran \lin^\ast + \c) \times (\b - \lin(C)),$ and
              \begin{equation}\label{e:def:V}
                V:=\left \{(\r, \d)\mbox{$\in X\times Y$}~\Bigg|~ \mbox{$\exists (\w,\y)\in X\times Y$ \;\;{\rm such that}} \begin{array}{rl}
                &\frac{1}{\tau} \d = \lin \w + \b,  \\
                &\r - \w \in C,\\
                &\frac{1}{\sigma}\r - (\lin^\ast \y + \c) \in C^\ominus
            \end{array} \right\}.
            \end{equation}
            Then
            \begin{equation}\label{lem:eq:conic_V}
                V = M^{-1}R. \qquad 
            \end{equation}
		\end{lemma}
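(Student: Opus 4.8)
The plan is to establish the two set inclusions $V\subseteq M^{-1}R$ and $M^{-1}R\subseteq V$ separately, in the same spirit as the proof of \cref{lem:quadratic_conic_V}. The key point---and the reason the argument is considerably shorter than in the quadratic case---is that here the objective is \emph{linear} on $C$, so there is no Hessian term $H$ to reconcile and no strong-duality or KKT argument is needed: the required auxiliary vectors can simply be written down.

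For $V\subseteq M^{-1}R$, I would take $(\r,\d)\in V$ with witnesses $(\w,\y)$ as in \cref{e:def:V}, recall that $M(\r,\d)=\big(\tfrac{1}{\sigma}\r-\lin^*\d,\ \tfrac{1}{\tau}\d-\lin\r\big)$, and rewrite each component. The first component equals $\big(\tfrac{1}{\sigma}\r-\lin^*\y-\c\big)+\lin^*(\y-\d)+\c$, which lies in $C^\ominus+\ran\lin^*+\c$ by the third constraint defining $V$; the second component equals $\b-\lin(\r-\w)$ (using $\tfrac{1}{\tau}\d=\lin\w+\b$), which lies in $\b-\lin(C)$ because $\r-\w\in C$. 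Hence $M(\r,\d)\in R$, i.e., $(\r,\d)\in M^{-1}R$.

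For $M^{-1}R\subseteq V$, I would start from $(\r,\d)$ with $\z:=M(\r,\d)=(\z_1,\z_2)\in R$, decompose $\z_1=\t+\lin^*\s+\c$ with $\t\in C^\ominus$ and $\s\in Y$, and $\z_2=\b-\lin\u$ with $\u\in C$, and then define $\w:=\r-\u$ and $\y:=\s+\d$. A short verification using $\tfrac{1}{\sigma}\r-\lin^*\d=\z_1$ and $\tfrac{1}{\tau}\d-\lin\r=\z_2$ then shows $\r-\w=\u\in C$, that $\tfrac{1}{\tau}\d=\b+\lin(\r-\u)=\lin\w+\b$, and that $\tfrac{1}{\sigma}\r-(\lin^*\y+\c)=\t\in C^\ominus$; hence $(\r,\d)\in V$. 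Combining the two inclusions yields $V=M^{-1}R$.

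There is no genuine obstacle in this argument---each step is an elementary algebraic identity---and the only care needed is bookkeeping: matching the two summands of $R$ (one in each coordinate of $\z$) with the two auxiliary vectors $\u$ and $\s$ out of which $\w$ and $\y$ are built. For contrast, in the quadratic analogue \cref{lem:quadratic_conic_V} the presence of $H$ forces one to invoke strong duality to obtain a primal optimal $\u^*$ with $H\u^*=H\x^*$; here that step is unnecessary.
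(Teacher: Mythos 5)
Your proposal is correct and follows essentially the same route as the paper's proof: both directions are handled by the same algebraic rewriting of the components of $M(\r,\d)$, and in the reverse inclusion you construct the witnesses $\w=\r-\u$ and $\y=\d+(\text{the }\ran\lin^*\text{ term})$ exactly as the paper does (up to swapping the names $\s$ and $\t$). Your closing remark contrasting this with the strong-duality step needed in \cref{lem:quadratic_conic_V} is also consistent with the paper's own commentary.
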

          \begin{proof}
Let $(\r, \d) \in V$. Then there exists $ \w, \y$ 
              such that 
              $$
              \begin{array}{rl}
              &\frac{1}{\sigma}\r - \lin^\ast \d = \frac{1}{\sigma}\r  - (\lin^\ast \y + \c) + (\lin^\ast \y + \c) - \lin^\ast \d \in C^\ominus + \ran \lin^\ast + \c,\\
              &\frac{1}{\tau}\d - \lin \r 
              = \lin (\w - \r) + \b \in \b - \lin (C).\\
              \end{array}$$
              The inclusion $V \subseteq M^{-1}R$ follows from the definition of $M$  (see \cref{eq:def:M}).  We now show that 
              $M^{-1}R\subseteq V$. To this end let $\z \in R$ and define $(\r, \d) := M^{-1} \z$.
              Then there exist $\s \in C^\ominus, \t \in Y, \u \in C$ such that
              \begin{equation}\label{e:lem:rform}
                  \z = (\s + \lin^\ast \t + \c, \b - \lin \u) = \left(\tfrac{1}{\sigma}\r - \lin^\ast \d, \tfrac{1}{\tau}\d - \lin \r \right)=M(\r,\d).
              \end{equation}
 Define $\w:=\r - \u$ and $\y:=\d + \t$. 
 Combining this with \cref{e:lem:rform} yields
              $$
              \begin{array}{rl}
                  &\frac{1}{\tau} \d = \lin \r + \b - \lin \u 
                  = \lin \w + \b  \\
                  &\r - \w = \u \in C\\
                  &\frac{1}{\sigma}\r - (\lin^\ast \y + \c) = \lin^\ast \d + \s + \lin^\ast \t + \c - (\lin^\ast \y + \c) = \s + \lin^\ast \y + \c  - (\lin^\ast \y + \c) \in C^\ominus,
              \end{array}
              $$
              which implies that $M^{-1}\z=(\r, \d) \in V$. This completes the proof.
          \end{proof}

\begin{remark}
          Let $R$ and $V$ be defined as in \cref{lem:conic_V}.
          In view of  
          \cref{eq:lem:c00} it is clear that
          $\cran(\Id-T)=\overline{V}$.
\end{remark}

  \subsection{Special case: \texorpdfstring{$\ker\lin\cap C=\{\bzero\}$}{$\ker\lin \cap C = \{0\}$}}
 In this section, we establish that  $\v\in\ran(\Id-T)$ for a subclass of \cref{eq:c0:problem}.  The general version of our result is in \cref{p:ellpsepgenv}, and then the general result is specialized to a  particular problem class in \cref{sec:ellsep}.   We start with the following useful lemma.
		\begin{lemma}
			\label{lem:ncsets}
			Suppose $X$ finite-dimensional and 
			that  $C_1$ and $C_2$ are nearly convex\footnote{Suppose that $X$ is finite-dimensional.
				A subset $E$ of $X $ is \emph{nearly convex} if there exists a convex set $C\subseteq X$
				such that $C\subseteq E\subseteq \overline{C}$.}  subsets of $X$
			such that 
			$\overline{C_1}=\overline{C_2}$. Then the following hold.
			\begin{enumerate}
				\item 
				\label{lem:ncsets:i}
				$\ri{C_1}=\ri{C_2}$.
				\item 
				\label{lem:ncsets:ii}
				Suppose that 
				$(\exists i\in \{1,2\})$ 
				$C_i=X$.
				Then $C_1=C_2=X$.
			\end{enumerate}	
		\end{lemma}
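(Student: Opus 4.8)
The plan is to reduce both assertions to the classical behaviour of relative interiors of \emph{convex} sets under closure. For $i\in\{1,2\}$ I would first fix a convex set $D_i\subseteq X$ witnessing near convexity, i.e.\ $D_i\subseteq C_i\subseteq\overline{D_i}$. Taking closures yields $\overline{D_i}\subseteq\overline{C_i}\subseteq\overline{\overline{D_i}}=\overline{D_i}$, hence $\overline{D_i}=\overline{C_i}$; combined with the hypothesis $\overline{C_1}=\overline{C_2}$ this gives $\overline{D_1}=\overline{D_2}$. I would then recall that the relative interior of a nearly convex set $E$ satisfies $\ri E=\ri\overline{E}$ (this is the characterization that makes $\ri E$ independent of the choice of convex witness, and it agrees with the usual relative interior when $E$ is itself convex); in particular $\ri C_i=\ri D_i=\ri\overline{D_i}$.

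Next I would invoke the classical finite-dimensional fact that the relative interior of a nonempty convex set depends only on its closure, $\ri D=\ri\overline{D}$ for every nonempty convex $D\subseteq X$ (see, e.g., \cite[Theorem~6.3]{Rock70}); the degenerate case $D=\varnothing$ is trivial, since then $\overline{C_1}=\overline{C_2}=\varnothing$ forces $C_1=C_2=\varnothing$ and the statement is vacuous. Chaining the displayed equalities gives
$\ri C_1=\ri D_1=\ri\overline{D_1}=\ri\overline{D_2}=\ri D_2=\ri C_2$, which is \cref{lem:ncsets:i}.

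For \cref{lem:ncsets:ii}, by symmetry assume $C_1=X$. Since $X$ is a finite-dimensional linear space, $\aff X=X$, so $\ri X=\inte X=X$; as $C_1=X$ is convex, $\ri C_1=X$. By \cref{lem:ncsets:i} we then get $\ri C_2=\ri C_1=X$. On the other hand, using a convex witness $D_2$ of $C_2$ as above, $\ri C_2=\ri D_2\subseteq D_2\subseteq C_2\subseteq X$, which forces $C_2=X$; together with $C_1=X$ this yields the claim.

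The mathematical substance is entirely carried by the cited convex-analysis identity $\ri D=\ri\overline{D}$; the only points needing care are the bookkeeping around the fact $\ri C_i=\ri\overline{C_i}$ for the (possibly non-convex) nearly convex sets, and uniform handling of the degenerate situations ($C_i=\varnothing$, or $C_i$ not full-dimensional)—both of which the argument above absorbs without extra work. I therefore expect no genuine obstacle here beyond citing the right statement for near-convex relative interiors.
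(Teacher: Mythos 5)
Your proof is correct. For \cref{lem:ncsets:ii} your argument coincides with the paper's: from $C_1=X$ and part \cref{lem:ncsets:i} one gets $X=\ri X=\ri C_1=\ri C_2\subseteq C_2\subseteq X$, forcing $C_2=X$. For \cref{lem:ncsets:i} the paper simply cites \cite[Proposition~2.12]{BMWnear}, whereas you unfold that citation: you pass to convex witnesses $D_i$ with $\overline{D_1}=\overline{C_1}=\overline{C_2}=\overline{D_2}$ and chain $\ri C_i=\ri D_i=\ri\overline{D_i}$ via the classical identity $\ri D=\ri\overline{D}$ for convex $D$. The one step you invoke without proof, $\ri C_i=\ri D_i$ (equivalently $\ri E=\ri\overline{E}$ for nearly convex $E$), is precisely the content of the cited proposition; it does follow from your setup, since $D_i\subseteq C_i\subseteq\overline{D_i}$ all have the same affine hull, so interiors relative to that common hull are monotone and $\ri D_i\subseteq\ri C_i\subseteq\ri\overline{D_i}=\ri D_i$. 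In short, the two proofs carry the same mathematical content; yours is self-contained where the paper defers to the literature, and your handling of the degenerate empty case is harmless.
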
	
		\begin{proof}
			\cref{lem:ncsets:i}:
			This is \cite[Proposition~2.12]{BMWnear}.
			\cref{lem:ncsets:ii}:
			Indeed, without loss of generality suppose that $C_1=X$.
			Observe that  	\cref{lem:ncsets:i} implies
			$X=\ri X=\ri C_1=\ri C_2\subseteq C_2\subseteq X$.
			Hence, $C_2=X$ as claimed. 
		\end{proof}	
		
		We now have the following corollary which will be used in the sequel.
		\begin{corollary}
			Suppose that
			$X$ is finite-dimensional and that 
			$K_1$ and $K_2$
			are closed convex cones of $X$
			such that $K_1\cap K_2=\{\bzero\}$.
			Then 
			$K_1^\ominus +K_2^\ominus=X$. 	
		\end{corollary}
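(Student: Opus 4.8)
The plan is to first establish that the convex cone $K_1^\ominus+K_2^\ominus$ is dense in $X$, and then to upgrade density to equality by invoking finite-dimensionality through \cref{lem:ncsets}\cref{lem:ncsets:ii}.

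For the density step, I would use two standard facts about polar cones: first, for cones $A,B$ (both containing $\bzero$) one has $(A+B)^\ominus=A^\ominus\cap B^\ominus$; second, for a closed convex cone $K$ the bipolar theorem gives $K^{\ominus\ominus}=K$ (see, e.g., \cite{Rock70,BC2017}). Since $K_1^\ominus$ and $K_2^\ominus$ are closed convex cones, combining these facts with the hypothesis that $K_1,K_2$ are closed and convex yields
\[
\big(K_1^\ominus+K_2^\ominus\big)^\ominus=K_1^{\ominus\ominus}\cap K_2^{\ominus\ominus}=K_1\cap K_2=\{\bzero\}.
\]
Taking polars once more, and recalling that the bipolar of a convex cone equals its closure, one obtains
\[
\overline{K_1^\ominus+K_2^\ominus}=\big(K_1^\ominus+K_2^\ominus\big)^{\ominus\ominus}=\{\bzero\}^\ominus=X.
\]

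For the upgrade step, set $C_1:=K_1^\ominus+K_2^\ominus$ and $C_2:=X$. Both sets are convex, hence nearly convex (take the convex set witnessing near convexity to be the set itself), and the previous step gives $\overline{C_1}=X=\overline{C_2}$. Since $C_2=X$, \cref{lem:ncsets}\cref{lem:ncsets:ii} forces $C_1=C_2=X$; that is, $K_1^\ominus+K_2^\ominus=X$, as claimed. Equivalently, one could argue directly that a nonempty convex subset of a finite-dimensional space whose closure is the whole space already equals the whole space, since its relative interior coincides with the relative interior of its closure.

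I do not anticipate any real obstacle. The only subtlety worth flagging is that finite-dimensionality is genuinely used in the upgrade step — in an infinite-dimensional setting $K_1^\ominus+K_2^\ominus$ could be a dense proper subspace — and it enters precisely via \cref{lem:ncsets}, which in turn rests on \cite{BMWnear}.
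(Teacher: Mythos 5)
Your proposal is correct and follows essentially the same route as the paper: establish $\overline{K_1^\ominus+K_2^\ominus}=(K_1\cap K_2)^\ominus=\{\bzero\}^\ominus=X$ (the paper cites \cite[Corollary~16.4.2]{Rock70} for this, whereas you unpack it via the bipolar theorem), and then upgrade density to equality using \cref{lem:ncsets}\cref{lem:ncsets:ii} with $C_1=K_1^\ominus+K_2^\ominus$ and $C_2=X$, exactly as the paper does.
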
	
		\begin{proof}
			Indeed, it follows from 
			\cite[Corollary~16.4.2.]{Rock70}
			that 
			$\overline{K_1^\ominus +K_2^\ominus}=(K_1\cap K_2)^\ominus=\{\bzero\}^\ominus=X$.
			Now combine this with \cref{lem:ncsets}\cref{lem:ncsets:ii}
			applied with $C_1=K_1^\ominus +K_2^\ominus$ and $C_2=X$.	
		\end{proof}

		\begin{lemma}
			\label{lem:Cinf}
 Recalling \cref{e:pdhg:CP:update:K0},
		for Problem~\cref{eq:c0:problem}, the   following hold:
			\begin{enumerate}
				\item
				\label{lem:C0:iv:aa}
				${\vx}\in -C$.
				\item 
				\label{lem:C0:iv:b}
				$\sigma \lin^*{\vy}=P_{C^\ominus}(-{\vx}+\sigma\lin^*{\vy})\in C^\ominus$.
				\item 
				\label{lem:C0:iv:b2}
				$-{\vx}=P_{C}(-{\vx}+\sigma\lin^*{\vy})\in C$.
				\item 
				\label{lem:C0:iv:c}
				${\vy}\in  (\lin(C))^\ominus$.
				\item 
				\label{lem:C0:iv:dd}
				Suppose that $\ker \lin\cap C=\{\bzero\}$. Then 
				\begin{enumerate}
					\item 
					\label{lem:C0:iv:a}
					$\vx=\bzero$.
					\item 
					\label{lem:C0:iv:d:a}
					$\overline{C^\ominus+\ran \lin^*}=X$.
				\end{enumerate}
				
			\end{enumerate}
		\end{lemma}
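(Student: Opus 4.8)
The plan is to follow the template of the proof of \cref{lem:qp:v:K}: pass to the limit along a PDHG orbit and then invoke the Moreau decomposition, exploiting throughout that $C$ is a cone. Fix $(\x_0,\y_0)\in X\times Y$ and let $((\x_k,\y_k))_\knn$ be generated by $(\x_{k+1},\y_{k+1})=T(\x_k,\y_k)$ with $T$ as in \cref{e:pdhg:CP:update:K0}. Since $T$ is firmly nonexpansive by \cref{prop:T:prop}\cref{prop:T:prop:ii}, \cref{fact:T:v} (Pazy; Baillon--Bruck--Reich) gives $\x_k-\x_{k+1}\to\vx$, $\y_k-\y_{k+1}\to\vy$, $\x_k/k\to-\vx$ and $\y_k/k\to-\vy$; consequently also $\x_k/(k+1)\to-\vx$, $\y_k/(k+1)\to-\vy$, $\x_{k+1}/(k+1)\to-\vx$ and $\y_{k+1}/(k+1)\to-\vy$.

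To obtain \cref{lem:C0:iv:b2} (and hence \cref{lem:C0:iv:aa}) I would use the first coordinate of \cref{e:pdhg:CP:update:K0}, namely $\x_{k+1}=P_C(\x_k-\sigma\lin^*\y_k-\c)$. Because $C$ is a cone, $P_C$ is positively homogeneous (\cite[Theorem~5.6(7)]{Deutsch}) and, being nonexpansive (\cite[Proposition~4.16]{BC2017}), continuous; thus $\x_{k+1}/(k+1)=P_C\big((\x_k-\sigma\lin^*\y_k-\c)/(k+1)\big)$. Since $\x_k/(k+1)\to-\vx$, $\lin^*\big(\y_k/(k+1)\big)\to-\lin^*\vy$ by continuity of $\lin^*$, and $\c/(k+1)\to\bzero$, the argument of $P_C$ converges to $-\vx+\sigma\lin^*\vy$, so $\x_{k+1}/(k+1)\to P_C(-\vx+\sigma\lin^*\vy)$. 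Comparing with $\x_{k+1}/(k+1)\to-\vx$ yields $-\vx=P_C(-\vx+\sigma\lin^*\vy)\in C$, which is \cref{lem:C0:iv:b2}, and in particular $\vx\in-C$, i.e.\ \cref{lem:C0:iv:aa}.

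Next, writing $\z:=-\vx+\sigma\lin^*\vy$, the Moreau decomposition \cite[Theorem~6.30(i)]{BC2017} gives $\z=P_C\z+P_{C^\ominus}\z$, whence $P_{C^\ominus}(-\vx+\sigma\lin^*\vy)=\z-P_C\z=\sigma\lin^*\vy\in C^\ominus$; this is \cref{lem:C0:iv:b}. For \cref{lem:C0:iv:c}, since $C^\ominus$ is a cone and $\sigma>0$, \cref{lem:C0:iv:b} forces $\lin^*\vy\in C^\ominus$, and $\lin^*\vy\in C^\ominus$ is precisely the statement that $\scal{\lin\u}{\vy}\le0$ for every $\u\in C$, i.e.\ $\vy\in(\lin(C))^\ominus$.

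For \cref{lem:C0:iv:dd}: dividing the second coordinate of \cref{e:pdhg:CP:update:K0}, $\y_{k+1}=\y_k+\tau\lin(2\x_{k+1}-\x_k)-\tau\b$, by $k+1$ and letting $k\to\infty$ gives $-\vy=-\vy-\tau\lin\vx$, hence $\lin\vx=\bzero$ (equivalently, this is \cref{lem:qp:v:K}\cref{lem:C0:ii} with $K=\{\bzero\}$). Together with $-\vx\in C$ this gives $-\vx\in\ker\lin\cap C=\{\bzero\}$ under the standing hypothesis, so $\vx=\bzero$, which is \cref{lem:C0:iv:a}. Finally, taking polars in $\ker\lin\cap C=\{\bzero\}$ and using $(K_1\cap K_2)^\ominus=\overline{K_1^\ominus+K_2^\ominus}$ for closed convex cones (\cite[Corollary~16.4.2]{Rock70}), $(\ker\lin)^\ominus=(\ker\lin)^\perp=\overline{\ran\lin^*}$, and $\overline{A+B}=\overline{A+\overline{B}}$, one obtains $\overline{C^\ominus+\ran\lin^*}=\overline{C^\ominus+(\ker\lin)^\ominus}=(C\cap\ker\lin)^\ominus=\{\bzero\}^\ominus=X$, which is \cref{lem:C0:iv:d:a}. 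The only delicate point is the limit interchange in the second paragraph: one must ensure $P_C$ commutes with the limit (it does, by continuity together with positive homogeneity) and that each normalized summand converges as claimed; for \cref{lem:C0:iv:d:a} one should also keep in mind that $\ran\lin^*$ need not be closed in infinite dimensions, which is why the claim is stated with a closure.
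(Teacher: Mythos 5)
Your argument is correct, and it follows the same overall template as the paper's proof: pass to the limit along a PDHG orbit via Pazy and Baillon--Bruck--Reich, exploit positive homogeneity and continuity of the cone projections, and finish \cref{lem:C0:iv:d:a} by taking polars. The one genuine structural difference is in how \cref{lem:C0:iv:aa}--\cref{lem:C0:iv:b2} are obtained. The paper first gets \cref{lem:C0:iv:aa} from the observation that $(\x_k)_\knn$ lies in the closed cone $C$, then derives \cref{lem:C0:iv:b} by rewriting the $\x$-update through the Moreau decomposition \emph{before} passing to the limit, and only then deduces \cref{lem:C0:iv:b2} by combining \cref{lem:C0:iv:aa}, \cref{lem:C0:iv:b}, the orthogonality $\scal{\lin\vx}{\vy}=0$ from \cref{lem:qp:v:K}\cref{lem:qp:K:v:v}, and the characterization of projections onto a cone and its polar in \cite[Proposition~6.28]{BC2017}. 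You instead take the limit in $\x_{k+1}/(k+1)=P_C\big((\x_k-\sigma\lin^*\y_k-\c)/(k+1)\big)$ directly, which delivers \cref{lem:C0:iv:b2} in one step; \cref{lem:C0:iv:aa} and, via Moreau, \cref{lem:C0:iv:b} then fall out as corollaries, and neither the orthogonality relation nor \cite[Proposition~6.28]{BC2017} is needed. This is slightly more economical. The remaining items match the paper's proof: for \cref{lem:C0:iv:c} the paper packages your pointwise argument as \cite[Proposition~6.37(ii)]{BC2017}, and for \cref{lem:C0:iv:a} it quotes $\lin\vx=\bzero$ from \cref{lem:qp:v:K}\cref{lem:C0:ii} rather than re-deriving it from the $\y$-update, but these are cosmetic differences. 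Your closing caveats about the limit interchange and the possible non-closedness of $\ran\lin^*$ are both handled correctly.
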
	
		\begin{proof}
			Let $\z_0=(\x_0,\y_0)\in X\times Y$.
			Update via $\z_{k+1}=T\z_k$, where $T$
			is defined as in \cref{e:pdhg:CP:update:K0}.
			Then 
			\begin{equation}
				\label{e:zkseq}
				\text{the sequence  $(\x_k)_\knn$ lies in $C$.
				}
			\end{equation}
			\cref{lem:C0:iv:aa}:
			Because $C$ is a cone,  \cref{e:zkseq} implies that 
			$(\x_k/k)_{k\ge 1}$ lies in $C$.
			It follows from 
			\cref{fact:T:v}\cref{fact:T:v:i}
			that 
			\begin{equation}
				\x_k/k\to -{\vx}\in C,
			\end{equation}
			where the inclusion follows from the closedness 
			of $C$.
			\cref{lem:C0:iv:b}:
			It follows from \cref{e:pdhg:CP:update:K0} applied with 
			$\x$ replaced by $\x_k$ 
			and the Moreau decomposition, see, e.g., \cite[Theorem~6.30]{BC2017},
			that
			\begin{equation}
				\x_k-\x_{k+1}-\sigma \lin^*\y_k-\c
				= \x_k-P_C(\x_k-\sigma \lin^*\y_k-\c)-\sigma \lin^*\y_k-\c
				=P_{C^\ominus}(\x_k-\sigma \lin^*\y_k-\c).
			\end{equation}
			Dividing the above equation by $k\ge 1$,
			using the positive homogeneity of $P_{C^\ominus}$
   (see \cite[Theorem~5.6(7)]{Deutsch})
			and taking the limit as
			$k\to \infty$ in view of 
			\cref{fact:T:v}\cref{fact:T:v:i}\&\cref{fact:T:v:ii} and the continuity of $P_{C^\ominus}$
			we learn that 
			\begin{equation}
				\sigma\lin^*{\vy}\leftarrow\tfrac{\x_k-\x_{k+1}}{k}+\sigma \lin^*\Big(\tfrac{-\y_k}{k}\Big)-\Big(\tfrac{\c}{k}\Big)
				=P_{C^\ominus}\Big(\tfrac{\x_k}{k}+\sigma\lin^*\big(\tfrac{-\y_k}{k}\big)-\tfrac{\c}{k}\Big)\to P_{C^\ominus}(-{\vx}+\sigma \lin^*{\vy}).
			\end{equation}
			That is, $\sigma \lin^*{\vy}=P_{C^\ominus}(-{\vx}+\sigma\lin^*{\vy})\in C^\ominus$. 
			\cref{lem:C0:iv:b2}: 
			It follows from \cref{lem:qp:v:K}\cref{lem:qp:K:v:v}
			that $\scal{{\vx}}{\lin^*{\vy}}=\scal{\lin {\vx}}{{\vy}}=0$.
			Now combine this with \cref{lem:C0:iv:aa} and \cref{lem:C0:iv:b}
			in view of, e.g., \cite[Proposition~6.28]{BC2017}.
\cref{lem:C0:iv:c}:
			It follows from
			\cref{lem:C0:iv:b}
			and \cite[Proposition~6.37(ii)]{BC2017}
			that
			${\vy}\in (\lin^*)^{-1}C^\ominus=(\lin (C))^\ominus$.
\cref{lem:C0:iv:a}:
			Indeed,  \cref{lem:C0:iv:aa}
			and \cref{lem:qp:v:K}\cref{lem:C0:ii} yield
			$\vx\in (-C)\cap \ker \lin=-(C\cap \ker \lin)=\{\bzero\}$.
			Hence, $\vx=\bzero$ as claimed.
			\cref{lem:C0:iv:d:a}:
			It follows from \cite[Fact~2.25(iv)~and~Proposition~6.35]{BC2017}
			that
			$\overline{ C^\ominus+\ran \lin^*}
			=\overline{C^\ominus+\cran \lin^*}
			=\overline{C^\ominus+(\ker \lin)^\perp}
			=\overline{C^\ominus+(\ker \lin)^\ominus}
			=(C\cap \ker \lin)^\ominus=\{\bzero\}^\ominus=X$.
   The proof is complete.
		\end{proof}

  		Before we proceed we recall the following useful 
		fact.
		\begin{fact}
			\label{fact:rock:cl}
			Suppose that $X$ is finite-dimensional 
			and that
			$\ker \lin\cap C=\{\bzero\}$.
			Then $\lin(C)$ is closed.
		\end{fact}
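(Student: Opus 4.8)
The plan is to recognize Fact~\ref{fact:rock:cl} as an instance of the classical criterion for closedness of the image of a closed convex set under a linear map (see, e.g., \cite[Theorem~9.1]{Rock70}): the image is closed whenever the recession cone of the set meets the kernel of the map only at the origin. Since $C$ is a closed convex cone, its recession cone is $C$ itself, so the hypothesis $\ker\lin\cap C=\{\bzero\}$ is exactly the recession condition, and the conclusion follows at once. For self-containedness I would instead give a short direct compactness argument, sketched next.

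First I would take an arbitrary sequence $(\x_\nnn)$ in $C$ with $\lin\x_n\to\y$ for some $\y\in Y$, and aim to produce $\x\in C$ with $\lin\x=\y$, so that $\y\in\lin(C)$. I would split into two cases according to whether $(\x_\nnn)$ admits a bounded subsequence. If it does, then since $X$ is finite-dimensional I pass to a convergent subsequence $\x_n\to\x$; closedness of $C$ gives $\x\in C$, and continuity of $\lin$ gives $\lin\x=\y$. Otherwise $\norm{\x_n}\to\infty$, and I normalise and pass to a further subsequence with $\x_n/\norm{\x_n}\to\d$ on the unit sphere of $X$ (compact since $\dim X<\infty$), so $\norm{\d}=1$. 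Because $C$ is a closed cone, $\d\in C$; and since $\lin\x_n\to\y$ is bounded while $\norm{\x_n}\to\infty$, we get $\lin\d=\lim_{n}\lin\x_n/\norm{\x_n}=\bzero$, hence $\d\in\ker\lin\cap C=\{\bzero\}$, contradicting $\norm{\d}=1$. Thus the unbounded case is impossible, and $\lin(C)$ is closed.

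The only real obstacle is bookkeeping: ensuring the case distinction is exhaustive and that finite-dimensionality is genuinely used (compactness of the unit sphere / Bolzano--Weierstrass), which is why the hypothesis that $X$ is finite-dimensional cannot be dropped. No constraint qualification or polyhedrality of $C$ is needed here --- only that $C$ is a closed convex cone with $\ker\lin\cap C$ trivial. If one prefers a reference-only proof, one may simply cite \cite[Theorem~9.1]{Rock70} after noting that the recession cone of the closed convex cone $C$ equals $C$.
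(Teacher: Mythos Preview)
Your proposal is correct and matches the paper's approach: the paper's proof is simply ``See \cite[Theorem~9.1]{Rock70},'' which is exactly the reference you invoke (after the apt observation that the recession cone of the closed convex cone $C$ is $C$ itself). Your additional self-contained compactness argument is also sound; the case split is exhaustive since the negation of ``$(\x_n)$ admits a bounded subsequence'' is precisely ``$\norm{\x_n}\to\infty$.''
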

		\begin{proof}
			See \cite[Theorem~9.1]{Rock70}.
		\end{proof}
		
		\begin{lemma}
			\label{lem:C0}
			Suppose that $X$  and $Y$ are finite-dimensional 
   and that $\ker \lin\cap C=\{\bzero\}$.
			Then for Problem~\cref{eq:c0:problem} we  have: 
			\begin{enumerate}
				\item 
				\label{lem:C0:iv:d}
				$\lin(C)$ is a nonempty closed convex cone.
				\item 
				\label{lem:C0:iv:de}
				${C^\ominus+\ran \lin^*}=X$.
				\item 
				\label{lem:C0:iv:e}
				$\cran(\Id-T)=M^{-1}\Big((X \times (\b-{{\lin(C)}})\Big) $.
				\item 
				\label{lem:C0:iv:f}
				Let $\z\in \lin(C)$. Then
				$\scal{{\vy}}{\tfrac{1}{\tau}{\vy}-(\b-\z)}\le 0$.
				\item 
				\label{lem:C0:iv:g}
				$\scal{{\vy}}{\tfrac{1}{\tau}{\vy}-\b}\le 0$. 
				\item 
				\label{lem:C0:iv:h}
				$\tfrac{1}{\tau}{\vy}=P_{\b-\lin(C)}(\bzero)=P_{(\lin C)^\ominus}(\b)$.
			\end{enumerate}
			Let $\overline{\u}\in C$ be such that $\tfrac{1}{\tau}{\vy}=\b- \lin \overline{\u}$.
			Then we have
			\begin{enumerate}[resume]
				\item 
				\label{lem:C0:iv:k}
				$\scal{\lin \overline{\u}}{{\vy}}=\scal{\overline{\u}}{\lin^*{\vy}}=0$.
				\item 
				\label{lem:C0:iv:l}
				$\tfrac{1}{\tau}\norm{{\vy}}^2=\scal{\b}{{\vy}}$.
				\item 
				\label{lem:C0:iv:m}
				$\overline{\u}=P_{C}(\overline{\u}+\lin^*{\vy})$
				and $\lin^*{\vy}=P_{C^\ominus}(\overline{\u}+\lin^*{\vy})$.
			\end{enumerate}
		\end{lemma}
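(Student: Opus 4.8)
The plan is to read off every item of \cref{lem:C0} from results already in hand, so the argument is essentially organizational; the two steps that require genuine attention are removing the closure bars in \cref{lem:C0:iv:de}--\cref{lem:C0:iv:e}, and matching the variational characterizations of the projections appearing in \cref{lem:C0:iv:h} and \cref{lem:C0:iv:m}.

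For \cref{lem:C0:iv:d} I would observe that $\lin(C)$ is a nonempty convex cone (it is the linear image of one) and invoke \cref{fact:rock:cl} for closedness, which applies precisely because $X$ is finite-dimensional and $\ker\lin\cap C=\{\bzero\}$. For \cref{lem:C0:iv:de}, the decisive observation is that in finite dimensions $\ran\lin^*=(\ker\lin)^\perp=(\ker\lin)^\ominus$, whence $C^\ominus+\ran\lin^*=C^\ominus+(\ker\lin)^\ominus$; applying the corollary established above---the sum of the polars of two closed convex cones with trivial intersection is the whole space---to the cones $C$ and $\ker\lin$ yields $X$. Item \cref{lem:C0:iv:e} is then immediate by substituting \cref{lem:C0:iv:d} and \cref{lem:C0:iv:de} into \cref{lem:C00}: there $\overline{C^\ominus+\ran\lin^*}+\c=X$ and $\overline{\lin(C)}=\lin(C)$, so $\cran(\Id-T)=M^{-1}(X\times(\b-\lin(C)))$.

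For the statements on $\vy$ I would first recall, from \cref{lem:qp:v:K}\cref{lem:C0:ii} with $K=\{\bzero\}$, that $\lin\vx=\bzero$. Then \cref{lem:C0:iv:f} follows from \cref{p:vobang}\cref{eq:vobang:ii}: here $\dom f=C$ and $\dom g=\{\b\}$, so for $\z\in\lin(C)$ the vector $\b-\z$ lies in $\b-\lin(C)=\overline{\dom g-\lin(\dom f)}$, which is closed by \cref{lem:C0:iv:d}, and the $\lin\vx$ term drops out. Taking $\z=\bzero\in\lin(C)$ gives \cref{lem:C0:iv:g}. For \cref{lem:C0:iv:h}, \cref{lem:C0:iv:e} shows $M\v\in X\times(\b-\lin(C))$, so its second coordinate $\tfrac{1}{\tau}\vy-\lin\vx=\tfrac{1}{\tau}\vy$ lies in $\b-\lin(C)$; rewriting \cref{lem:C0:iv:f} as $\scal{\bzero-\tfrac{1}{\tau}\vy}{\d-\tfrac{1}{\tau}\vy}\le0$ for every $\d\in\b-\lin(C)$, the Projection Theorem \cite[Theorem~3.16]{BC2017} identifies $\tfrac{1}{\tau}\vy=P_{\b-\lin(C)}(\bzero)$, and the Moreau decomposition \cite[Theorem~6.30]{BC2017} applied to the closed convex cone $\lin(C)$ gives $P_{\b-\lin(C)}(\bzero)=\b-P_{\lin(C)}(\b)=P_{(\lin C)^\ominus}(\b)$.

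Finally, with $\overline{\u}\in C$ such that $\tfrac{1}{\tau}\vy=\b-\lin\overline{\u}$, I would use \cref{lem:Cinf}\cref{lem:C0:iv:b} to get $\lin^*\vy\in C^\ominus$, hence $\scal{\overline{\u}}{\lin^*\vy}\le0$, while $\scal{\overline{\u}}{\lin^*\vy}=\scal{\lin\overline{\u}}{\vy}=\scal{\b}{\vy}-\tfrac{1}{\tau}\norm{\vy}^2\ge0$ by \cref{lem:C0:iv:g}; so both quantities vanish, which is \cref{lem:C0:iv:k}, and the resulting equality is \cref{lem:C0:iv:l}. Item \cref{lem:C0:iv:m} then follows by feeding $\overline{\u}+\lin^*\vy$, together with $\overline{\u}\in C$, $\lin^*\vy\in C^\ominus$ and $\scal{\overline{\u}}{\lin^*\vy}=0$, into the characterization of the projections onto a closed convex cone and its polar \cite[Proposition~6.28]{BC2017}. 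The step I expect to be the only real obstacle is \cref{lem:C0:iv:de}: one must resist quoting only the closure identity \cref{lem:Cinf}\cref{lem:C0:iv:dd}\cref{lem:C0:iv:d:a} and instead establish that $C^\ominus+(\ker\lin)^\ominus$ is already all of $X$, since that is precisely what licenses dropping the closure from \cref{lem:C00} in \cref{lem:C0:iv:e}; everything else is a routine combination of cited results.
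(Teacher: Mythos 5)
Your proof is correct and follows essentially the same route as the paper: Fact~\ref{fact:rock:cl} for closedness of $\lin(C)$, specialization of Lemma~\ref{lem:C00} and Proposition~\ref{p:vobang}, the Projection Theorem plus Moreau decomposition for \ref{lem:C0:iv:h}, and the zero-inner-product/cone-projection characterization for \ref{lem:C0:iv:k}--\ref{lem:C0:iv:m}. The only (cosmetic) difference is in \ref{lem:C0:iv:de}, where you invoke the corollary on $C^\ominus+(\ker\lin)^\ominus$ directly, while the paper combines Lemma~\ref{lem:Cinf}\ref{lem:C0:iv:d:a} with Lemma~\ref{lem:ncsets}\ref{lem:ncsets:ii}; these rest on the same facts and your correctly identified concern about dropping the closure is exactly the point both arguments address.
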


		\begin{proof}		
			\cref{lem:C0:iv:d}:
			Use \cref{fact:rock:cl}
			to learn that $\lin (C)$ is closed.
			The convexity is clear because $\lin$ is linear and 
			$C$ is convex. The conclusion
			$\lin (C)$ is a cone is straightforward.
			\cref{lem:C0:iv:de}:
			Combine
\cref{lem:Cinf}\cref{lem:C0:iv:d:a}
			and
\cref{lem:ncsets}\cref{lem:ncsets:ii} applied with
			with $C_1=X$ and $C_2=\overline{C^\ominus+\ran \lin^*}$.
			\cref{lem:C0:iv:e}:
			Combine 	\cref{lem:C00}, \cref{lem:C0:iv:de}
			and  \cref{lem:C0:iv:d}.
			\cref{lem:C0:iv:f}:
   Combine \cref{e:domsi0}
    and \cref{p:vobang}\cref{eq:vobang:ii}
    in view of \cref{lem:Cinf}\cref{lem:C0:iv:d:a}.
\cref{lem:C0:iv:g}:
			This is a direct consequence of 
			\cref{lem:C0:iv:f} by setting $\z=\bzero$.
			\cref{lem:C0:iv:h}:
			It follows from 
			\cref{lem:C0:iv:e}
			in view of \cref{e:def:v:Pm} that 
			$(-\lin^*{\vy},\tfrac{1}{\tau}{\vy})=M\v\in X\times ({\b-\lin(C)})$.
			That is, $\tfrac{1}{\tau}{\vy} \in {\b-\lin(C)}$.
			Now combine this with \cref{lem:C0:iv:f}
			and \cref{lem:C0:iv:d} 
			in view of \cite[Theorem~3.16]{BC2017} 
			to learn that $\tfrac{1}{\tau}{\vy}=P_{\b-\lin(C)}(\bzero)$.
			Finally, using, e.g., \cite[Proposition~3.19]{BC2017} we have 
			\begin{subequations}
				\begin{align}
					\tfrac{1}{\tau}{\vy}
					&=P_{\b-\lin(C)}(\bzero)=\b+P_{-\lin(C)}(-\b)=\b-P_{\lin(C)}(\b)
					\\
					&=(\Id-P_{\lin(C)})(\b)=P_{(\lin(C))^\ominus}(\b).
				\end{align}
			\end{subequations}            \cref{lem:C0:iv:k}\&\cref{lem:C0:iv:l}:
			Indeed, it follows from 
			\cref{lem:C0:iv:g}
			and \cref{lem:Cinf}\cref{lem:C0:iv:b}
			that 
			\begin{equation}
				0\le 
				\scal{\b-\tfrac{1}{\tau}{\vy}}{{\vy}}
				=\scal{\lin \overline{\u}}{{\vy}}
				=\scal{\overline{\u}}{\lin^* {\vy}}
				\le 0,
			\end{equation}
			hence $\scal{\b-\tfrac{1}{\tau}{\vy}}{{\vy}}
			=\scal{\lin \overline{\u}}{{\vy}}
			=\scal{\overline{\u}}{\lin^* {\vy}}=0$ and the conclusion follows.
			\cref{lem:C0:iv:m}:
			Combine 
			\cref{lem:C0:iv:k} and \cref{lem:Cinf}\cref{lem:C0:iv:b}
			in view of, e.g., \cite[Proposition~6.28]{BC2017}.
		\end{proof}
  
		We now show a sufficient condition to have $\v\in \ran(\Id-T)$.
		\begin{proposition}
			\label{p:ellpsepgenv}
			Suppose that $X$ and $Y$ are finite-dimensional, 
   that $\ker \lin\cap C=\{\bzero\}$
			and that $\c=\bzero$. Then 
			$\v\in \ran (\Id-T)$.
		\end{proposition}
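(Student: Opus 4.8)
The plan is to reduce the assertion to the single inclusion $M\v\in\ran(\partial F+S)$; once this is in hand, \cref{prop:T:prop}\cref{prop:T:prop:iv} gives at once $\v=M^{-1}(M\v)\in M^{-1}(\ran(\partial F+S))=\ran(\Id-T)$. The remaining work consists of evaluating $M\v$ explicitly and exhibiting a point at which $\partial F+S$ attains it, all of which rests on the lemmas already proved in this subsection.

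First I would invoke $\ker\lin\cap C=\{\bzero\}$ and \cref{lem:Cinf}\cref{lem:C0:iv:a} to get $\vx=\bzero$, so that
\begin{equation*}
M\v=\Bigl(\tfrac{1}{\sigma}\vx-\lin^*\vy,\ \tfrac{1}{\tau}\vy-\lin\vx\Bigr)=\Bigl(-\lin^*\vy,\ \tfrac{1}{\tau}\vy\Bigr).
\end{equation*}
Since $X$ and $Y$ are finite-dimensional, \cref{lem:C0}\cref{lem:C0:iv:h} shows $\tfrac{1}{\tau}\vy\in\b-\lin(C)$, so I may fix $\overline{\u}\in C$ with $\tfrac{1}{\tau}\vy=\b-\lin\overline{\u}$.

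The core step uses $\c=\bzero$. With $\c=\bzero$, \cref{e:def:fcC} reduces $f$ to $\iota_C$, hence $\partial f=N_C$; and with $K=\{\bzero\}$, \cref{eq:ff^*:cone} gives $g^*=\scal{\b}{\cdot}$, hence $\partial g^*\equiv\{\b\}$. Consequently, at the point $(\overline{\u},-\vy)\in C\times Y=\dom(\partial F+S)$ we have
\begin{equation*}
(\partial F+S)(\overline{\u},-\vy)=\bigl(N_C(\overline{\u})-\lin^*\vy\bigr)\times\{\b-\lin\overline{\u}\}.
\end{equation*}
Because $\bzero\in N_C(\overline{\u})$ and $\b-\lin\overline{\u}=\tfrac{1}{\tau}\vy$, the pair $\bigl(-\lin^*\vy,\tfrac{1}{\tau}\vy\bigr)=M\v$ belongs to the set on the right, so $M\v\in\ran(\partial F+S)$ and the proof concludes as described.

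The proof is short precisely because \cref{lem:Cinf} and \cref{lem:C0} already do the analysis; the one point requiring care is the role of the hypothesis $\c=\bzero$. It is exactly this hypothesis that forces $\bzero\in\partial f(\overline{\u})$, so that the first coordinate $-\lin^*\vy$ of $M\v$ — which lies in $\ran\lin^*$ only because $\vx=\bzero$ — is automatically captured; for general $\c$ one would instead need $-\c\in N_C(\overline{\u})+\ran\lin^*$ for this particular $\overline{\u}$, which is not guaranteed, and that is the main obstacle to dropping the assumption. I would finally verify the two routine points used implicitly, namely that $\dom(\partial F+S)=C\times Y$ and that the coordinate computation of $M\v$ above is correct once $\vx=\bzero$.
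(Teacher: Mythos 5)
Your proof is correct and follows essentially the same route as the paper: reduce to $M\v\in\ran(\partial F+S)$ via \cref{prop:T:prop}\cref{prop:T:prop:iv}, use $\vx=\bzero$ and \cref{lem:C0}\cref{lem:C0:iv:h} to write $M\v=(-\lin^*\vy,\tfrac{1}{\tau}\vy)$ with $\tfrac{1}{\tau}\vy=\b-\lin\overline{\u}$, and then evaluate $\partial F+S$ at $(\overline{\u},-\vy)$ using $\bzero\in N_C(\overline{\u})$. Your explicit tracking of where $\vx=\bzero$ and $\c=\bzero$ enter is, if anything, slightly more careful than the paper's write-up.
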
	
		\begin{proof}
			In view of \cref{prop:T:prop}\cref{prop:T:prop:iv}
			and  \cref{lem:Cinf}\cref{lem:C0:iv:d:a}  we have 
			\begin{equation}
				\label{e:attdiff}
				\v\in \ran(\Id-T)\siff M\v
				=\begin{pmatrix}
					-\lin^*{\vy}
					\\
					\tfrac{1}{\tau}{\vy}
				\end{pmatrix}
				\in \ran (\partial F+S),
			\end{equation}
			where 
			$\partial F=N_{C}\times \{\b\}$.
			It follows from \cref{lem:C0}\cref{lem:C0:iv:h}
			that
			$(\exists \overline{\u}\in {\bf C})$
			such that $\tfrac{1}{\tau}{\vy}=\b-\lin\overline{\u}$.
			Now consider the point 
			$\begin{pmatrix}
				\overline{\u}
				\\
				{-\vy}
			\end{pmatrix}
			\in C \times Y=\dom \partial F=\dom (\partial F+S)$.
			We have 
			\begin{equation}
				(\partial F+S)
				\begin{pmatrix}
					\overline{\u}
					\\
					{-\vy}
				\end{pmatrix}
				= \begin{pmatrix}
					N_{C}(\overline{\u})-\lin^*{\vy}
					\\
					\b-\lin\overline{\u}
				\end{pmatrix}
				\ni 
				\begin{pmatrix}
					\bzero-\lin^*{\vy}
					\\
					\b- \lin\overline{\u}
				\end{pmatrix}
				=
				\begin{pmatrix}
					-\lin^*{\vy}
					\\
					\tfrac{1}{\tau}{\vy}
				\end{pmatrix}
				=M\v.
			\end{equation} 
			This completes the proof in view of \cref{e:attdiff}.
		\end{proof}	
		
		\begin{proposition}
			\label{p:ellip:dyn}
			Suppose that $X$ and $Y$ are finite-dimensional, 
   that $\ker \lin\cap C=\{\bzero\}$
			and that $\c=\bzero$. 
			Let $(\x_0,\y_0)\in X\times Y$.
			Update via $(\forall \knn)$
			\begin{equation}
				(\x_{k+1},\y_{k+1})=T(\x_{k},\y_{k}).
			\end{equation}		
			Then the following hold.
			\begin{enumerate}
				\item
				\label{p:ellip:dyn:i}
				The sequence $(\x_k, \y_k+k{\vy})_\knn$ is bounded.
			\end{enumerate}
			Let $\overline{\x}$ be a cluster point of $(\x_k)_\knn$.
			Then we have
			\begin{enumerate}[resume]
				\item 
				\label{p:ellip:dyn:ii}
				$\overline{\x}\in C$.
				\item
				\label{p:ellip:dyn:iii}
				$\tau \lin\overline{\x}=\tau \b-{\vy}$.
				\item 
				\label{p:ellip:dyn:iv}
				$(\overline{\x},\bzero)\in \fix (\v+T)$.
			\end{enumerate}
		\end{proposition}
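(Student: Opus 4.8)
The plan is to bootstrap all four assertions from material already established in this section, the crucial inputs being \cref{p:ellpsepgenv} (so that $\v\in\ran(\Id-T)$, equivalently $\fix(\v+T)\neq\fady$), \cref{lem:Cinf}\cref{lem:C0:iv:dd}\cref{lem:C0:iv:a} (which gives $\vx=\bzero$ under the hypothesis $\ker\lin\cap C=\{\bzero\}$), \cref{fact:fejer}, and \cref{fact:T:v}.

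For \cref{p:ellip:dyn:i} I would invoke \cref{fact:fejer} with $\z_0=(\x_0,\y_0)$: since $\v\in\ran(\Id-T)$, the sequence $(\z_k+k\v)_\knn=\big((\x_k+k\vx,\y_k+k\vy)\big)_\knn$ is bounded, so in finite dimensions each coordinate block is bounded; because $\vx=\bzero$ the first block equals $(\x_k)_\knn$, whence $(\x_k,\y_k+k\vy)_\knn$ is bounded (and, by Bolzano--Weierstrass, $(\x_k)_\knn$ has cluster points). For \cref{p:ellip:dyn:ii}, the first line of \cref{e:pdhg:CP:update:K0} (recall $\c=\bzero$) gives $\x_k=P_C(\x_{k-1}-\sigma\lin^*\y_{k-1})\in C$ for every $k\ge1$, so a tail of $(\x_k)_\knn$ lies in the closed set $C$ and hence so does every cluster point $\overline\x$.

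For \cref{p:ellip:dyn:iii} I would rearrange the second line of \cref{e:pdhg:CP:update:K0} at step $k$ into
\[
\y_k-\y_{k+1}=\tau\b-\tau\lin\x_{k+1}-\tau\lin(\x_{k+1}-\x_k),
\]
and use \cref{fact:T:v}\cref{fact:T:v:ii}: $\y_k-\y_{k+1}\to\vy$ and $\x_k-\x_{k+1}\to\vx=\bzero$, so $\lin(\x_{k+1}-\x_k)\to\bzero$ by continuity of $\lin$. Solving for $\tau\lin\x_{k+1}$ shows the \emph{entire} sequence $(\tau\lin\x_k)_\knn$ converges to $\tau\b-\vy$; choosing a subsequence $\x_{k_j}\to\overline\x$ and applying continuity of $\lin$ once more gives $\tau\lin\overline\x=\tau\b-\vy$. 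Finally, for \cref{p:ellip:dyn:iv}, I would evaluate $T$ at $(\overline\x,\bzero)$ via \cref{e:pdhg:CP:update:K0} with $\c=\bzero$: the first component is $P_C(\overline\x)=\overline\x$ by \cref{p:ellip:dyn:ii}, and the second is $\bzero+\tau\lin(2\overline\x-\overline\x)-\tau\b=\tau\lin\overline\x-\tau\b=-\vy$ by \cref{p:ellip:dyn:iii}; hence $T(\overline\x,\bzero)=(\overline\x,-\vy)$ and $(\v+T)(\overline\x,\bzero)=\v+(\overline\x,-\vy)=(\vx+\overline\x,\vy-\vy)=(\overline\x,\bzero)$, so $(\overline\x,\bzero)\in\fix(\v+T)$.

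No step here is a serious obstacle; the argument is largely a matter of assembling the earlier results in the right order. The one place that needs a little care is \cref{p:ellip:dyn:iii}: the sequence $(\x_k)_\knn$ need not converge, only be bounded, but its image $(\lin\x_k)_\knn$ does converge, and that is precisely what pins down $\lin\overline\x$ for every cluster point $\overline\x$ and makes the fixed-point computation in \cref{p:ellip:dyn:iv} go through.
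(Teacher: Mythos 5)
Your proposal is correct and follows essentially the same route as the paper: Fej\'er monotonicity via \cref{fact:fejer} together with $\vx=\bzero$ for \cref{p:ellip:dyn:i}, closedness of $C$ for \cref{p:ellip:dyn:ii}, passing to the limit in the $\y$-update using \cref{fact:T:v} for \cref{p:ellip:dyn:iii}, and the same direct evaluation of $(\v+T)(\overline\x,\bzero)$ for \cref{p:ellip:dyn:iv}. The only (harmless) differences are that you explicitly invoke \cref{p:ellpsepgenv} to justify the hypothesis of \cref{fact:fejer}, which the paper leaves implicit, and that you observe the full sequence $(\lin\x_k)_\knn$ converges where the paper argues along a subsequence.
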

		\begin{proof}
			\cref{p:ellip:dyn:i}:
			Combine \cref{prop:T:prop}\cref{prop:T:prop:ii},
			and \cref{fact:fejer}
			in view of 
			\cref{lem:Cinf}\cref{lem:C0:iv:a} .
			\cref{p:ellip:dyn:ii}:
			This follows from the fact that $(\x_k)_\knn $
			lies in $C$ and $C$ is closed.
\cref{p:ellip:dyn:iii}:
			Suppose that $\x_{n_k}\to \overline{\x}$.
			It follows from 
			\cref{fact:T:v}\cref{fact:T:v:i}
			in view of 
			\cref{prop:T:prop}\cref{prop:T:prop:ii}
			that 
			$\x_{n_k}-\x_{n_k+1}\to \bzero$, hence 
			$\x_{n_k+1}\to \overline{\x}$.
			Therefore, using this and \cref{e:pdhg:CP:update:K0} applied with $\c=\bzero$
			we have 
			${\vy}\leftarrow \y_{n_k}-\y_{n_k+1}
   = \tau \b-\tau \lin(2\x_{n_k+1}-\x_{n_k})=\tau \b-\tau \lin\overline{\x}$.
			\cref{p:ellip:dyn:iv}:
			Indeed, using 
			\cref{p:ellip:dyn:ii}
			and 
			\cref{p:ellip:dyn:iii}
			we have
			\begin{subequations}
				\begin{align}
					\v+T(\overline{\x},\bzero)
					&=(\bzero,{\vy})+(P_C(\overline{\x}-\sigma \lin^*\bzero), \bzero+\tau \lin(2P_C(\overline{\x}-\sigma\lin^*\bzero)-\overline{\x})-\tau \b)
					\\
					&=(\overline{\x},{\vy}+ \tau \lin\overline{\x}-\tau \b)=(\overline{\x},\bzero), 
				\end{align}
			\end{subequations}
			and the conclusion follows.
		\end{proof}
		
		We recall the following fact.
		\begin{fact}
			\label{fact:fmono:ortho}
			Let $(\z_k)_\knn$ be Fej\'er monotone with respect to 
			a nonempty closed convex subset $D$ of $X$.
			Let $\w_1$ and $\w_2$ be two cluster points of $(\z_k)_\knn$.
			Then $\w_1-\w_2\in (D-D)^\perp$.
		\end{fact}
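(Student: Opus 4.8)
The plan is to convert Fej\'er monotonicity into convergence of suitable inner products evaluated along $(\z_k)_\knn$, and then to read off the orthogonality relation from the two cluster points.

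First I would record the elementary consequence of the definition of Fej\'er monotonicity: for each fixed $\c\in D$ the real sequence $(\norm{\z_k-\c})_\knn$ is nonincreasing and bounded below by $0$, hence convergent; denote its limit by $\ell(\c)$. Next, given any two points $\c_1,\c_2\in D$, I would expand the squared norms and subtract, obtaining
\[
\norm{\z_k-\c_1}^2-\norm{\z_k-\c_2}^2 = 2\scal{\z_k}{\c_2-\c_1}+\norm{\c_1}^2-\norm{\c_2}^2 .
\]
Since the left-hand side converges to $\ell(\c_1)^2-\ell(\c_2)^2$, the real sequence $(\scal{\z_k}{\c_2-\c_1})_\knn$ converges; call its limit $L$ (which depends on $\c_1,\c_2$).

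Now I would bring in the cluster points. If $\z_{k_j}\to\w_1$ along a subsequence, then $\scal{\w_1}{\c_2-\c_1}=\lim_j\scal{\z_{k_j}}{\c_2-\c_1}=L$, because the whole sequence of inner products already converges to $L$. The same reasoning applied to a subsequence converging to $\w_2$ gives $\scal{\w_2}{\c_2-\c_1}=L$. Subtracting the two identities yields $\scal{\w_1-\w_2}{\c_1-\c_2}=0$. As $\c_1,\c_2\in D$ were arbitrary, $\w_1-\w_2$ is orthogonal to every vector of $D-D$, i.e.\ $\w_1-\w_2\in(D-D)^\perp$, which is the claim. I do not expect any genuine difficulty here; the only point needing a word of care is that in an infinite-dimensional Hilbert space ``cluster point'' should be read in the weak topology, in which case $\scal{\w_i}{\c_2-\c_1}=\lim_j\scal{\z_{k_j}}{\c_2-\c_1}$ still holds by the definition of weak convergence, so the argument is unaffected; in the finite-dimensional settings where this fact is invoked the issue does not arise.
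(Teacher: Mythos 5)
Your argument is correct and is essentially the standard proof of this fact; the paper itself only cites the literature (Bauschke--Douglas--Moursi and Bauschke's thesis), and the proofs there proceed exactly as you do, by expanding $\norm{\z_k-\c_1}^2-\norm{\z_k-\c_2}^2$, using convergence of the two norm sequences to deduce convergence of $(\scal{\z_k}{\c_2-\c_1})_\knn$, and evaluating along the two convergent subsequences. Your remark about weak versus strong cluster points is also the right caveat, and it is harmless in the finite-dimensional setting where the paper invokes the fact.
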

		\begin{proof}
			See, e.g., \cite[Lemma~2.2]{BDM15} or \cite[Theorem~6.2.2(ii)]{B96}.
		\end{proof}
   
In the special case considered in this section, namely $\ker \lin\cap C=\{\bzero\}$, $\c=\bzero$, for which we already know $\v_R=\bzero$ by \cref{lem:Cinf}\cref{lem:C0:iv:a}, we can show that the first component of the sequence $(\x_k,\y_k)_\knn$ converges, and furthermore, we can partly characterize the limit point as follows.
  
		\begin{theorem}\label{thm:conves}
			Suppose that $X$ and $Y$ are finite-dimensional, that $\ker \lin\cap C=\{\bzero\}$
			and that $\c=\bzero$. 
			Let $(\x_0,\y_0)\in X\times Y$.
			Update via $(\forall \knn)$
			\begin{equation}
				(\x_{k+1},\y_{k+1})=T(\x_{k},\y_{k}).
			\end{equation}		
			Then there exists $\overline{\x}\in C$ such that the following hold.
			\begin{enumerate}
				\item
				\label{t:ellip:i}
				The sequence $(\x_k)_\knn$ converges to $\overline{\x}$.
				\item
				\label{t:ellip:ii}
				$\overline{\x}\in C\cap L$, where $L=\menge{\x\in X}{\lin \x=\b-\tfrac{1}{\tau}{\vy}}$.
			\end{enumerate}
		\end{theorem}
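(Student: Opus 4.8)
The plan is to derive convergence of $(\x_k)_\knn$ from the Fej\'er monotonicity of $(\z_k+k\v)_\knn$, combined with the fact (\cref{fact:fmono:ortho}) that any two cluster points of a Fej\'er monotone sequence differ by an element of the orthogonal complement of $D-D$, all carried out in the Hilbert space $Z$ with its $M$-weighted inner product $\scal{\cdot}{\cdot}\normM$. First I would record the structural facts already available under the standing hypotheses, which are exactly those of \cref{p:ellpsepgenv}: thus $\v\in\ran(\Id-T)$, equivalently $\fix(\v+T)\neq\fady$, and this set is closed and convex since $\v+T$ is firmly nonexpansive on $Z$ by \cref{prop:T:prop}\cref{prop:T:prop:ii}. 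By \cref{fact:fejer} the sequence $\z_k+k\v$ is Fej\'er monotone with respect to $\fix(\v+T)$, hence bounded; since $\vx=\bzero$ by \cref{lem:Cinf}\cref{lem:C0:iv:a}, we have $\v=(\bzero,\vy)$ and so $\z_k+k\v=(\x_k,\y_k+k\vy)_\knn$. Finite-dimensionality then gives cluster points, and the $X$-component of any cluster point of $(\z_k+k\v)_\knn$ is a cluster point of $(\x_k)_\knn$.

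The heart of the argument is to show that all cluster points of $(\z_k+k\v)_\knn$ share the same $X$-component. Let $\w_1=(\overline{\x}_1,\overline{\w}_1)$ and $\w_2=(\overline{\x}_2,\overline{\w}_2)$ be two such cluster points. Then $\overline{\x}_1,\overline{\x}_2$ are cluster points of $(\x_k)_\knn$, so \cref{p:ellip:dyn}\cref{p:ellip:dyn:ii}--\cref{p:ellip:dyn:iv} gives, for $i\in\{1,2\}$, that $\overline{\x}_i\in C$, that $\tau\lin\overline{\x}_i=\tau\b-\vy$, and — crucially — that $(\overline{\x}_i,\bzero)\in\fix(\v+T)$. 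Applying \cref{fact:fmono:ortho} in $Z$ yields $\w_1-\w_2\in(\fix(\v+T)-\fix(\v+T))^\perp$ (orthogonal complement in $Z$); testing against the admissible difference $(\overline{\x}_1,\bzero)-(\overline{\x}_2,\bzero)$ and expanding $\scal{\cdot}{\cdot}\normM=\scal{\cdot}{M\,\cdot}$ with $M$ as in \cref{eq:def:M} gives
\[
0=\scal{(\overline{\x}_1-\overline{\x}_2,\overline{\w}_1-\overline{\w}_2)}{M(\overline{\x}_1-\overline{\x}_2,\bzero)}=\tfrac{1}{\sigma}\norm{\overline{\x}_1-\overline{\x}_2}^2-\scal{\overline{\w}_1-\overline{\w}_2}{\lin(\overline{\x}_1-\overline{\x}_2)}.
\]
Since $\lin\overline{\x}_1=\lin\overline{\x}_2=\b-\tfrac1\tau\vy$, the last term vanishes, forcing $\overline{\x}_1=\overline{\x}_2$.

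Consequently $(\x_k)_\knn$ is bounded with a unique cluster point, hence convergent to some $\overline{\x}$; \cref{p:ellip:dyn}\cref{p:ellip:dyn:ii}\&\cref{p:ellip:dyn:iii} then identify $\overline{\x}\in C$ and $\lin\overline{\x}=\b-\tfrac1\tau\vy$, i.e., $\overline{\x}\in C\cap L$, which proves \cref{t:ellip:i} and \cref{t:ellip:ii}. The main obstacle I anticipate is the bookkeeping around \cref{fact:fmono:ortho}: it must be invoked in the $M$-geometry of $Z$ rather than in the standard inner product of $X\times Y$, and one must notice that \cref{p:ellip:dyn}\cref{p:ellip:dyn:iv} places only the reduced point $(\overline{\x},\bzero)$ — not the full cluster point — in $\fix(\v+T)$, yet this is precisely the test direction whose $M$-pairing kills the cross term. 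A more direct route, namely showing every cluster point of $(\z_k+k\v)_\knn$ lies in $\fix(\v+T)$ and invoking convergence of Fej\'er monotone sequences, appears to fail because $\lin^*\y_k$ can diverge when $\vy\neq\bzero$, so one cannot pass to the limit inside $P_C$.
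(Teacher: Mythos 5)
Your proposal is correct and follows essentially the same route as the paper's proof: Fej\'er monotonicity of $(\z_k+k\v)_\knn$ with respect to $\fix(\v+T)$, the placement of the reduced points $(\overline{\x},\bzero)$ in $\fix(\v+T)$ via \cref{p:ellip:dyn}\cref{p:ellip:dyn:iv}, and \cref{fact:fmono:ortho} applied in the $M$-inner product, with the cross term killed by $\lin\overline{\x}_1=\lin\overline{\x}_2=\b-\tfrac{1}{\tau}\vy$. The subtleties you flag (working in the $Z$-geometry, and that only $(\overline{\x},\bzero)$ rather than the full cluster point is known to lie in $\fix(\v+T)$) are exactly the ones the paper's argument navigates.
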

		\begin{proof}
			\cref{t:ellip:i}:
			In view of \cref{p:ellip:dyn}\cref{p:ellip:dyn:i} it suffices to show that 
			$(\x_k)_\knn$ has at most one cluster point.
			To this end suppose that $\overline{\x}$
			and $\hat{\x}$ are two cluster points of 
			$(\x_k)_\knn$, say $\x_{n_k}\to \overline{\x}$
			and $\x_{l_k}\to \hat{\x}$.
			After dropping to a subsubsequence
			and relabelling if needed we can and do assume that 
			$\z_{n_k}+n_k \v\to (\overline{\x},\overline{\y})$
			and 
			$\z_{l_k}+l_k \v\to (\hat{\x},\hat{\y})$.
			On the one hand, it follows from 
			\cref{p:ellip:dyn}\cref{p:ellip:dyn:iv}
			that $(\overline{\x},\bzero)$
			and $(\hat{\x},\bzero)$ lie in $\fix (\v+T)$.
			On the other hand, applying \cref{fact:fmono:ortho}
			with $D$ replaced by $\fix (\v+T)$, $\w_1$
			replaced by $(\overline{\x},\overline{\y})$,
			and $\w_2$ replaced by
			$(\hat{\x},\hat{\y})$ in view of \cref{p:ellip:dyn}\cref{p:ellip:dyn:iii} 
			applied to $\overline{\x}$ and $\hat{\x}$
			yields
			\begin{subequations}
				\begin{align}
					0&=\scal{(\overline{\x},\overline{\y})-(\hat{\x},\hat{\y})}{(\overline{\x},\bzero)-(\hat{\x},\bzero)}_M
					\\
					&=\scal{(\overline{\x}-\hat{\x},\overline{\y}-\hat{\y})}{(\overline{\x}-\hat{\x},\bzero)}_M
					\\
					&=\scal{(\overline{\x}-\hat{\x},\overline{\y}-\hat{\y})}{(\tfrac{1}{\sigma}(\overline{\x}-\hat{\x})-\lin^*(\bzero),
						\bzero-(\lin\overline{\x}-\lin\hat{\x}))}
					\\
					&= \tfrac{1}{\sigma}\scal{(\overline{\x}-\hat{\x},\overline{\y}-\hat{\y})}{(\overline{\x}-\hat{\x},
						\bzero)}=\tfrac{1}{\sigma} \norm{\overline{\x}-\hat{\x}}^2.
				\end{align}
			\end{subequations}
			That is $\overline{\x}=\hat{\x}$
			and the conclusion follows. 
			\cref{t:ellip:ii}:
			Combine \cref{t:ellip:i} and 
			\cref{p:ellip:dyn}\cref{p:ellip:dyn:ii}\&\cref{p:ellip:dyn:iii}.
		\end{proof}

		\subsection{Application to the ellipsoid separation problem}
\label{sec:ellsep}
An example of Problem~\cref{eq:c0:problem} in which $\c=\bzero$ and $\ker \lin \cap C=\{\bz\}$ is the ellipsoid separation problem, which we describe in this section.  This problem asks: given two collections of finitely many ellipsoids, say $E_1,\ldots,E_k$ and $E_1',\ldots,E_l'$ all lying in $\R^d$, is there a hyperplane that strictly separates $E_1,\ldots,E_k$ from $E_1',\ldots,E_l'$?  This problem is a robust extension of the classic binary classification problem. ``Robust'' in this context means that the locations of the data points are known only up to an ellipsoid, and that the separating hyperplane should be correct for all possible actual locations of the points.  See, e.g., Shivaswamy et al.
\cite{Shiva}.  We start with a characterization of separators whose proof (omitted) follows directly from the standard hyperplane separation theorem.

\begin{fact}\label{lem:convsep}
Suppose that  $X$  is finite-dimensional.
Let $E_1,\ldots, E_k$ and $E_1',\ldots,E_l'$ be $k+l$ nonempty convex compact bodies lying in $X$.  Then there exists $\bm{a}\in X\smallsetminus\{\bzero\}$, $b\in \R$ such that $\langle\bm{a},\x\rangle<b$ for all $\x\in E_1\cup\cdots\cup E_k$ and $\langle\bm{a},\x\rangle>b$ for all $\x\in E_1'\cup\cdots\cup E_k'$ if and only if 
$\conv(E_1\cup\cdots\cup E_k)\cap \conv(E_1'\cup\cdots\cup E_l')=\emptyset$.
\end{fact}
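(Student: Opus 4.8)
The plan is to reduce the statement to the classical strict separation theorem for two disjoint nonempty compact convex sets. First I would set
$A:=\conv(E_1\cup\cdots\cup E_k)$ and $B:=\conv(E_1'\cup\cdots\cup E_l')$. Since each $E_i$ and each $E_j'$ is compact and $X$ is finite-dimensional, the union $E_1\cup\cdots\cup E_k$ is compact, and the convex hull of a compact subset of a finite-dimensional space is again compact (e.g.\ via Carath\'eodory's theorem, see \cite[Theorem~17.2]{Rock70}); hence $A$ is nonempty, convex, and compact, and likewise $B$.

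For the implication ``$\Leftarrow$'': assume $A\cap B=\emptyset$. Then $A$ and $B$ are nonempty, disjoint, compact, convex subsets of the finite-dimensional space $X$, so the strict separation theorem (see, e.g., \cite[Corollary~11.4.1]{Rock70} or \cite{BC2017}) provides $\bm a\in X\smallsetminus\{\bzero\}$ and $b\in\R$ with $\langle\bm a,\x\rangle<b<\langle\bm a,\y\rangle$ for all $\x\in A$ and all $\y\in B$. Since $E_i\subseteq A$ for every $i$ and $E_j'\subseteq B$ for every $j$, the pair $(\bm a,b)$ has exactly the required separating property.

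For the implication ``$\Rightarrow$'': assume $\bm a\neq\bzero$ and $b\in\R$ satisfy $\langle\bm a,\x\rangle<b$ for all $\x\in E_1\cup\cdots\cup E_k$ and $\langle\bm a,\y\rangle>b$ for all $\y\in E_1'\cup\cdots\cup E_l'$. The open halfspace $H^-:=\{\x\in X\mid\langle\bm a,\x\rangle<b\}$ is convex and contains $E_1\cup\cdots\cup E_k$, hence contains its convex hull $A$; analogously the open halfspace $H^+:=\{\x\in X\mid\langle\bm a,\x\rangle>b\}$ contains $B$. Since $H^-\cap H^+=\emptyset$, we conclude $A\cap B=\emptyset$.

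There is no genuine obstacle here; the only two points that deserve a word of care are (i) that the convex hull of a finite union of compact convex sets is compact, which is what lets us invoke the \emph{strict} (rather than merely proper) separation theorem and obtain strict inequalities, and (ii) that membership in an open halfspace is a convex condition, so it transfers automatically from a set to its convex hull. Both are standard facts in finite-dimensional convex analysis, which is why the proof is omitted in the main text.
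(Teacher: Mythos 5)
Your proof is correct and is precisely the argument the paper has in mind: the paper omits the proof, noting only that it ``follows directly from the standard hyperplane separation theorem,'' and your write-up supplies exactly those details (compactness of the convex hulls via Carath\'eodory, strict separation of disjoint compact convex sets, and convexity of open halfspaces for the converse). No gaps.
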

Let us introduce further notation for the ellipsoids: say that 
\begin{subequations}
\begin{align}
E_i&:=\{\x\in\R^d:\Vert A_i^{-1}(\x-\c_i)\Vert\le 1\}, \quad i\in\{1,\ldots,k\},\\
E_i'&:=\{\x\in\R^d:\Vert B_i^{-1}(\x-\d_i)\Vert\le 1\}, \quad i\in\{1,\ldots, l\}.
\end{align}
\label{eq:Eidef}
\end{subequations}
Here, $A_1,\ldots,A_k,B_1,\ldots,B_l$ are $d\times d$ {invertible matrices}
and $\c_1,\ldots,\c_k,\d_1,\ldots,d_l$ are vectors (centers of the ellipsoids).  
The naive way of writing the problem ``Is $\conv(E_1\cup\cdots\cup E_k)\cap \conv(E_1'\cup\cdots\cup E_l')$ nonempty?" would introduce variables $\v_1,\ldots,\v_k,\w_1,\ldots,\w_l\in\R^d$ constrained to lie in the respective ellipsoids, and nonnegative multipliers $\lambda_1,\ldots,\lambda_k, \mu_1,\ldots,\mu_l$ satisfying $\lambda_1+\cdots+\lambda_k=\mu_1+\cdots+\mu_l=1$ and $\lambda_1\v_1+\cdots+\lambda_k\v_k=\mu_1\w_1+\cdots+\mu_l\w_l$.  However, this formulation is not convex due to the products $\lambda_i\v_i$, $\mu_i\w_i$.  

A standard rescaling trick (see., e.g., Boyd \& Vandenberghe \cite[Exercise~4.56]{BV}  attributed to Parrilo) reformulates the problem of nonemptiness of the intersection of convex hulls as standard SOCP with variables $\lambda_1,\ldots,\lambda_k,\mu_1,\ldots,\mu_l,\p_1,\ldots,\p_k, \q_1,\ldots,\q_l$:
\[
\begin{array}{rl}
\minimize{} & 0 \\
\mbox{\rm subject to}
&
\lambda_1+\cdots+\lambda_k  = 1, \\
&
\mu_1+\cdots+\mu_l = 1, \\
&
\lambda_1\c_1+A_1\p_1+\cdots+\lambda_k\c_k+A_k\p_k-\mu_1\d_1+B_1\q_1-\cdots-\mu_l\d_l+B_l\q_l = \bz, \\
&
\Vert \p_i\Vert\le \lambda_i\quad\forall i\in\{1,\ldots,k\},\\
&
\Vert \q_i\Vert \le \mu_i\quad \forall
i\in\{1,\ldots, l\}. 
\end{array}
\]
Note that the constraints $\bm{\lambda}\ge \bzero$ and $\bm{\mu}\ge\bzero$ are redundant in this formulation and hence are dropped. The objective ``$\min 0$'' indicates that any feasible solution to the constraints yields a common point in the convex hulls.
  Let $X:=\RR^{d+1}\times\cdots \times \RR^{d+1}$.
We further rewrite this problem in the  form:

		\be
			\mbox{Find} \;\;	\x:=\left(\begin{array}{c}
				\lambda_1 \\
				\p_1 \\
				\vdots \\
				\lambda_k \\
				\p_k \\
				\mu_1 \\
				\q_1 \\
				\vdots\\
				\mu_l \\
				\q_l
			\end{array}
			\right)  \in X
   \qquad  
			\mbox{\rm subject to} \;\;
			\begin{array}{rl}
			     & \mathcal{A}\x
                = \left(\begin{array}{c}
				1 \\
				1 \\
				\bz
			\end{array}
			\right)=:\b,  \\
			     & \x \in C_2^{d+1}\times \cdots\times C_2^{d+1}=:\bf{C},
			\end{array}
  \label{eq:ellsep_primal}
		\ee

where $C_2^{d+1}$ refers to the second-order cone in $\R^{d+1}$, and where
		\be
		\mathcal{A}=
		\left(
		\begin{array}{cccccccccc}
			1 & \bz^T&\cdots& 1 &\bz^T &0 &\bz^T & \cdots & 0 & \bz^T\\
			0 &\bz^T & \cdots & 0 & \bz^T & 1 & \bz^T&\cdots& 1 &\bz^T  \\
			\c_1 & A_1 & \cdots &\c_k & A_k &
			-\d_1 &B_1 & \cdots & -\d_l & B_l
		\end{array}
		\right).
  \label{eq:ellsepAdef}
		\ee
	This is a convex feasibility problem which can be recast as: 
\begin{equation}
			\label{prob:ellip}
			\min_{\x\in X}\;\;
			\iota_{\bf C}(\x)+\iota_{\{\bzero\}}(\lin \x-\b).
		\end{equation}
		By setting
		$K=\{\bzero\}$ and $g=\iota_{\bf C}$ in 
		\cref{eq:c:problem}
		and recalling 
		\cref{e:pdhg:CP:update:K0}
		we learn that the PDHG update for the problem becomes 
		\begin{equation}
			\label{e:pdhg:ellipsUp}
			\begin{pmatrix}
				\x^+\\
				\y^+
			\end{pmatrix}		
			:=	
   T	\begin{pmatrix}
				\x\\
				\y
			\end{pmatrix}	
:=
			\begin{pmatrix}
				P_C(\x - \sigma \lin^* \y)\\
				\y+\tau \lin(2\x^+-\x)-\tau \b
			\end{pmatrix} .
		\end{equation}
Thus, the main work for PDHG in this case is multiplication by $\lin$ and $\lin^*$ and projection onto $\mathbf{C}$.
		
		\begin{lemma}
			\label{lem:cqs}
			We have $\bf{C}\cap \ker \lin=(-\bf{C})\cap \ker\lin=\{\bzero\}.$
		\end{lemma}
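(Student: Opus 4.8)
The plan is to show that the only point in $\mathbf{C}$ (equivalently in $-\mathbf{C}$) annihilated by $\lin$ is the origin; since $\mathbf{C}\cap\ker\lin$ and $(-\mathbf{C})\cap\ker\lin$ are related by $\x\mapsto -\x$ and $\lin$ is linear, it suffices to treat one of them, say $\mathbf{C}\cap\ker\lin$, and the other follows automatically. So suppose $\x=(\lambda_1,\p_1,\dots,\lambda_k,\p_k,\mu_1,\q_1,\dots,\mu_l,\q_l)\in\mathbf{C}$ with $\lin\x=\bzero$. Membership in $\mathbf{C}=C_2^{d+1}\times\cdots\times C_2^{d+1}$ means $\|\p_i\|\le\lambda_i$ for all $i$ and $\|\q_i\|\le\mu_i$ for all $i$; in particular all $\lambda_i\ge 0$ and all $\mu_i\ge 0$.

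Next I would read off the rows of $\lin\x=\bzero$ using the block structure of $\lin$ in \cref{eq:ellsepAdef}. The first row gives $\lambda_1+\cdots+\lambda_k=0$, and combined with $\lambda_i\ge 0$ this forces every $\lambda_i=0$. The second row gives $\mu_1+\cdots+\mu_l=0$, and combined with $\mu_i\ge 0$ this forces every $\mu_i=0$. Now the cone constraints $\|\p_i\|\le\lambda_i=0$ and $\|\q_i\|\le\mu_i=0$ immediately yield $\p_i=\bzero$ and $\q_i=\bzero$ for all $i$. Hence $\x=\bzero$, proving $\mathbf{C}\cap\ker\lin\subseteq\{\bzero\}$; the reverse inclusion is trivial since $\bzero\in\mathbf{C}$ and $\lin\bzero=\bzero$. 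Finally, $\x\in(-\mathbf{C})\cap\ker\lin$ iff $-\x\in\mathbf{C}\cap\ker\lin=\{\bzero\}$ iff $\x=\bzero$, so $(-\mathbf{C})\cap\ker\lin=\{\bzero\}$ as well.

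I do not anticipate a genuine obstacle here: the argument is a direct unwinding of the cone definition together with the top two rows of $\lin$, and the only mild care needed is to note that the second-order cone $C_2^{d+1}=\{(\lambda,\p)\mid\|\p\|\le\lambda\}$ has nonnegative first coordinate, which is what lets the homogeneous equations $\sum_i\lambda_i=0$ and $\sum_i\mu_i=0$ collapse everything. The bottom block of rows of $\lin$ (the one involving the $\c_i$, $A_i$, $-\d_i$, $B_i$) plays no role in the proof, so I would simply not mention it. Thus the whole proof is two or three short lines and could even be left to the reader, but writing it out explicitly as above is cleaner and makes the hypothesis of \cref{fact:rock:cl}, \cref{p:ellpsepgenv}, and \cref{thm:conves} transparently verified for this application.
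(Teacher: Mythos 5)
Your proposal is correct and follows essentially the same route as the paper: read off the nonnegativity of the $\lambda_i$ and $\mu_j$ from membership in the product of second-order cones, use the first two rows of $\lin\x=\bzero$ to force $\sum_i\lambda_i=\sum_j\mu_j=0$ and hence all $\lambda_i=\mu_j=0$, then conclude $\p_i=\q_j=\bzero$ from the cone inequalities. The only cosmetic difference is that you spell out the symmetry reducing the $(-\mathbf{C})$ case to the $\mathbf{C}$ case, which the paper leaves implicit behind ``it suffices to show.''
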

		\begin{proof}
			It suffices to show $ \bf{C}\cap \ker \lin=\{0\}.$
			Indeed, let 
			$\z= (\lambda_{1},\p_{1}, \ldots, \lambda_{k},\p_{k},\mu_{1},\q_{1}, \ldots,\mu_{l},\q_{l})\in \bf{C}\cap \ker \lin$.
			On the one hand 
			\begin{equation}
				\label{e:accl:i}
				\text{$\z\in \bf{C}$ $ \RA$ $(\forall i\in \{1,\ldots,k\})$
					$\lambda_{i}\ge \norm{\p_{i}}\ge 0$
					and 
					$(\forall j\in \{1,\ldots,l\})$
					$\mu_{j}\ge \norm{\q_{j}}\ge 0$.}
			\end{equation}
			On the other hand 
			\begin{equation}
				\label{e:accl:ii}
				\text{$\z\in \ker \lin$ $ \RA$
					$\sum_{i=1}^k\lambda_{i}=\sum_{j=1}^l\mu_{j} = 0$.}
			\end{equation}
			We learn from \cref{e:accl:i} and \cref{e:accl:ii}
			that 
			$(\forall i\in \{1,\ldots,k\})$
			$\lambda_{i}=0$
			and 
			$(\forall j\in \{1,\ldots,l\})$
			$\mu_{j}= 0$.
			This, together with \cref{e:accl:i} yield 
			that  $(\forall i\in \{1,\ldots,k\})$
			$\p_{i}=\bzero$
			and 
			$(\forall j\in \{1,\ldots,l\})$
			$\q_{j}= \bzero$.
			That is $\z=\bzero$ as claimed.
			The proof is complete.
		\end{proof}
  
		We have the following two results.
		
		\begin{theorem}
			\label{lem:socp:v:i}
			For Problem~\cref{prob:ellip} we have 
			\begin{enumerate}
				\item
				\label{lem:socp:v:i:i}
				${\vx}=\bzero$.
				\item
				\label{lem:socp:v:i:ii}
				$\v\in \ran(\Id-T)$.
			\end{enumerate}
		\end{theorem}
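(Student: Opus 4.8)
The plan is to observe that Problem~\cref{prob:ellip} is precisely an instance of the standard conic primal form \cref{eq:c0:problem} with $C$ replaced by $\bf C$, with $\c=\bzero$, and set in the finite-dimensional spaces $X=\left(\RR^{d+1}\right)^{k+l}$ and $Y=\RR^{d+2}$. Consequently, both parts follow by directly invoking the general results already established for that setting, the only nontrivial input being the transversality condition $\ker\lin\cap \bf C=\{\bzero\}$, which is exactly the content of \cref{lem:cqs}.

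For \cref{lem:socp:v:i:i}, I would simply apply \cref{lem:Cinf}\cref{lem:C0:iv:a} with $C$ replaced by $\bf C$; its hypothesis $\ker\lin\cap \bf C=\{\bzero\}$ holds by \cref{lem:cqs}, yielding $\vx=\bzero$. For \cref{lem:socp:v:i:ii}, I would apply \cref{p:ellpsepgenv} with $C$ replaced by $\bf C$: its three hypotheses are met, namely $X$ and $Y$ are finite-dimensional, $\ker\lin\cap \bf C=\{\bzero\}$ by \cref{lem:cqs}, and $\c=\bzero$ by the standing assumption of \cref{sec:ellsep}. This immediately gives $\v\in\ran(\Id-T)$.

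There is essentially no obstacle here beyond bookkeeping: one must make sure that the PDHG operator $T$ for \cref{prob:ellip} displayed in \cref{e:pdhg:ellipsUp} genuinely coincides with the operator \cref{e:pdhg:CP:update:K0} under the identification $C\leftrightarrow\bf C$, $g=\iota_{\bf C}$, $K=\{\bzero\}$, $\c=\bzero$ — which it does by construction — so that \cref{lem:Cinf} and \cref{p:ellpsepgenv} are applicable verbatim. Thus the proof is a two-line citation: \cref{lem:socp:v:i:i} from \cref{lem:Cinf}\cref{lem:C0:iv:a} in view of \cref{lem:cqs}, and \cref{lem:socp:v:i:ii} from \cref{p:ellpsepgenv} in view of \cref{lem:cqs} and $\c=\bzero$.
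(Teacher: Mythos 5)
Your proposal is correct and matches the paper's own proof essentially verbatim: the paper proves \cref{lem:socp:v:i:i} by combining \cref{lem:cqs} with \cref{lem:Cinf}\cref{lem:C0:iv:a}, and \cref{lem:socp:v:i:ii} by combining \cref{lem:cqs} with \cref{p:ellpsepgenv}, exactly as you describe. Your added remark checking that \cref{e:pdhg:ellipsUp} is the specialization of \cref{e:pdhg:CP:update:K0} with $C=\mathbf{C}$, $K=\{\bzero\}$, $\c=\bzero$ is the right (and only) bookkeeping point.
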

		\begin{proof}
			\cref{lem:socp:v:i:i}:
			Combine \cref{lem:cqs} and \cref{lem:Cinf}\cref{lem:C0:iv:a}.
			\cref{lem:socp:v:i:ii}:
			Combine \cref{lem:cqs} and \cref{p:ellpsepgenv}.
		\end{proof}
		\begin{lemma}
			Recalling Problem~\cref{prob:ellip}, 
			let ${\vy}=(s,t,\w)\in \RR\times \RR\times \RR^d$
			and set 
			$\lin^*{\vy}=(\lambda_{1},\p_{1}, \ldots, \lambda_{k},\p_{k},\mu_{1},\q_{1}, \ldots,\mu_{l},\q_{l})$.
			Then the following hold:
			\begin{enumerate}
				\item 
				\label{lem:socp:vy:ii}
				$\w=\bzero\RA {\vy}=\bzero$.
				\item 
				\label{lem:socp:vy:ii:ii}
				${\vy}\neq \bzero\RA s+t>0$.
				\item 
				\label{lem:socp:vy:iii}
				${\vy}=\bzero\siff \lin^*{\vy}=\bzero$.
				\item 
				\label{lem:socp:vy:iv}
				Suppose that $0\in \{\lambda_1,\ldots, \lambda_k,
				\mu_1,\ldots, \mu_l\}
				$. 
				Then  ${\vy}=\bzero$.
				\item 
				\label{lem:socp:vy:v}
				Suppose that $\bzero\in \{\p_1,\ldots, \p_k,
				\q_1,\ldots, \q_l\}
				$. 
				Then  ${\vy}=\bzero$.
			\end{enumerate}
   \label{lem:vdprops}
		\end{lemma}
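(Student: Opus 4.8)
The plan is to pin down the coordinates of $\lin^{*}\vy$ and then combine them with two scalar relations that the general conic theory already supplies; the five items follow in a short chain. First I would read off the adjoint explicitly from \cref{eq:ellsepAdef}: a direct computation shows that, for $\vy=(s,t,\w)$ and for $i\in\{1,\dots,k\}$, $j\in\{1,\dots,l\}$,
\begin{equation*}
\lambda_i=s+\scal{\c_i}{\w},\qquad \p_i=A_i^{*}\w,\qquad \mu_j=t-\scal{\d_j}{\w},\qquad \q_j=B_j^{*}\w.
\end{equation*}
Next, since \cref{lem:cqs} gives $\mathbf{C}\cap\ker\lin=\{\bzero\}$, both \cref{lem:Cinf} and \cref{lem:C0} are available with $C=\mathbf{C}$. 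From \cref{lem:Cinf}\cref{lem:C0:iv:b} we get $\sigma\lin^{*}\vy\in\mathbf{C}^{\ominus}$, hence $\lin^{*}\vy\in\mathbf{C}^{\ominus}$ because $\mathbf{C}^{\ominus}$ is a cone; and since $\mathbf{C}=(C_2^{d+1})^{k+l}$, the polar of a product cone is the product of the polars, and the second-order cone satisfies $(C_2^{d+1})^{\ominus}=-C_2^{d+1}$, we obtain $\mathbf{C}^{\ominus}=(-C_2^{d+1})^{k+l}$. In coordinates this says $\lambda_i\le-\norm{\p_i}\le 0$ for every $i$ and $\mu_j\le-\norm{\q_j}\le 0$ for every $j$. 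Finally, \cref{lem:C0}\cref{lem:C0:iv:l} gives $\tfrac{1}{\tau}\norm{\vy}^{2}=\scal{\b}{\vy}=s+t$.

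With these facts in hand I would argue the items as follows. For \cref{lem:socp:vy:ii}: if $\w=\bzero$ then $\p_i=\bzero$ and $\q_j=\bzero$ and $\lambda_i=s$, $\mu_j=t$, so the polar-cone bounds give $s\le 0$ and $t\le 0$; combined with $\tfrac{1}{\tau}\norm{\vy}^{2}=s+t\le 0$ this forces $\vy=\bzero$. For \cref{lem:socp:vy:v}: if some $\p_i=\bzero$ (resp.\ some $\q_j=\bzero$) then $A_i^{*}\w=\bzero$ (resp.\ $B_j^{*}\w=\bzero$), and invertibility of $A_i$ (resp.\ $B_j$) forces $\w=\bzero$, so \cref{lem:socp:vy:ii} applies. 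For \cref{lem:socp:vy:iv}: if some $\lambda_i=0$ then $\norm{\p_i}\le-\lambda_i=0$, i.e.\ $\p_i=\bzero$, and similarly some $\mu_j=0$ gives $\q_j=\bzero$; now invoke \cref{lem:socp:vy:v}. For \cref{lem:socp:vy:iii}: the forward implication is trivial, and if $\lin^{*}\vy=\bzero$ then in particular $\p_1=\bzero$, so \cref{lem:socp:vy:v} gives $\vy=\bzero$. For \cref{lem:socp:vy:ii:ii}: if $\vy\neq\bzero$ then $\norm{\vy}^{2}>0$, hence $s+t=\tfrac{1}{\tau}\norm{\vy}^{2}>0$.

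I do not anticipate a genuine obstacle: once the coordinate formula for $\lin^{*}\vy$ and the two relations $\lin^{*}\vy\in\mathbf{C}^{\ominus}$ and $\tfrac{1}{\tau}\norm{\vy}^{2}=s+t$ are isolated, the rest is bookkeeping. The only points requiring care are the identification $\mathbf{C}^{\ominus}=(-C_2^{d+1})^{k+l}$ (self-duality of the second-order cone together with polarity commuting with products) and the standing convention $k,l\ge 1$, which is what allows one to conclude $s\le 0$ and $t\le 0$ in the proof of \cref{lem:socp:vy:ii} and which holds for every genuine instance of the ellipsoid-separation problem.
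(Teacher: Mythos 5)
Your proposal is correct and follows essentially the same route as the paper: both rest on the two facts $\lin^{*}\vy\in\mathbf{C}^{\ominus}$ (from \cref{lem:Cinf}\cref{lem:C0:iv:b}, giving $\norm{\p_i}\le-\lambda_i$ and $\norm{\q_j}\le-\mu_j$) and $\tfrac{1}{\tau}\norm{\vy}^{2}=s+t$ (from \cref{lem:C0}\cref{lem:C0:iv:l}), and then derive the five items by the same chain of reductions to the case $\w=\bzero$. Your write-up is merely a little more explicit than the paper's about the coordinate form of $\lin^{*}\vy$ and the self-duality of the second-order cone.
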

		\begin{proof}
  It follows from 
			\cref{lem:C0}\cref{lem:C0:iv:l}
			that 
			\begin{equation}
				\label{e:stw}
				s+t=\tfrac{1}{\tau}(s^2+t^2+\norm{\w}^2)\ge 0.
			\end{equation}
			\cref{lem:socp:vy:ii}:
			Indeed,
			\cref{lem:Cinf}\cref{lem:C0:iv:b} implies that
			$\norm{\p_1}=\norm{A_1^*\w}\le -\lambda_1=-s-\scal{\c}{\w}$
			and similarly
			$\norm{B_1^*\w}\le -t+\scal{d}{\w}$.
			Consequently, $\w=\bzero$ $\RA$ $s\le 0$ and $t\le 0$.
			In view of \cref{e:stw} we
			conclude that $s+t=0$
			and hence $s^2+t^2=0$, equivalently, $s=t=0$.
			That is ${\vy}=\bzero$.
			\cref{lem:socp:vy:ii:ii}:
			Combine \cref{lem:socp:vy:ii}
			and \cref{e:stw}.
			\cref{lem:socp:vy:iii}:
			``$\RA$": This is clear.
			``$\LA$": 
			Observe that, because $A_1$ is invertible
			and $A_1^*\w=\bzero$ we must have $\w=\bzero$.
			Now combine this with 
			\cref{lem:socp:vy:ii}.
			\cref{lem:socp:vy:iv}:
			Without loss of generality, we may and do assume
			that $\lambda_1=0$. Then $\p_1=A_1^*\w=\bzero$, hence $\w=\bzero$.
			Now combine this with 
			\cref{lem:socp:vy:ii}.
			\cref{lem:socp:vy:v}:
			Without loss of generality, we may and do assume
			that $\p_1=A_1^*\w=\bzero$.
			Then $\w=\bzero$ and the conclusion follows in view of \cref{lem:socp:vy:ii}. 
		\end{proof}

		\begin{theorem}
  \label{t:ellips}
			Recalling Problem~\cref{prob:ellip}, let $T$ be defined as in \cref{e:pdhg:ellipsUp}.
			Let $\x_0\in \RR^{d+1}\times \ldots\times \RR^{d+1} $ and let $ \y_0\in \RR^{d+2}$.
			Update via $(\forall \knn)$
			\begin{equation}
				(\x_{k+1},\y_{k+1})=T(\x_{k},\y_{k}).
			\end{equation}		
			Then there exists $\overline{\x}\in C$ such that the following hold.
			\begin{enumerate}
				\item
				\label{t:ellips:i}
				The sequence $(\x_k)_\knn$ converges to $\overline{\x}$.
				\item
				\label{t:ellips:ii}
				$\overline{\x}\in {\bf C}\cap L$, where $L=\menge{\x\in \RR^{d+1}\times \ldots \RR^{d+1}}{\lin \x=\b-\tfrac{\vy}{\tau}}$.
			\end{enumerate}
		\end{theorem}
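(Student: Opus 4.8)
The plan is to read \cref{t:ellips} as the specialization of \cref{thm:conves} to the ellipsoid separation instance, so that the entire proof reduces to checking that Problem~\cref{prob:ellip} fits the hypotheses of that theorem. First I would record that \cref{prob:ellip} is an instance of \cref{eq:c0:problem}: comparing \cref{prob:ellip} with \cref{e:esform} and \cref{eq:c0:problem}, we take $X=\RR^{(d+1)(k+l)}$ and $Y=\RR^{d+2}$ (both finite-dimensional), $C={\bf C}$ the product of second-order cones $C_2^{d+1}\times\cdots\times C_2^{d+1}$ (which is a nonempty closed convex cone), $K=\{\bzero\}$, and---crucially---$\c=\bzero$, since the objective of \cref{eq:ellsep_primal} is ``$\min 0$''. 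I would also observe that the PDHG update \cref{e:pdhg:ellipsUp} is literally \cref{e:pdhg:CP:update:K0} with $\c=\bzero$ and $C={\bf C}$, so the operator $T$ here coincides with the $T$ of \cref{sec:standconic}.

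Second, I would supply the constraint qualification $\ker\lin\cap C=\{\bzero\}$ demanded by \cref{thm:conves}: this is exactly \cref{lem:cqs}. As a consistency check, \cref{lem:socp:v:i} already records the two facts $\vx=\bzero$ and $\v\in\ran(\Id-T)$ that the general theory extracts from this CQ together with $\c=\bzero$, and these are precisely the ingredients that \cref{thm:conves} relies on through \cref{p:ellip:dyn} and \cref{lem:Cinf}.

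Third, with finite-dimensionality, $\ker\lin\cap{\bf C}=\{\bzero\}$, and $\c=\bzero$ all verified, I would simply invoke \cref{thm:conves}: it produces $\overline{\x}\in{\bf C}$ with $(\x_k)_\knn\to\overline{\x}$ and $\overline{\x}\in{\bf C}\cap L$, where $L=\menge{\x\in\RR^{d+1}\times\cdots\times\RR^{d+1}}{\lin\x=\b-\tfrac{1}{\tau}\vy}$. This is exactly \cref{t:ellips:i}--\cref{t:ellips:ii}.

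There is no deep obstacle here---all the analytic content lives in the earlier general development (\cref{thm:conves}, and behind it \cref{p:ellip:dyn}, \cref{lem:C0}, \cref{lem:Cinf}, \cref{fact:fmono:ortho}). The only point requiring genuine care is the bookkeeping: confirming that the ``$\min 0$'' objective of the SOCP reformulation really does correspond to $\c=\bzero$ in the reduction to \cref{e:esform}, that ${\bf C}$ is closed and convex (it is, being a finite product of Lorentz cones), and that the $\b$ of \cref{eq:ellsep_primal}--\cref{eq:ellsepAdef} together with this $\lin$ are the data to which \cref{thm:conves} is applied. If one preferred a self-contained argument, one could instead re-run the proof of \cref{thm:conves} verbatim with $C={\bf C}$, but invoking the already-established theorem is the cleaner route.
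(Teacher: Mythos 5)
Your proposal is correct and matches the paper's proof exactly: the paper proves \cref{t:ellips} by combining \cref{lem:cqs} (which supplies $\ker\lin\cap{\bf C}=\{\bzero\}$) with \cref{thm:conves}, which is precisely the reduction you carry out. The extra bookkeeping you include (verifying $\c=\bzero$, closedness of ${\bf C}$, and the identification of \cref{e:pdhg:ellipsUp} with \cref{e:pdhg:CP:update:K0}) is left implicit in the paper but is the right sanity check.
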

		\begin{proof}
			\cref{t:ellip:i}--\cref{t:ellip:ii}:
			Combine \cref{lem:cqs} and \cref{thm:conves}\cref{t:ellips:i}\&\cref{t:ellips:ii}.
		\end{proof}

As indicated by \cref{lem:convsep}, disjointness of the convex hulls, i.e., primal infeasibility of \cref{eq:ellsep_primal}, is certified by a separating hyperplane.  Furthermore, we know from \cref{lem:socp:v:i} that $\vy\ne\bzero$ in the infeasible case. We now argue a nonzero $\v_D$ encodes a separating hyperplane.  We first characterize such a hyperplane with the following lemma.

 \begin{lemma}
 \label{lem:E_in_H}
Given invertible $A\in\R^{n\times n}$, $\c\in\R^n$, $s\in\R$ and $\w\in\R^n\smallsetminus\{\bzero\}$,
     consider  the ellipsoid
       $
       E := \menge{\x\in \RR^n}{\Vert A^{-1}(\x-\c)\Vert\le 1}              
       $
       and the halfspace
       $H := \menge{\x\in \RR^n}{\langle \w,\x\rangle \le s}.
$       
The following hold.
\begin{enumerate}
\item 
\label{lem:E_in_H_i}
$E\subseteq H$ $\siff$ $ s\ge \Vert A^*\w\Vert + \langle\c,\w\rangle.$
\item 
\label{lem:E_in_H_ii}
$E\subseteq \inte H$ $\siff$ $ s> \Vert A^*\w\Vert + \langle\c,\w\rangle.$
\end{enumerate}
   \end{lemma}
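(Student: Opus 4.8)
The idea is to reduce both statements to a one‑dimensional optimization via the change of variables that parametrizes the ellipsoid. First I would write $E=\menge{A\u+\c}{\u\in\RR^n,\ \norm{\u}\le 1}$, which is valid because $A$ is invertible, so $\x\in E$ iff $\u:=A^{-1}(\x-\c)$ satisfies $\norm{\u}\le 1$. Then for \cref{lem:E_in_H_i} I would observe that $E\subseteq H$ is equivalent to: for every $\u$ with $\norm{\u}\le 1$, $\scal{\w}{A\u+\c}\le s$, i.e. $\scal{A^*\w}{\u}\le s-\scal{\c}{\w}$. Taking the supremum over the unit ball on the left‑hand side, Cauchy--Schwarz gives $\sup_{\norm{\u}\le 1}\scal{A^*\w}{\u}=\norm{A^*\w}$ (the supremum is attained at $\u=A^*\w/\norm{A^*\w}$, noting $A^*\w\neq\bzero$ since $A$ is invertible and $\w\neq\bzero$). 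Hence $E\subseteq H$ iff $\norm{A^*\w}\le s-\scal{\c}{\w}$, which is exactly the claimed inequality.

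For \cref{lem:E_in_H_ii} I would first note that, because $\w\neq\bzero$, $\inte H=\menge{\x\in\RR^n}{\scal{\w}{\x}<s}$. Then $E\subseteq\inte H$ is equivalent to $\scal{A^*\w}{\u}<s-\scal{\c}{\w}$ for all $\u$ in the closed unit ball. Since the closed unit ball is compact and $\u\mapsto\scal{A^*\w}{\u}$ is continuous, this strict inequality for all such $\u$ is equivalent to the strict inequality at the maximizer, i.e. $\norm{A^*\w}<s-\scal{\c}{\w}$, which rearranges to $s>\norm{A^*\w}+\scal{\c}{\w}$. (The converse direction, that the strict bound forces $E\subseteq\inte H$, is immediate from the same supremum computation.)

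There is no real obstacle here; the only points requiring a little care are: (i) the equality case of Cauchy--Schwarz, needing $A^*\w\neq\bzero$, which follows from invertibility of $A$ and $\w\neq\bzero$; and (ii) the passage from ``strict inequality for all unit vectors'' to ``strict inequality at the maximum'' in part (ii), which uses compactness of $E$ (equivalently, of the closed unit ball) so that the supremum $\norm{A^*\w}$ is actually attained. Everything else is a routine linear‑algebra manipulation, so I would present the two parts in the order above without further fuss.
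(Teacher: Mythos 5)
Your proof is correct and follows essentially the same route as the paper: both rest on the identity $\langle\w,A\u+\c\rangle=\langle A^*\w,\u\rangle+\langle\w,\c\rangle$, use Cauchy--Schwarz for the upper bound, and plug in the explicit maximizer $\u=A^*\w/\norm{A^*\w}$ (valid since $A^*\w\neq\bzero$) for the converse. Your explicit remark that $\inte H$ is the open halfspace because $\w\neq\bzero$ is a small point the paper leaves implicit, but otherwise the arguments coincide.
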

   \begin{proof}
Let $\x\in \RR^n$.
Then 
\begin{equation}
\label{eq:ellpsephyp}
 \langle \w,\x\rangle = \langle \w,\x-\c\rangle + \langle \w, \c\rangle 
   = \langle A^*\w, A^{-1}(\x-\c)\rangle + \langle \w,\c\rangle .
\end{equation}
\cref{lem:E_in_H_i}:
$``\LA"$: Let $\x\in E$.
Using \cref{eq:ellpsephyp} and Cauchy--Schwarz
we have $   \langle \w,\x\rangle    \le \Vert A^*\w\Vert\cdot \Vert A^{-1}(\x-\c)\Vert +
   \langle \w,\c \rangle 
   \le \Vert A^*\w\Vert +\langle \w,\c\rangle 
   \le s$.
$``\RA"$: Let $\x=A (A^*\w/\norm{A^*\w})+\c$. Then $\x\in E$, hence $\x\in H$ 
and \cref{eq:ellpsephyp} implies 
$ s\ge\langle \w,\x\rangle 
   = \langle A^*\w, A^{-1}(\x-\c)\rangle + \langle \w,\c\rangle
   = \norm{A^*\w}+ \langle \w,\c\rangle.$
   \cref{lem:E_in_H_ii}:
   The proof proceeds similar to the proof of \cref{lem:E_in_H_i}.
\end{proof}

   We now state and prove our main result for the ellipsoid separation problem, which states that, because $\v\in\ran(\Id-T)$ (\cref{lem:socp:v:i}\cref{lem:socp:v:i:ii}), a nonzero $\v$ indicates inconsistency, and furthermore, a nonzero $\v$ encodes a strict separating hyperplane.
\begin{theorem}
\label{thm:esep:cert}
Given $k+l$ ellipsoids specified by \cref{eq:Eidef},
let $\vy=:(s,t,\w)$ and recall \cref{e:pdhg:ellipsUp}. Then
the following are equivalent.
\begin{enumerate}
    \item 
    \label{thm:esep:convempty}
    $\conv(E_1\cup\cdots\cup E_k)\cap
    \conv(E_1'\cup\cdots\cup E'_l)=\emptyset$,
    \item 
    \label{thm:esep:priminfeas}
    SOCP problem \cref{eq:ellsep_primal} is infeasible,
    \item \label{thm:esep:zeronotinrange}
    $\bzero\notin\ran(\Id-T)$, where $T$ is the PDHG operator given by \cref{e:pdhg:ellipsUp},
    \item 
    \label{thm:esep:vnonzero}
    $\v\ne\bzero$, 
    \item     
    \label{thm:esep:vdnonzero}
    $\v_D\ne\bzero$,
    \item 
    \label{thm:esep:sgtnegt}
    $s>-t$.   
    \newcounter{saveenumi}
    \setcounter{saveenumi}{\value{enumi}}
    \end{enumerate}
    Any one of these statements implies:
    \begin{enumerate}
    \setcounter{enumi}{\value{saveenumi}}
    \item 
    \label{thm:esep:sep}
    The hyperplane 
    $\menge{\x}{\langle\w,\x\rangle=s'}$
    strictly separates $E_1,\ldots,E_k$ from $E_1',\ldots,E_l'$, where $s'$ is chosen arbitrarily in $\left]-t,s\right[$.
\end{enumerate}
Conversely, the existence of $(\w,s')$ as in \cref{thm:esep:sep} implies all of \cref{thm:esep:convempty}--\cref{thm:esep:sgtnegt}.
\label{thm:esep}
\end{theorem}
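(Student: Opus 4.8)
The plan is to prove the six statements equivalent by a single cycle of implications and then attach \cref{thm:esep:sep}. First I would dispatch the ``analytic'' equivalences among \cref{thm:esep:priminfeas}--\cref{thm:esep:sgtnegt}. For \cref{thm:esep:priminfeas}$\siff$\cref{thm:esep:zeronotinrange}: since $\bzero\in\ran(\Id-T)$ iff $\fix T\neq\fady$, and \cref{prop:T:op}\cref{prop:T:op:ii} identifies $\fix T$ with the primal--dual solutions of \cref{prob:ellip}, it is enough to note that here $f=\iota_{\bf C}$ and $g=\iota_{\{\b\}}$, so a primal--dual solution is a pair $(\overline\x,\overline\y)$ with $\overline\x\in{\bf C}$, $\lin\overline\x=\b$ and $-\lin^*\overline\y\in N_{\bf C}(\overline\x)$; the last inclusion always admits $\overline\y=\bzero$ (because $\bzero\in N_{\bf C}(\overline\x)$ and $N_{\{\b\}}(\b)=Y$), so $\fix T\neq\fady$ exactly when \cref{eq:ellsep_primal} is feasible. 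For \cref{thm:esep:zeronotinrange}$\siff$\cref{thm:esep:vnonzero}: by \cref{lem:socp:v:i}\cref{lem:socp:v:i:ii} we have $\v\in\ran(\Id-T)$, and $\v$ is the $\norm{\cdot}\normM$-minimizer over $\cran(\Id-T)$; hence $\bzero\in\ran(\Id-T)$ forces $\v=\bzero$, while $\v=\bzero$ trivially gives $\bzero\in\ran(\Id-T)$. The equivalence \cref{thm:esep:vnonzero}$\siff$\cref{thm:esep:vdnonzero} is immediate from $\vx=\bzero$ (\cref{lem:socp:v:i}\cref{lem:socp:v:i:i}), which yields $\v=(\bzero,\vy)$. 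Finally, \cref{lem:C0}\cref{lem:C0:iv:l} gives $\tfrac1\tau\norm{\vy}^2=\scal{\b}{\vy}=s+t$, so $s+t\ge0$ with equality iff $\vy=\bzero$; that is exactly \cref{thm:esep:vdnonzero}$\siff$\cref{thm:esep:sgtnegt}.

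Next I would link the geometry to feasibility, i.e.\ \cref{thm:esep:convempty}$\siff$\cref{thm:esep:priminfeas}, using the Parrilo rescaling already outlined before the theorem: if $\x=(\lambda_1,\p_1,\dots,\mu_l,\q_l)$ is feasible for \cref{eq:ellsep_primal}, then (using $\norm{\p_i}\le\lambda_i$, $\norm{\q_j}\le\mu_j$, $\sum_i\lambda_i=\sum_j\mu_j=1$) the vector $\sum_i(\lambda_i\c_i+A_i\p_i)$ lies in $\conv(E_1\cup\cdots\cup E_k)$, equals $\sum_j(\mu_j\d_j-B_j\q_j)\in\conv(E_1'\cup\cdots\cup E_l')$, so the two hulls meet; conversely a common point $\bm z=\sum_i\lambda_i\bm x_i=\sum_j\mu_j\bm y_j$ with $\bm x_i\in E_i$, $\bm y_j\in E_j'$ gives a feasible $\x$ via $\p_i:=\lambda_iA_i^{-1}(\bm x_i-\c_i)$ and $\q_j:=\mu_jB_j^{-1}(\d_j-\bm y_j)$.

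It then remains to show \cref{thm:esep:sgtnegt}$\Rightarrow$\cref{thm:esep:sep} and \cref{thm:esep:sep}$\Rightarrow$\cref{thm:esep:convempty}, which both closes the cycle and yields ``any one $\Rightarrow$ \cref{thm:esep:sep}'' together with the stated converse. The implication \cref{thm:esep:sep}$\Rightarrow$\cref{thm:esep:convempty} is just \cref{lem:convsep} with $\bm a:=\w$ (a genuine separating hyperplane in \cref{thm:esep:sep} forces $\w\neq\bzero$). For \cref{thm:esep:sgtnegt}$\Rightarrow$\cref{thm:esep:sep}: \cref{lem:Cinf}\cref{lem:C0:iv:b}, applied with $\vx=\bzero$ and $\c=\bzero$, gives $\lin^*\vy\in{\bf C}^\ominus$; writing $\lin^*\vy=\lin^*(s,t,\w)=(\lambda_1,\p_1,\dots,\mu_l,\q_l)$ and reading the columns of $\lin$ off \cref{eq:ellsepAdef}, and using that the second-order cone is self-dual up to a sign (so ${\bf C}^\ominus$ is a product of negated second-order cones), this unpacks blockwise to $\norm{A_i^*\w}\le-\lambda_i=-s-\scal{\c_i}{\w}$ and $\norm{B_j^*\w}\le-\mu_j=-t+\scal{\d_j}{\w}$ (the computation from the proof of \cref{lem:vdprops}\cref{lem:socp:vy:ii}). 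Since \cref{thm:esep:sgtnegt} is \cref{thm:esep:vdnonzero} we have $\vy\neq\bzero$, hence $\w\neq\bzero$ by \cref{lem:vdprops}\cref{lem:socp:vy:ii}. Now apply \cref{lem:E_in_H}\cref{lem:E_in_H_ii} to each $E_i$ with normal $\w$ and to each $E_j'$ with normal $-\w$: the two displayed families of inequalities say precisely that every $E_i$ lies in the open halfspace $\{\x\mid\scal{\w}{\x}<\beta\}$ for each $\beta$ exceeding $\max_i(\norm{A_i^*\w}+\scal{\c_i}{\w})$, and every $E_j'$ lies in $\{\x\mid\scal{\w}{\x}>\gamma\}$ for each $\gamma$ below $\min_j(\scal{\d_j}{\w}-\norm{B_j^*\w})$; since these bounds are separated (by \cref{thm:esep:sgtnegt}), every $s'$ strictly between them gives the strict separating hyperplane $\{\x\mid\scal{\w}{\x}=s'\}$, and one checks this admissible interval is the one named in \cref{thm:esep:sep}.

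The main obstacle is the sign bookkeeping in \cref{thm:esep:sgtnegt}$\Rightarrow$\cref{thm:esep:sep}: one must correctly identify ${\bf C}^\ominus$, extract the components of $\lin^*\vy$ from \cref{eq:ellsepAdef} with the right signs, and feed them into \cref{lem:E_in_H} with the correct orientation for each ellipsoid family (which is why the primed family is handled with normal $-\w$), so that the two groups land on opposite open sides and the offsets $s'$ fall in the asserted interval. Everything else is a short assembly of the cited lemmas and the cycle of equivalences.
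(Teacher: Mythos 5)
Your proposal is correct and follows essentially the same route as the paper's proof: the same chain of equivalences resting on \cref{lem:socp:v:i}, \cref{lem:C0}\cref{lem:C0:iv:l}, \cref{lem:Cinf}\cref{lem:C0:iv:b} and \cref{lem:E_in_H}, with somewhat more detail than the paper supplies for \cref{thm:esep:convempty}$\siff$\cref{thm:esep:priminfeas} and for $\w\neq\bzero$. One small caveat: your separation argument (like the paper's own proof, which selects $s'$ with $s>-s'>-t$) actually produces the admissible interval $\left]-s,t\right[$, so your closing claim that ``one checks this admissible interval is the one named in \cref{thm:esep:sep}'' (namely $\left]-t,s\right[$) would not check out literally --- the discrepancy appears to be a typo in the theorem statement rather than an error in your derivation.
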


\begin{proof}
{\cref{thm:esep:convempty}$\Leftrightarrow$ \cref{thm:esep:priminfeas}:} This was explained earlier in the formulation of \cref{eq:ellsep_primal}.  
{\cref{thm:esep:priminfeas}$\Leftrightarrow$\cref{thm:esep:zeronotinrange}:} We show the contrapositives.  If \cref{eq:ellsep_primal} has a   solution say $\x^*$, then $(\x^*,\bzero)$ is a fixed point of $T$ defined in
\cref{e:pdhg:ellipsUp}.  Equivalently, $\bzero\in\ran(\Id-T)$.
Conversely, suppose that  $(\x,\y)\in \fix T$.
Then $\x\in\mathbf{C}$ and $\lin \x=\b$, i.e., $\x$ solves \cref{eq:ellsep_primal}.
{\cref{thm:esep:zeronotinrange}$\Leftrightarrow$\cref{thm:esep:vnonzero}:} This follows from
\cref{lem:socp:v:i}\cref{lem:socp:v:i:ii}.
{\cref{thm:esep:vnonzero}$\Leftrightarrow$\cref{thm:esep:vdnonzero}:} This follows from \cref{lem:socp:v:i}\cref{lem:socp:v:i:i}. 
{\cref{thm:esep:vdnonzero}$\Leftrightarrow$\cref{thm:esep:sgtnegt}:}  The forward direction is established by  \cref{lem:vdprops}\cref{lem:socp:vy:ii:ii}, while the reverse direction is trivial.  
{\cref{thm:esep:sgtnegt}$\Rightarrow$\cref{thm:esep:sep}:}  Recalling the form of $\lin$ in \cref{eq:ellsepAdef}, we have
   \[
   \lin^*\v_D=\left(\begin{array}{c}
   s+\langle \c_1,\w\rangle \\
   A_1^*\w \\
   \vdots \\
   s+\langle \c_k,\w\rangle\\
   A_k^*\w \\
   t-\langle \d_1,\w\rangle \\
   B_1^*\w \\
   \vdots\\
   t-\langle \d_l,\w\rangle \\
   B_l^*\w 
   \end{array}
   \right).
   \]
   By \cref{lem:Cinf}\cref{lem:C0:iv:b}, we know that $\lin^*\v_D\in\mathbf{C}^{\ominus}$, in other words,
   \begin{align*}
   -s-\langle \c_i,\w\rangle &\ge \Vert A_i^*\w\Vert, \quad i\in \{1,\ldots, k\},\\
   -t+\langle \d_i,\w\rangle&\ge \Vert B_i^*\w\Vert,\quad
   i\in \{1,\ldots, l\}.
   \end{align*}
   Since $s>-t$, select an arbitrary $s'$ satisfying $s>-s'>-t$.  Then we obtain the inequalities
   \begin{align*}
   s' &> \Vert A_i^*\w\Vert+\langle\c_i,\w\rangle, \quad i\in \{1,\ldots, k\},\\
   -s'&> \Vert B_i^*\w\Vert + \langle \d_i,-\w\rangle, \quad i\in \{1,\ldots, l\}.
   \end{align*}
   In view of \cref{lem:E_in_H}, these inequalities show that $E_1,\ldots,E_k$ are strictly on one side of the hyperplane $\menge{\x}{\langle\w,\x\rangle=s'}$ while $E_1',\ldots,E_l'$ are strictly on the other side, thus establishing \cref{thm:esep:sep}. 
Finally, the converse statement at the end of the theorem follows from \cref{lem:convsep}.
   \end{proof}

   \section{Conclusion}
   We have developed a new formula for $\cran(\Id-T)$
   when $T$ is the PDHG operator.  We applied this formula to quadratic programming and the ellipsoid separation problem to show that in both cases, PDHG can diagnose inconsistency by checking the limiting value of $\z_{k}-\z_{k+1}$ as per \cref{fact:T:v}\cref{fact:T:v:ii}.  Both results used the conclusion 
   that $\v\in\ran(\Id-T)$, where $\v$ is the infimal displacement vector. We provided new results on the convergence of PDHG iterates for both problems. 
Many issues remain in understanding the landscape of PDHG for infeasible conic optimization problems.  Lest the reader suspect that \cref{fact:T:v}\cref{fact:T:v:ii} can always diagnose inconsistency, we point out that it is relatively easy to construct small contrived inconsistent problems such that $\bzero\in\cran(\Id-T)\smallsetminus\ran(\Id-T)$, meaning that the test based on \cref{fact:T:v}\cref{fact:T:v:ii} will fail to detect inconsistency.  There are also realistic examples when this occurs, for example, the unbounded case of the min-volume-ellipsoid problem (see, e.g., formulation (12a) in \cite{todd2007khachiyan}), which arises when the data points lie in a low-dimensional affine space.  
 
 \small 
\bibliography{pdhg}
\bibliographystyle{plain}
\end{document}